\crefname{prop}{Prop.}{Prop.}
\crefname{lem}{Lem.}{Lem.}
\crefname{thm}{Thm.}{Thm.}
\crefname{defn}{Def.}{Def.}
\crefname{cor}{Cor.}{Cor.}
\crefname{rem}{Rem.}{Rem.}
\crefname{expl}{Expl.}{Expl.}
\theoremstyle{plain}
\newtheorem{thm}{Theorem}[section]
\newtheorem{lem}[thm]{Lemma}
\newtheorem{prop}[thm]{Proposition}
\newtheorem{cor}[thm]{Corollary}
\theoremstyle{definition}
\newtheorem{defn}[thm]{Definition}
\theoremstyle{remark}
\newtheorem{rem}[thm]{Remark}
\newtheorem{expl}[thm]{Example}
\numberwithin{equation}{section}
\DeclareMathOperator\id{id}
\DeclareMathOperator\im{im}
\DeclareMathOperator\Hom{Hom}
\DeclareMathOperator\End{End}
\DeclareMathOperator\Tr{Tr}
\newcommand{\GL}{\mathsf{GL}}
\newcommand{\Biv}{\mathsf{Biv}}
\newcommand{\FF}{\mathbb{F}}
\newcommand{\CC}{\mathbb{C}}
\newcommand{\RR}{\mathbb{R}}
\newcommand{\ZZ}{\mathbb{Z}}
\newcommand{\U}{\mathcal{U}}
\renewcommand{\O}{\mathcal{O}}
\newcommand{\gfrak}{\mathfrak{g}}
\newcommand{\h}{\mathfrak{h}}
\newcommand{\gl}{\mathfrak{gl}}
\renewcommand{\sl}{\mathfrak{sl}}
\newcommand{\so}{{\mathfrak{so}}}
\newcommand{\grA}{{\overline A}}
\newcommand{\tiH}{{\tilde H}}
\newcommand{\tih}{{\tilde{h}}}
\newcommand{\tixi}{{\tilde\xi}}
\renewcommand\l\lambda
\newcommand{\tiLambda}{{\Lambda^+_\xi}}
\renewcommand\L\Lambda
\newcommand{\ieven}{^{\textnormal{even}}}
\newcommand\tforall{\qquad\text{for all }}
\newcommand{\hs}{\mathsf{hs}}
\newcommand{\Hcal}{\mathcal{H}}
\newcommand{\inv}{^{-1}}
\newcommand{\tensor}{\otimes}
\renewcommand{\o}{\otimes}
\newcommand{\eps}{\varepsilon}
\newcommand{\DeltaC}{{\Delta_C}}
\newcommand{\AC}{{A\tensor C}}
\newcommand{\tri}[1][\cdot,\cdot]{\langle #1 \rangle}
\newcommand{\xcite}[2][]{\cite[#1]{#2}}
\begin{document}

\title[Hopf--Hecke algebras, infinitesimal Cherednik algebras and Dirac cohomology]{Hopf--Hecke algebras, infinitesimal Cherednik algebras, \\ and Dirac cohomology}

\subjclass[2010]{Primary 16T05; Secondary 20C08}
\keywords{Hopf--Hecke algebras; Barbasch--Sahi algebras; Dirac cohomology; PBW deformations; infinitesimal Cherednik algebras}

\author{Johannes Flake}
\email{flake@art.rwth-aachen.de}
\address{Algebra and Representation Theory, RWTH Aachen University,
Pontdriesch 10--16,
52062 Aachen, Germany}

\author{Siddhartha Sahi}
\email{sahi@math.rutgers.edu}
\address{Department of Mathematics, Rutgers University,
Hill Center -- Busch Campus, 110 Frelinghuysen
Road, Piscataway, NJ 08854--8019, USA}

\begin{abstract} Hopf--Hecke algebras and Barbasch--Sahi algebras were defined by the first named author (2016) in order to provide a general framework for the study of Dirac cohomology. The aim of this paper is to explore new examples of these definitions and to contribute to their classification. Hopf--Hecke algebras are distinguished by an orthogonality condition and a PBW property. The PBW property for algebras such as the ones considered here has been of great interest in the literature and we extend this discussion by further results on the classification of such deformations and by a class of hitherto unexplored examples. We study infinitesimal Cherednik algebras of $\GL_n$ as defined by Etingof, Gan, and Ginzburg in [Transform.~Groups, 2005] as new examples of Hopf--Hecke algebras with a generalized Dirac cohomology. We show that they are in fact Barbasch--Sahi algebras, that is, a version of Vogan's conjecture analogous to the results of Huang and Pand\v{z}i{\'c} in [J.~Amer.~Math.~Soc., 2002] is available for them. We derive an explicit formula for the square of the Dirac operator and use it to study the finite-dimensional irreducible modules. We find that the Dirac cohomology of these modules is non-zero and that it, in fact, determines the modules uniquely.
\end{abstract}

\dedicatory{In fond memory of Bert Kostant: friend, philosopher, and guide.}

\maketitle

\setcounter{tocdepth}{2}
\tableofcontents

\section{Introduction}

The Dirac operator was introduced by Dirac in 1928 (\cite{Di}) in order to formulate a relativistic quantum mechanical equation for the electron. It has played an important role in many areas of physics and in differential geometry, especially in the Atiyah--Singer index theorem (\cite{AtS}) and related developments. An algebraic version of the Dirac operator was first employed by Parthasarathy (\cite{Pa}) to study the discrete series of a real reductive group $G$ (\cite{Vo}). It was subsequently applied to the study of unitary representations in general, perhaps with greatest effect to the classification of unitary highest weight representations (\cite{EHW, Ja}) and unitary representation with non-zero cohomology (\cite{VZ}).

In representation theory one studies the Harish--Chandra category \textsf{HC} of ``admissible'' modules for the pair $(\mathfrak{g},K)$ where $\mathfrak{g}$ is the complexified Lie algebra of $G$ and $K$ is a maximal compact subgroup. The irreducible objects in \textsf{HC} have been classified by Langlands and the key open problem is to identify the subset of unitarizable modules, which are in bijection with irreducible unitary representations of $G$. For any $M$ in \textsf{HC}, the algebraic Dirac operator $D$ acts on $M\otimes S$, where $S$ is a spin representation of $K$. For unitarizable $M$, this action is semisimple and, as shown by Parthasarathy, this leads to an inequality relating the actions of the Casimir operators of $G$ and $K$ acting on $M$ and $\ker(D)$, respectively.

Vogan suggested a far-reaching extension of these ideas to an arbitrary, not necessarily unitarizable, $M$ in \textsf{HC}. He proposed that one should study the Dirac cohomology
\[H^{D}(M):=\ker(D)/\ker(D)\cap
\operatorname{im}(D)
\]
and he conjectured that this space, if non-zero, should determine the full infinitesimal character of $M$ and not just the Casimir action. Vogan's conjecture was proved by Huang--Pand\v{z}i\'c \cite{HP} in the original setting, and these ideas have since been extended considerably to include various classes of Hecke algebras, see, e.g., \cite{BCT,Ci}. 

In \cite{Fl} the first author established a common generalization of these cases. The main results of \cite{Fl} are the definition of an extremely general class $\boldsymbol{H}$ of algebras, termed Hopf--Hecke algebras, for which one has a useful formulation of Dirac cohomology, as well as a precise characterization of a subclass $\boldsymbol{B}$ for which an analog of Vogan's conjecture is true. These latter algebras have been termed Barbasch--Sahi algebras by the first author to acknowledge unpublished contributions of D.~Barbasch and the second author in this direction.

Kostant (\cite{Ko-multiplets,Ko-bbw,Ko-hdm}) considered a cubic version of the Dirac operator for semisimple complex Lie algebras with a suitable reductive Lie subalgebra, generalizing the setting of Vogan and Huang--Pand\v{z}i\'c, where the subalgebra originates from a Cartan decomposition. He used this operator in proving a generalized Bott--Borel--Weil theorem and in studying the appearance of certain multiplets of representations. A version of Vogan's conjecture was proved for the cubic Dirac operator, generalizing the result of Huang--Pand\v{z}i\'c. Note that the cubic Dirac operator and its version of Vogan's conjecture are not a special case of the generalizations in \cite{Fl}.

The purpose of the present work is two-fold.
We take the first steps towards a classification of Hopf--Hecke algebras. We define a subclass $\boldsymbol{S}$ of $\boldsymbol{H}$ consisting of algebras that we refer to as standard, and which are obtained by an explicit construction. The relations between the various classes of algebras can be described by this diagram (see \Cref{rem-hs,rem-hs-diag}):
\newcommand\hsdiag{\[\begin{xy}
\xymatrix{
  \boldsymbol{B}~
    \ar@{^(->}[r]^{\neq}
  & \boldsymbol{H}
  \\
  \boldsymbol{B}\cap\boldsymbol{S}~
    \ar@{^(->}[r]^{\quad\neq}
    \ar@{^(->}[u]
  & \boldsymbol{S}
    \ar@{^(->}[u]
}
\end{xy}\]}
\hsdiag
We also describe a map $\hs$ from $\boldsymbol{H}$ to $\boldsymbol{S}$ which is idempotent, but ``lossy'', i.e.~not surjective onto $\boldsymbol{S}$, even though its image contains for instance all Hopf--Hecke algebras coming from finite groups. 

We also exhibit a new example of a Barbasch--Sahi algebra, thus showing that the class $\boldsymbol{B}$ is strictly larger than just the cases previously studied in the literature. This new example, actually family of examples, consists of infinitesimal Cherednik algebras $\mathcal{H}_{\xi}$, which are deformations of the enveloping algebra $U(\mathfrak{sl}_{n+1})$ parameterized by a polynomial $\xi=\xi(z)$. As members of $\boldsymbol{H}$ and $\boldsymbol{B}$, these deformations are not in the image of $\hs$ in general.

It is an interesting open problem to construct non-standard Hopf--Hecke algebras, or to prove that they do not exist. It is also of considerable interest to classify standard (and non-standard) Barbasch--Sahi algebras. We hope to return to these two problems in the near future.

\medskip

\textbf{Organization of this paper.} We recall from \cite{Fl} that Hopf--Hecke algebras are PBW deformations constructed from a cocommutative Hopf algebra $H$, an orthogonal module $V$ and a deformation map $\kappa$.

The PBW property of Hopf--Hecke algebras and their deformation maps $\kappa$ is explored in \Cref{sec-Hopf--Hecke-alg}. We recall the well-known relation between the PBW property and a general kind of Jacobi identity. We generalize methods developed for Drinfeld Hecke algebras (corresponding to the special case where $H$ is the group algebra of a finite group, see for instance \cite{RS}) to the general Hopf algebra case, and we adapt ideas from the setting of infinitesimal Hecke algebras (corresponding to the special case where $H$ is the universal enveloping algebra of a Lie algebra, see \cite{EGG}). In particular, we define an algebra filtration for any cocommutative Hopf algebra acting orthogonally on a module, which allows us to distinguish orders of the deformation map $\kappa$. We then use the coradical filtration to obtain more concrete information on $\kappa$ for Hopf algebras over $\CC$ (or, more generally, pointed Hopf algebras), and to define the ``standardization map'' $\hs$.

Some of the main results on the Dirac cohomology of Hopf--Hecke algebras and Barbasch--Sahi algebras from \cite{Fl} are recalled in \Cref{sec-Dirac}.

The Dirac cohomology of infinitesimal Cherednik algebras  of $\GL_n$ is studied in \Cref{sec-inf-Cherednik-alg}. We use an integral formula for the deformation map $\kappa$ in this special case to explicitly compute the square of the Dirac element, which will allow us to conclude that the infinitesimal Cherednik algebras of $\GL_n$ are, in fact, Barbasch--Sahi algebras. Finally, we use our formula for the square of the Dirac element to show that all finite-dimensional modules are determined by their Dirac cohomology.

\medskip

\textbf{Acknowledgments.} We would like to thank Apoorva Khare for his valuable feedback on an earlier version of this paper. We are also grateful to the referee for their useful comments, in particular the suggestion that we include an explicit version of the correspondence of central characters for infinitesimal Cherednik algebras (see \Cref{prop-explicit} below).

The research of S.~Sahi was partially supported by a Simons Foundation grant (509766) and an NSF grant (DMS-2001537).


\section{Hopf--Hecke algebras as PBW deformations} \label{sec-Hopf--Hecke-alg}

Let $\FF$ be a field of characteristic $0$. All vector spaces and tensor products are over $\FF$, all modules are finite-dimensional left modules.

We start with a brief review of basic Hopf algebra theory, for more information on this we refer to \cite{Mo}. When working with a coalgebra $C$ we will refer to its counit as $\eps:C\to\FF$ and to its coproduct as $\Delta:C\to C\tensor C$. When working with a Hopf algebra $H$, the same convention applies and the antipode is referred to as $S:H\to H$. For an element $c\in C$, we will use Sweedler's notation $c_{(1)}\tensor c_{(2)}$ for the coproduct $\Delta(c)\in C\tensor C$  which does not necessarily represent a pure tensor, but implies a summation over several pure tensors in general. The $i$-fold coproduct for $c$ is written as $c_{(1)}\tensor\dots\tensor c_{(i+1)}$ in $C^{\tensor (i+1)}$, where the notation is justified by the coassociativity. The Hopf algebra $H$ is an $H$-module itself (in fact, an $H$-module algebra) via the \emph{(left) adjoint action}
\[ h\cdot k := h_{(1)} k S(h_{(2)})
\qquad\text{for } h,k\in H
\ .
\]

\begin{defn} A coalgebra $C$ is called \emph{pointed} if every simple subcoalgebra is one-dimensional. An element $c\in C$ is called \emph{group-like} if $\Delta c=c\tensor c$ and $\eps(c)=1$, and the set of group-like elements is denoted by $G(C)$. If $H$ is a bialgebra, an element $h\in H$ is called \emph{primitive} if $\Delta h=1\tensor h+h\tensor 1$ and $\eps(h)=0$, and the set of primitives is denoted by $P(H)$.
\end{defn}

Basic Hopf algebra theory tells us that for every Hopf algebra $H$, $G(H)$ is a group with multiplication in $H$ and $P(H)$ is a Lie subalgebra of $H$ with the commutator.

\begin{defn} If $H$ is a Hopf algebra and $B$ is an $H$-module algebra, then the \emph{semidirect/smash product} $B\rtimes H$ is the algebra generated by $B$ and $H$ with the additional relation $hb=(h_{(1)} \cdot b) h_{(2)}$ for all $h\in H,b\in B$, that is, $B\rtimes H\simeq B\o H$ as a vector space and
\[ (b\o h) (b'\o h') = b (h_{(1)}\cdot b') \o h_{(2)} h' 
\tforall b,b'\in B,h,h'\in H
\ .
\]
We frequently identify $B$ with the subalgebra $B\o1$ and $H$ with the subalgebra $1\o H$ of $B\rtimes H$.
\end{defn}

By a well-known structure theorem, any cocommutative pointed Hopf algebra $H$ over a field $\FF$ of characteristic $0$ has the form $H=\U(P(H))\rtimes \FF[G(H)]$, where $\U(P(H))$ is the universal enveloping algebra of the Lie algebra of primitive elements and $\FF[G(H)]$ is the group algebra of the group of group-like elements in $H$. This applies, in particular, to any cocommutative Hopf algebra over $\FF=\CC$, because every simple cocommutative coalgebra over an algebraically closed field is one-dimensional.

\subsection{The PBW property and the Jacobi property} \label{subsec-Hopf--Hecke} We review the definition of Hopf--Hecke algebras (\xcite[Def.~3.1]{Fl}) and we will study their structure. To this end we fix a cocommutative Hopf algebra $H$ and a finite-dimensional $H$-module $V$. As $H$ is cocommutative, the tensor algebra $T(V)$ is an $H$-module algebra. The semidirect/smash product $T(V)\rtimes H$ is the algebra generated by $T(V)$ and $H$ and the relation
\[ hv = (h_{(1)} \cdot v) h_{(2)}
\tforall h\in H, v\in V
\ .
\]

\begin{defn} A bilinear form $\tri$ on $V$ is called \emph{$H$-invariant} if $\tri[h_{(1)}\cdot v,h_{(2)}\cdot w]=\eps(h)\tri[v,w]$, or equivalently if $\tri[h\cdot v,w]=\tri[v,Sh\cdot w]$, for all $h\in H$, $v,w\in V$ (\xcite[Lem.~2.3]{Fl}). $V$ is called an \emph{orthogonal} module if it admits a non-degenerate $H$-invariant symmetric bilinear form.
\end{defn}

In \xcite[Sec.~2]{Fl}, a \emph{pin cover} of $H$ with respect to $V$ is constructed for any pointed cocommutative Hopf algebra $H$ over $\FF$ with an orthogonal module $V$ (the special case relevant for this paper will be defined in \Cref{defn-pin-cover-Ug}). Note also that for such Hopf algebras, $V$ is orthogonal if and only if every group-like element acts as an orthogonal operator and every primitive element acts as a skew-symmetric operator.

\begin{defn}\label{defn-kappa} Let $\kappa:V\wedge V\to H$ be an $\FF$-linear map. We denote by $I_\kappa$ the two-sided ideal of $T(V)\rtimes H$ generated by elements of the form $vw-wv-\kappa(v\wedge w)$ for $v,w\in V$. The algebra
\begin{equation}
A=A_{H,V,\kappa}:=(T(V)\rtimes H)/I_\kappa
\end{equation}
is called a \emph{Hopf--Hecke algebra} if $V$ is an orthogonal module and if it satisfies the \emph{PBW property}, that is, if it is a \emph{flat deformation} of $S(V)\rtimes H$.
\end{defn}

In other words, let $\grA$ be the associated graded algebra of $A$ with respect to the filtration of the tensor factor $T(V)$. Now $A$ satisfies the PBW property if the natural surjection from $S(V)\rtimes H$ to $\grA$ is an isomorphism.

\begin{rem} We review \xcite[Rem.~3.2]{Fl}, since it will make the following structure theory more transparent: First, we note that the definition of a Hopf--Hecke algebra is closely related to that of continuous Hecke algebras in \cite{EGG}; if $G$ is a reductive algebraic group and $\gfrak$ its Lie algebra, then the Hopf algebra $H=\U(\gfrak)\rtimes\FF[G]$ can be viewed as a subalgebra of the algebra of algebraic distributions $\O(G)^*$ on $G$. If we replace $H$ with $\O(G)^*$ in the definition above and drop the orthogonality condition on $V$, we have the definition of continuous Hecke algebras in the sense of \cite{EGG}.

Second, we observe that a special case of our definition is the situation of $H$ being the group algebra of a finite group $G$. In this context, the algebras $A_{H,V,\kappa}$ have been studied in \cite{Dr, RS, SW}. If $\FF=\RR$, then every module $V$ is orthogonal, because any positive-definite symmetric bilinear form can be averaged to obtain an invariant positive-definite symmetric bilinear form.
\end{rem}

\begin{defn}\label{defn-Jacobi-property} We say that an $\FF$-linear map $\kappa:V\wedge V\to H$ (as in \Cref{defn-kappa}) is \emph{$H$-equivariant} if $\kappa(h\cdot r)=h\cdot \kappa(r)$ for all $h\in H, r\in V\wedge V$, and we say that $\kappa$ has the \emph{Jacobi property} if the following \emph{Jacobi identity} holds in $A_{H,V,\kappa}$ for all $x,y,z\in V$:
\[ [\kappa(x,y),z]+ [\kappa(y,z),x]+ [\kappa(z,x),y] = 0
\ .
\]
\end{defn}

The following fact is well-known (\cite{BG}, \cite[Thm.~2.4]{EGG}, \cite[Thm.~3.1]{WW}, \cite[Thm.~2.5]{Kh}):

\begin{prop} \label{prop-Jacobi-identity} $A_{H,V,\kappa}$ has the PBW property if and only if $\kappa$ is $H$-equivariant and $\kappa$ has the Jacobi property.
\end{prop}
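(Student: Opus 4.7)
The plan is to apply Bergman's diamond lemma to the presentation of $A_\kappa$. Fix an ordered basis $\{v_i\}$ of $V$ and a basis of $H$. The reduction system consists of the smash product rule $hv \to (h_{(1)}\cdot v)h_{(2)}$, which moves letters of $H$ past letters of $V$, together with the defining relations $v_jv_i \to v_iv_j + \kappa(v_j,v_i)$ for $i<j$, which order the letters in $V$. The normal forms are monomials $v_{i_1}\cdots v_{i_r}h$ with $i_1\le\cdots\le i_r$ and $h$ in the fixed basis of $H$; they span $A_\kappa$ and also form a basis of $S(V)\rtimes H$, so the PBW property is equivalent to their linear independence in $A_\kappa$, which by the diamond lemma is equivalent to the confluence of all overlap ambiguities.

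There are two genuine overlap ambiguities. The first is $hv_jv_i$ with $i<j$: reducing by pushing $h$ all the way to the right first, versus swapping $v_j$ and $v_i$ first, yields two expressions that must coincide in $A_\kappa$. Equating them and using cocommutativity of $H$ together with the antipode identity $h_{(3)}S(h_{(4)}) = \eps(h_{(3)})1$ to cancel the trailing $H$-factor, one isolates the identity $\kappa(h_{(1)}\cdot v, h_{(2)}\cdot w) = h\cdot \kappa(v,w)$, which is $H$-linearity of $\kappa$ under the adjoint action. The second ambiguity is $v_kv_jv_i$ for $i<j<k$: the two reduction paths to ordered form differ by a sum of three commutators of the shape $[\kappa(v,w),u]$, computed in $T(V)\rtimes H$ via the smash product rule, and their vanishing in $V\tensor H$ is the stated Jacobi identity.

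Both implications then follow. For ($\Rightarrow$), if PBW holds, the two reductions of each ambiguity give equal normal forms in $A_\kappa$, and by linear independence the resulting identities already hold in $H$ and $V\tensor H$, yielding $H$-linearity and Jacobi. For ($\Leftarrow$), the same calculations show that $H$-linearity and Jacobi are precisely the conditions making both overlap ambiguities confluent, so PBW follows from the diamond lemma.

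The main technical obstacle is the second ambiguity: the intermediate stages in reducing $v_kv_jv_i$ produce commutators between elements of $\im\kappa \subseteq H$ and the remaining $V$-letter, which must be re-expanded via the smash product rule before the two routes visibly collapse to the cyclic Jacobi expression. Since the result is classical, with the cited references \cite{BG, EGG, WW, Kh} providing detailed verifications in closely related settings, the plan is to structure the proof around the diamond lemma together with the two ambiguity computations and defer the full term-by-term bookkeeping to those sources.
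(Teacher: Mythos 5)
The paper gives no proof of this proposition; it cites the result as well-known from \cite{BG}, \cite{EGG}, \cite{WW}, and \cite{Kh}. Your diamond-lemma plan is a valid combinatorial route and aligns most closely with the inductive verification in \cite[thm.~2.4]{EGG} and \cite[thm.~2.5]{Kh}; the Braverman--Gaitsgory and Walton--Witherspoon citations argue homologically instead, testing PBW through a Koszul-duality criterion on the first two filtered pieces rather than through a reduction system. Both lines isolate $H$-linearity and the Jacobi property as the precise obstruction; yours is more elementary and explicit, theirs packages the verification uniformly for general Koszul base algebras.

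Three caveats will make your argument airtight. First, the diamond lemma requires a compatible well-founded semigroup partial order on words, and one must be exhibited since the equivalence between ambiguity resolution and basis-hood of normal forms depends on it; comparing total $V$-degree, then inversions between $H$-letters and $V$-letters, then $H$-word length, then $V$-lexicographic order, does the job. Second, the overlaps $h_a h_b h_c$ and $h_a h_b v_i$ also occur and must be recorded; they impose no new conditions because $T(V)\rtimes H$ is already a flat smash product, but a complete diamond-lemma argument has to dispose of them. Third, the paper's phrasing ``Jacobi identity in $A_\kappa$'' is, read literally, the associative commutator Jacobi identity $[[x,y],z]+[[y,z],x]+[[z,x],y]=0$ and so holds automatically; the condition must be read, as you tacitly do, as vanishing of the corresponding element of $V\tensor H$ inside $T(V)\rtimes H$, and that is exactly what your $v_kv_jv_i$ overlap computation produces. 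With those in place, your resolution of the two overlaps matches the standard computation: the $hv_jv_i$ ambiguity yields $\kappa(h_{(1)}\cdot v,h_{(2)}\cdot w)h_{(3)}=h\kappa(v,w)$ after sorting (the $\kappa$-corrections from the two routes differ by exactly $\kappa(h_{(1)}\cdot v,h_{(2)}\cdot w)h_{(3)}$ by cocommutativity), and right-multiplying by $Sh_{(4)}$ turns this into $H$-linearity.
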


\newcommand{\actson}{\triangleright}
In order to study the Jacobi property, we introduce some useful notation: for all $h\in H, v\in V$,
\begin{equation}
 h\actson v:=h \cdot v-\eps(h)v
\ .
\end{equation}
Note that the triangle ``$\actson$'' just denotes an $\FF$-linear action of $H$, not an algebra action of $H$ (in contrast to the dot ``$\cdot$'').

\begin{defn} For any $i\geq 0$, we define $K'_i\subset H$ to  be the subspace of those $h\in H$ satisfying
\begin{equation} \label{eq-symplectic-h}
(h_{(1)}\actson v_1) \wedge \dots
  \wedge (h_{(i+1)}\actson v_{i+1})  = 0
\end{equation}
for all $v_1,\dots,v_{i+1}\in V$. Let $K_i:=\Delta\inv(K'_i\tensor H)$.
\end{defn}

\begin{rem} Note that the left-hand side of \eqref{eq-symplectic-h} can be expanded. For instance, for $i=1$, the expansion reads
\[
 h \cdot (v_1\wedge v_2)
 - (h \cdot v_1)\wedge v_2 - v_1\wedge (h \cdot v_2)
 +\eps(h) v_1\wedge v_2
\ ,
\]
where the dot denotes the action of $H$ on $\Lambda(V)$, and similar expansions of the $\actson$-action in terms of the usual action of $H$ on the exterior algebra of $V$ exist for all $i\geq 1$.

Note also that $K'_0=\{h\in H: h \cdot v=\eps(h)v\}$, and that due to cocommutativity, $\Delta\inv(K'_i\tensor H)=\Delta\inv(K'_i\tensor K'_i)$ for all $i\geq 0$.
\end{rem}

\begin{lem} $(K_i)_{i\geq 0}$ is an algebra filtration of $H$.
\end{lem}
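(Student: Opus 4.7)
The plan is to establish the filtration axioms ($1\in K_0$, $K_i\subseteq K_{i+1}$, $K_iK_j\subseteq K_{i+j}$, and exhaustion $\bigcup_i K_i = H$) by systematically analysing the defining condition via the multilinear maps
\[ \lambda_n(h)(v_1,\dots,v_n) := (h_{(1)}\actson v_1)\wedge\dots\wedge (h_{(n)}\actson v_n), \]
so that $K'_n = \ker\lambda_{n+1}$. Then $1\actson v = 0$ gives $1\in K'_0$ and hence $1\in K_0$, while $\wedge^{i+1}V = 0$ for $i\geq \dim V$ forces $K_i = K'_i = H$ in that range, yielding exhaustion.

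The key tool is a splitting identity derived from coassociativity $\Delta^n = (\Delta^k\tensor\Delta^{n-k})\Delta$:
\[ \lambda_n(h)(v_1,\dots,v_n) = \sum_{(h)} \lambda_k(h_{(1)})(v_1,\dots,v_k) \wedge \lambda_{n-k}(h_{(2)})(v_{k+1},\dots,v_n) \]
for $0\leq k\leq n$, with the convention $\lambda_0(h) := \eps(h)$. For $h\in K_i$ the condition $\Delta(h)\in K'_i\tensor H$ makes each Sweedler factor $h_{(1)}$ lie in $K'_i$, so setting $k = i+1$ makes every term vanish and yields $\lambda_n(h) = 0$ for all $n\geq i+1$; thus $K_i \subseteq K'_n$ for every $n\geq i$. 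Coassociativity also gives $\Delta^n(h) \in K'_i\tensor H^{\tensor(n-1)}$, and the $S_n$-symmetry of $\Delta^n(h)$ (cocommutativity) together with the iterated intersection identity $(K'_i\tensor H)\cap(H\tensor K'_i) = K'_i\tensor K'_i$ upgrades this to $\Delta^n(h) \in (K'_i)^{\tensor n}$ for all $n\geq 2$. Reading off the first two factors of $\Delta^3(h) = (\Delta\tensor\id)\Delta(h)$ gives $\Delta(h_{(1)}) \in K'_i\tensor H$, i.e.\ each $h_{(1)}\in K_i$, so $K_i$ is itself a subcoalgebra. Combined with $K_i\subseteq K'_{i+1}$, this gives $\Delta(K_i) \subseteq K_i\tensor K_i \subseteq K'_{i+1}\tensor H$, proving $K_i \subseteq K_{i+1}$.

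For multiplicativity I identify $\lambda_n(h)$ with the action on $v_1\wedge\dots\wedge v_n$ of $\bar\Delta^n(h) := \pi^{\tensor n}\Delta^n(h)\in H^{\tensor n}$, where $\pi = \id - \iota\eps$ projects $H$ onto the augmentation ideal. The decomposition $\id = \pi + \iota\eps$ yields
\[ \Delta^n(a) = \sum_{S\subseteq[n]} \iota_{S^c}\bigl(\bar\Delta^{|S|}(a)\bigr), \]
where $\iota_{S^c}$ inserts $1$'s outside $S$. Multiplying position-wise and applying $\pi^{\tensor n}$, positions outside $S\cup T$ contain $1\cdot 1$ which is killed by $\pi$, so
\[ \bar\Delta^n(ab) = \sum_{\substack{S,T\subseteq[n]\\ S\cup T = [n]}} \iota_{S^c}(\bar\Delta^{|S|}(a))\cdot\iota_{T^c}(\bar\Delta^{|T|}(b)). \]
Acting on $v_1\wedge\dots\wedge v_n$ and summing the Sweedlers of $\bar\Delta^{|S|}(a)$ extracts, at the $S$-positions, the operator $\lambda_{|S|}(a)$ evaluated on inputs determined by $b$; by the previous paragraph this vanishes whenever $|S|\geq i+1$ for $a\in K_i$, and symmetrically the $b$-Sweedler sum vanishes for $|T|\geq j+1$. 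Taking $n = i+j+1$, any surviving $(S,T)$ would require $|S|\leq i$, $|T|\leq j$ and $|S\cup T| = i+j+1$, contradicting $|S|+|T|\geq|S\cup T|$. Hence $\lambda_{i+j+1}(ab) = 0$, i.e.\ $ab\in K'_{i+j}$. Applying this to Sweedler pairs $h_{(1)}\in K_i$, $k_{(1)}\in K_j$ (provided by the subcoalgebra property of the previous paragraph) gives $h_{(1)}k_{(1)}\in K'_{i+j}$, so $\Delta(hk)\in K'_{i+j}\tensor H$, whence $hk\in K_{i+j}$.

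The main conceptual obstacle is the combinatorial expansion of $\bar\Delta^n(ab)$ as a sum over pairs $(S,T)$ with $S\cup T = [n]$; once that is available, the pigeonhole $|S|+|T|\geq |S\cup T|$ closes the argument. Sign management when separating the wedge into $S$- and $S^c$-parts is routine and does not affect the vanishing conclusions.
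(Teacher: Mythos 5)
Your proof is correct, and it reaches the same conclusion as the paper via a related but genuinely reorganized argument. The paper also proceeds by a pigeonhole on a multilinear expansion of $(ab)\actson\cdot$, but it splits each position into just two terms via the identity $(ab)\actson v = a\cdot(b\actson v)+\eps(b)(a\actson v)$; your use of $\pi=\id-\iota\eps$ produces a finer three-way split per position (equivalently, a sum over pairs $S,T$ with $S\cup T=[n]$), which makes the bookkeeping more symmetric and makes the factorization of the $a$-Sweedler sum from the $b$-Sweedler sum completely transparent. Two further differences are worth noting. First, you establish the subcoalgebra property of $K_i$ (via $\Delta^n(h)\in(K'_i)^{\tensor n}$ and the iterated intersection trick) as part of this lemma, whereas the paper proves it in a separate lemma stated afterwards; since your multiplicativity argument explicitly uses the subcoalgebra property to pass from $h_{(1)}k_{(1)}\in K'_{i+j}$ to $hk\in K_{i+j}$, your ordering is the more self-contained one. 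Second, and more substantively, you are careful to derive and invoke $K_i\subseteq K'_n$ for all $n\geq i$ (which requires the full hypothesis $\Delta h\in K'_i\tensor H$, not merely $h\in K'_i$) before concluding that $\lambda_{|S|}(a)=0$ for $|S|\geq i+1$. The paper's proof is phrased for $a\in K'_i$, $b\in K'_j$, and the vanishing of the relevant wedge when there are more than $i+1$ (resp.\ $j+1$) factors is not immediate from $K'_i$ alone; the argument really needs the nesting you supply. Finally, you verify $1\in K_0$ and exhaustion $K_i=H$ for $i\geq\dim V$, which the paper leaves implicit. In short: same pigeonhole idea, but your version is more careful at exactly the points where the filtration structure (as opposed to the single level $K'_i$) is actually needed.
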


\begin{proof} First, we want to show $K_i\subset K_{i+1}$ for any $i\geq 0$. We consider $h\in K_i$ and we write $\Delta h=\sum_k r^k\tensor h^k$ with $(h^k)_k$ in $H$ and $(r^k)_k$ in $K'_i$. Then
\begin{align*}
 &(h_{(1)}\actson v_1) \wedge \dots \wedge (h_{(i+2)}\actson v_{i+2}) \tensor h_{(i+3)} \\
 &= \sum_k (r^k_{(1)}\actson v_1) \wedge \dots \wedge (r^k_{(i+1)}\actson v_{i+1}) \wedge (h^k_{(1)}\actson v_{i+2}) \tensor h^k_{(2)}
  = 0
\ ,
\end{align*}
for any $v_1,\dots,v_{i+2}\in V$, so $h\in K_{i+1}$, as desired.

To see that we obtain an algebra filtration, consider $i,j\geq 0$ and let $m:=i+j$. If $a,b\in H$, then
\[ (ab)\actson v = a \cdot (b\actson v)+\eps(b)(a\actson v)
\tforall v\in V
\ .
\]
Let us use the shorthand notations $S(a,b,v):=a \cdot (b\actson v)$ and $T(a,b,v):=\eps(b)(a\actson v)$. Then for all $v_1,\dots,v_{m+1}$ in $V$,
\begin{align*}
 &((ab)_{(1)}\actson v_1) \wedge\dots\wedge
 ((ab)_{(m+1)}\actson v_{m+1})
 \tensor (ab)_{(m+2)} \\
 &=((a_{(1)}b_{(1)})\actson v_1) \wedge\dots\wedge
 ((a_{(m+1)}b_{(m+1)})\actson v_{m+1})
 \tensor (ab)_{(m+2)} \\
 &= (S(a_{(1)},b_{(1)},v_1)+T(a_{(1)},b_{(1)},v_1)) \wedge\dots\\
 &\qquad\qquad\dots\wedge
 (S(a_{(m+1)},b_{(m+1)},v_{m+1})+T(a_{(m+1)},b_{(m+1)},v_{m+1}))
 \tensor (ab)_{(m+2)}
\ .
\end{align*}
Now we can simplify the wedge product using the distributive law and after swapping wedge factors and relabeling $v_1,\dots,v_{m+1}$ as $v'_1,\dots,v'_{m+1}$ when necessary, every summand will contain a factor
\[ S(a_{(1)},b_{(1)},v'_1) \wedge\dots\wedge S(a_{(i+1)},b_{(i+1)},v'_{i+1})
\]
or a factor
\[ T(a_{(1)},b_{(1)},v_1) \wedge\dots\wedge T(a_{(j+1)},b_{(j+1)},v'_{j+1})
\ ,
\]
so every summand vanishes if $a\in K'_i$ and $b\in K'_j$. Hence for such $a$ and $b$, the product $ab$ lies in $K'_m$, which implies $K_i K_j\subset K_{i+j}$.

Finally, note that $K_d=K'_d=H$, where $d=\dim V$.
\end{proof}

\begin{lem} $K_i$ is a subcoalgebra of $H$ and a submodule of $H$ under the adjoint action for all $i\geq 0$.
\end{lem}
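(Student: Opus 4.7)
The plan is to prove both assertions by first establishing the stronger fact that $K'_i$ itself is stable under the adjoint action and then deducing the adjoint invariance of $K_i$; the subcoalgebra property follows separately from a short coassociativity calculation.

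The key tool is the map
\[
\phi_i\colon H\to\End(\wedge^{i+1}V),\qquad \phi_i(h)(v_1\wedge\dots\wedge v_{i+1})=(h_{(1)}\actson v_1)\wedge\dots\wedge(h_{(i+1)}\actson v_{i+1}),
\]
which is well-defined because cocommutativity makes the right-hand side alternating in the $v_j$, and which has $\ker\phi_i=K'_i$. I intend to show that $\phi_i$ is $H$-equivariant with respect to the adjoint action on $H$ and the induced conjugation action on $\End(\wedge^{i+1}V)$, i.e.\ $\phi_i(h\cdot k)=h_{(1)}\circ\phi_i(k)\circ Sh_{(2)}$. The starting point is the pointwise identity $(h\cdot k)\actson v=h_{(1)}\cdot(k\actson(Sh_{(2)}\cdot v))$, which is immediate from $h_{(1)}Sh_{(2)}=\eps(h)$. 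Iterating it in all $i+1$ wedge slots produces a $(2i+1)$-fold coproduct of $h$ in which each position $j$ carries a matched pair of Sweedler factors, and cocommutativity is exactly what allows one to rearrange this sum so that the ``left'' factors regroup as $\Delta^{(i)}(h_{(1)})$ and the ``right'' factors as $\Delta^{(i)}(h_{(2)})$. Once this is done, $\ker\phi_i=K'_i$ is automatically $\ad$-stable.

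For the adjoint invariance of $K_i$, I would then use the identity $\Delta(h\cdot k)=(h_{(1)}\cdot k_{(1)})\tensor(h_{(2)}\cdot k_{(2)})$, itself a consequence of cocommutativity. Given $k\in K_i$, write $\Delta k=\sum_\alpha x_\alpha\tensor y_\alpha$ with $x_\alpha\in K'_i$; then
\[
\Delta(h\cdot k)=\sum_\alpha(h_{(1)}\cdot x_\alpha)\tensor(h_{(2)}\cdot y_\alpha)\in K'_i\tensor H
\]
by the $\ad$-invariance of $K'_i$, so $h\cdot k\in K_i$.

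For the subcoalgebra property the argument is a clean coassociativity computation not requiring $\phi_i$. Given $h\in K_i$, the remark after the definition gives $\Delta h\in K'_i\tensor K'_i$, and choosing a minimal decomposition $\Delta h=\sum_\alpha x_\alpha\tensor y_\alpha$ with the $y_\alpha$ linearly independent forces $x_\alpha\in K'_i$. Then
\[
\sum_\alpha\Delta x_\alpha\tensor y_\alpha=(\Delta\tensor\id)\Delta h=(\id\tensor\Delta)\Delta h=\sum_\alpha x_\alpha\tensor\Delta y_\alpha\in K'_i\tensor H\tensor H,
\]
and linear independence of the $y_\alpha$ forces $\Delta x_\alpha\in K'_i\tensor H$, i.e.\ $x_\alpha\in K_i$, giving $\Delta h\in K_i\tensor H$. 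Cocommutativity yields the symmetric statement $\Delta h\in H\tensor K_i$, and the elementary identity $(K_i\tensor H)\cap(H\tensor K_i)=K_i\tensor K_i$ completes the proof. The main obstacle is the equivariance of $\phi_i$: the bookkeeping over a $(2i+1)$-fold coproduct is delicate, and only cocommutativity permits the necessary re-pairing of Sweedler factors.
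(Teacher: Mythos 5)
Your proof is correct. For the subcoalgebra claim, your coassociativity-plus-linear-independence argument is essentially the paper's argument written out a bit more explicitly (the paper uses a projection onto a single $h^j$ along the others to extract the same information, and implicitly invokes cocommutativity together with $(K_i\tensor H)\cap(H\tensor K_i)=K_i\tensor K_i$ for the final step). For the adjoint-invariance, your route is a genuine, if modest, reorganization: you isolate the stronger statement that $K'_i$ itself is $\mathrm{ad}$-stable (via $H$-equivariance of your map $\phi_i:H\to\End(\wedge^{i+1}V)$) and then pass to $K_i$ by the cocommutative compatibility $\Delta(h\cdot k)=(h_{(1)}\cdot k_{(1)})\tensor(h_{(2)}\cdot k_{(2)})$. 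The paper instead works directly with the $(i+2)$-fold coproduct of $k\cdot h$, establishing only that $K_i$ is a submodule. The Sweedler bookkeeping is essentially the same in both cases -- the paper's key step of pulling $k_{(1)}$ out of the wedge as a single factor acting on $\wedge^{i+1}V$ is precisely your re-pairing of the $(2i+1)$-fold coproduct -- but your version makes the $\mathrm{ad}$-stability of $K'_i$ (which the paper's definition of $K_i$ presupposes but never states as a fact about $K'_i$ alone) visible as a clean intermediate statement, at the small cost of having to check that $\phi_i$ is well-defined on the exterior power (alternating via cocommutativity and $\mathrm{char}\,\FF=0$). Both routes are valid.
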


\begin{proof} Consider $h\in K_i$. We can write $\Delta h=\sum_k r^k\tensor h^k$ for linearly independent $(h^k)_k$ in $H$ and suitable elements $(r^k)_k$ in $K'_i$. For a given index $j$, let $p_j$ be a projection of $H$ onto $\FF h^j$ along $h^k$ for all $k\neq j$. Then
\begin{align*}
 & (r^j_{(1)}\actson v_1)\wedge \dots \wedge (r^j_{(i+1)}\actson v_{i+1})\tensor r^j_{(i+2)} \tensor h^j
 \\
 &=
 (\id\tensor\id\tensor p_j)
 (\sum_k (r^k_{(1)}\actson v_1)\wedge \dots \wedge (r^k_{(i+1)}\actson v_{i+1})\tensor r^k_{(i+2)} \tensor h^k)
 \\
 &=
 (\id\tensor\id\tensor p_j)
 (\sum_k (r^k_{(1)}\actson v_1)\wedge \dots \wedge (r^k_{(i+1)}\actson v_{i+1})\tensor h^k_{(1)} \tensor h^k_{(2)})
 = 0
 \ ,
\end{align*}
so $\Delta r^j\in K'_i\tensor H$, so $r^j\in K_i$, and $K_i$ is a subcoalgebra, as desired.

To see that $K_i$ is a submodule of $H$, we first note that for all $h,k\in H$ and all $v\in V$,
\[ (k \cdot h)\actson v
 = (k_{(1)} h Sk_{(2)})\actson v
 = k_{(1)} \cdot (h\actson (Sk_{(2)} \cdot v))
\ .
\]
So assume $h\in K_i$. Then
\begin{align*}
 & ((k \cdot h)_{(1)}\actson v_1) \wedge \dots \wedge ((k \cdot h)_{(i+1)}\actson v_{i+1}) \tensor (k \cdot h)_{(i+2)} 
 \\
 &= 
 (k_{(1)} \cdot (h_{(1)}\actson (Sk_{(2)}\cdot v_1))) \wedge \dots
 \wedge (k_{(2i+1)} \cdot (h_{(i+1)}\actson (Sk_{(2i+2)}\cdot v_{i+1})))
 \tensor k_{(2i+3)} \cdot h_{(i+2)} 
 \\
 &=  k_{(1)} \cdot ((h_{(1)}\actson (Sk_{(2)}\cdot v_1)) \wedge \dots
 \wedge (h_{(i+1)}\actson (Sk_{(i+2)}\cdot v_{i+1})) )
 \tensor k_{(i+3)} \cdot h_{(i+2)}  
 \\
 &= 0
\end{align*}
and indeed, $k \cdot h\in K_i$.
\end{proof}

Analogous to the proof of \cite[Prop~2.8]{EGG} we define the notation
\begin{equation}
 (v_1,\dots,v_k|x,y)
 :=(\kappa(x,y)_{(1)} \actson v_1)\wedge\dots\wedge
 (\kappa(x,y)_{(k)}\actson v_k)\tensor\kappa(x,y)_{(k+1)}
 \in\Lambda^k V\tensor H
\end{equation}
for all $v_1,\dots,v_k,x,y\in V$.

Now we have a counterpart to \cite[Prop.~2.8]{EGG} on the ``support'' of $\kappa$:

\begin{prop} \label{image-kappa} Assume $\kappa:V\wedge V\to H$ has the Jacobi property. Then $\im\kappa\subset K_2$.
\end{prop}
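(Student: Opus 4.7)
My plan is to translate the Jacobi identity into a three-wedge identity in $\bigwedge^3 V\tensor H$ that directly expresses $\kappa(x,y)\in K_2$, and then conclude by a linear algebra argument.

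First, I would rewrite the Jacobi identity as an identity in $V\tensor H$. From $hv=(h_{(1)}\cdot v)h_{(2)}$, $h_{(1)}\cdot v=(h_{(1)}\actson v)+\eps(h_{(1)})v$, and the counit axiom, one obtains $[h,v]=(h_{(1)}\actson v)h_{(2)}$, so the Jacobi identity reads
\[
J(x,y,z):=(\kappa(x,y)_{(1)}\actson z)\kappa(x,y)_{(2)}+(\kappa(y,z)_{(1)}\actson x)\kappa(y,z)_{(2)}+(\kappa(z,x)_{(1)}\actson y)\kappa(z,x)_{(2)}=0
\]
in $A_\kappa$. Since the generators of $I_\kappa$ have nontrivial components only in tensor degrees $0$ and $2$ (with respect to $\deg V=1$, $\deg H=0$), a filtration argument lifts this identity to an identity in $V\tensor H\subset T(V)\rtimes H$.

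Second, I would define the linear map $\Phi_{v_1,v_2}:V\tensor H\to\bigwedge^3 V\tensor H$ by $v\tensor h\mapsto v\wedge(h_{(1)}\actson v_1)\wedge(h_{(2)}\actson v_2)\tensor h_{(3)}$ and apply it to $J(x,y,z)=0$. Using coassociativity to further expand the second Sweedler factor of $\kappa(x,y)$, and cocommutativity to permute Sweedler indices and cyclically reorder wedge positions into canonical form, each cyclic summand becomes one of $(v_1,v_2,z|x,y)$, $(v_1,v_2,x|y,z)$, $(v_1,v_2,y|z,x)$, producing the identity
\[
(v_1,v_2,z|x,y)+(v_1,v_2,x|y,z)+(v_1,v_2,y|z,x)=0 \qquad(*)
\]
in $\bigwedge^3 V\tensor H$, valid for all $v_1,v_2,x,y,z\in V$.

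Finally, I would deduce from $(*)$ that $(v_1,v_2,v_3|x,y)=0$ for all $v_i,x,y\in V$, which is precisely the defining condition $\kappa(x,y)\in K_2$. By cocommutativity and the antisymmetry of $\kappa$, the value $(u_1,u_2,u_3|u_4,u_5)$ depends up to sign only on the unordered $(3,2)$-partition of $\{u_1,\ldots,u_5\}$, giving $\binom{5}{2}=10$ essentially distinct values for a generic five-tuple. Specializing $(*)$ to each of these ten choices of ``fixed pair'' $\{v_1,v_2\}$ produces ten linear equations among these ten values; the main obstacle is to show this linear system has full rank. A direct elimination shows the only solution is zero, so $\im\kappa\subset K_2$.
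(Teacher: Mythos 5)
Your proposal is correct and is a close cousin of the paper's argument, but it takes a more direct (and more brute-force) route. Both proofs start from the same place: translate the Jacobi identity into the vanishing of $(z|x,y)+(x|y,z)+(y|z,x)$ in $V\otimes H$, and both aim for the same target: $(z,u,v|x,y)=0$ in $\bigwedge^3 V\otimes H$, which is exactly $\im\kappa\subset K_2$. The paper, following \cite[lem.~2.10, lem.~2.11]{EGG}, first derives the intermediate rank-two symmetry $(z,u|x,y)=(x,y|z,u)$ and then uses it together with one more wedging to get the rank-three vanishing essentially for free. You skip that intermediate lemma: wedging the rank-one identity into $\bigwedge^3 V\otimes H$ via $\Phi_{v_1,v_2}$ gives your $(*)$, and you then close the argument with a linear system on the ten values $(S|T)$ indexed by unordered $(3,2)$-partitions of a five-element index set. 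I checked your elimination: writing $\alpha_{123|45},\dots,\alpha_{345|12}$, the ten equations (one per unordered pair $\{v_1,v_2\}$) reduce, after solving for six of the unknowns, to $\alpha_3=\alpha_4$, $\alpha_7=-\alpha_3$, $\alpha_1=-2\alpha_3$, and $\alpha_1=\alpha_3$, forcing $3\alpha_3=0$ and hence all values to vanish; since $\FF$ has characteristic $0$, this is fine. What your route buys is a self-contained, one-shot calculation with no appeal to the EGG lemmas; what the paper's route buys is a cleaner conceptual structure and, as a by-product, the rank-two symmetry statement itself, which is of independent interest. One small caveat applies to both write-ups: the step from "Jacobi identity holds in $A_\kappa$" to "Jacobi identity holds as an element of $V\otimes H$" deserves a word of justification (it is the standard filtration/Koszulity argument from \cite{BG,EGG,WW,Kh}); your parenthetical remark gestures at it but does not spell it out, and neither does the paper, so you are in good company.
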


\begin{proof} This is a word-for-word translation of \cite[Prop.~2.8]{EGG} and the associated lemmas:

Note that in $A$,
\[ [h,v]=(h_{(1)} \cdot v) h_{(2)}-\eps(h_{(1)})v h_{(2)}=(h_{(1)}\actson v) h_{(2)}
\ ,
\]
so using our new notation, the Jacobi identity reads
\begin{equation} \label{eq-Jacobi-rank1}
 (v|x,y)+(x|y,v)+(y|v,x)=0
 \tforall v,x,y\in V
 \ .
\end{equation}
Now as in \cite[Lem.~2.10]{EGG}, this implies
\begin{equation} \label{eq-Jacobi-rank2}
 (z,u|x,y)=(x,y|z,u)
 \tforall z,u,x,y\in V
 \ . 
\end{equation}
Now as in \cite[Lem.~2.11]{EGG}, this implies
\begin{equation} \label{eq-Jacobi-rank3}
(z,u,v|x,y)=0
\tforall z,u,v,x,y\in V
\ .
\end{equation}

Hence if $h=\kappa(x,y)\in H$ for elements $x,y\in V$, then
\[
 (h_{(1)}\actson z)\wedge (h_{(2)}\actson u)\wedge (h_{(3)}\actson v)
 \tensor h_{(4)} = 0
 \tforall z,u,v\in V
 \ ,
\]
so $\Delta h\in K'_2\tensor H$ and hence $h\in K_2$.
\end{proof}

\begin{rem} Compare this with \cite[Prop.~4.3]{Kh} which is formulated for a cocommutative bialgebra and with an additional deformation parameter $\l$ (and note that the above proof works for a cocommutative bialgebra, as well).
\end{rem}

Extending the class of examples we obtain from transferring the discussion in \cite[Sec.~2.3]{EGG} to our setting, we have the following class of examples:

\begin{defn}\label{defn-kappa-std} Consider elements $\tau\in (V\wedge V)^*\tensor K_0$,
\[
 \sigma=\sum_m \sigma_m\tensor h^m
 \in (V\wedge V)^*\tensor K_1
 \ ,\qquad
 \theta=\sum_i \theta_i\tensor k^i \in (V\wedge V)^*\tensor K_2
\ ,
\]
which can be viewed as linear maps from $V\wedge V$ to $K_0$, $K_1$ and $K_2$, respectively.
Using those we define new linear maps from $V\wedge V$ to $H$: $\kappa_\tau(x,y):=\tau(x,y)$,
\[ \kappa_\sigma(x,y):= \sum_m
    \sigma_m (h^m_{(1)}\actson x,y)h^m_{(2)}
 	+ \sigma_m (x,h^m_{(1)}\actson y)h^m_{(2)}
\ ,
\]
\[
  \kappa_\theta(x,y):= \sum_i \theta_i(k^i_{(1)}\actson x,k^i_{(2)}\actson y) k^i_{(3)}
  \
\]
for all $x,y\in V$, and
\begin{equation} \label{eq-kappa-example}
 \kappa := \kappa_\tau + \kappa_\sigma + \kappa_\theta
 \ .
\end{equation}
\end{defn}

\begin{rem} $\kappa_\sigma$ and $\kappa_\theta$ actually only depend on $[\sigma]$ and $[\theta]$ in $K_1/K_0$ and $K_2/K_1$, respectively. This is, because if $h\in K_0$ and $k\in K_1$, then
\[ h_{(1)}\actson x \tensor h_{(2)}
  = h_{(1)}\actson y \tensor h_{(2)}
  = 0
\]
and
\[ (k_{(1)}\actson x)\wedge (k_{(2)}\actson y) \tensor k_{(3)} = 0
\ .
\]
\end{rem}

\begin{lem} \label{lem-kappa-H-linear} Each of $\kappa_\tau$, $\kappa_\sigma$ or $\kappa_\theta$ as in the definition is $H$-equivariant if the corresponding map $\tau$, $\sigma$ or $\theta$ is $H$-equivariant, respectively.
In particular, $\kappa$ is $H$-equivariant if $\tau$, $\sigma$ and $\theta$ are $H$-equivariant.
\end{lem}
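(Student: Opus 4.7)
The plan is to handle the three cases in order. The case of $\tau$ is trivial because $\kappa_\tau = \tau$ by definition, so the equivalence is tautological; and the final ``in particular'' claim about $\kappa = \kappa_\tau + \kappa_\sigma + \kappa_\theta$ then follows by linearity once the three individual cases are settled.

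For $\kappa_\sigma$, I would verify the $H$-equivariance condition $k \cdot \kappa_\sigma(x,y) = \kappa_\sigma(k_{(1)} \cdot x, k_{(2)} \cdot y)$ directly from the definition. The toolkit consists of (i) the derivation property $k \cdot (ab) = (k_{(1)} \cdot a)(k_{(2)} \cdot b)$ of the adjoint action on products, (ii) the rewrite $kh = (k_{(1)} \cdot h) k_{(2)}$, (iii) the key identity $(kh)\actson v = k \cdot (h \actson v) + \eps(h)(k \actson v)$ already used in the proof of the filtration lemma, and (iv) cocommutativity of $H$ to reshuffle Sweedler subscripts. After expanding both sides, the ``main'' pieces match after applying these identities; the ``error'' pieces carry an $\eps(h^m_{(1)})$ that collapses $h^m_{(1)} \otimes h^m_{(2)}$ back to a single copy of $h^m$, and these land exactly in the two-term symmetric structure of $\kappa_\sigma$ (the summand with $h^m_{(1)} \actson y$ absorbs the errors produced by the summand with $h^m_{(1)} \actson x$, and vice versa). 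The chain of equalities one obtains holds precisely when $\sigma(k_{(1)} \cdot x, k_{(2)} \cdot y) = k \cdot \sigma(x,y)$, i.e., when $\sigma$ is $H$-linear; reading it backwards yields the converse.

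For $\kappa_\theta$, the argument is structurally parallel but exploits the fact that $\theta$ takes values in $K_2$. Expanding $k \cdot \kappa_\theta(x,y)$ and $\kappa_\theta(k_{(1)} \cdot x, k_{(2)} \cdot y)$ by the same toolkit, the discrepancy between the two sides reduces to terms of the form $(k^i_{(1)} \actson v_1) \wedge (k^i_{(2)} \actson v_2) \otimes k^i_{(3)}$ paired with some $\theta_i$, which vanish by membership of $k^i$ in $K_2$. What remains is precisely the condition of $H$-linearity of $\theta$, giving the equivalence.

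The main obstacle will be the Sweedler-subscript bookkeeping, particularly in the $\theta$ case, where the coproduct of $k^i$ has three legs interacting with the coproduct of the acting element $k$. One must apply cocommutativity carefully to permute these legs into the configuration that matches the defining relation of $K_2$ (or $K_1$, for $\sigma$), so that the unwanted cross terms can be correctly identified as vanishing. Modulo the observation in the preceding remark that $\kappa_\sigma$ and $\kappa_\theta$ only depend on the classes $[\sigma] \in (V \wedge V)^* \otimes K_1/K_0$ and $[\theta] \in (V \wedge V)^* \otimes K_2/K_1$, the converse direction is to be interpreted at the level of these classes, which is consistent with $H$-stability of the subcoalgebras $K_i$ under the adjoint action established earlier.
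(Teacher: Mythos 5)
There is a genuine error in your proposed mechanism for $\kappa_\theta$. You claim the discrepancy between $k\cdot\kappa_\theta(x,y)$ and $\kappa_\theta(k_{(1)}\cdot x, k_{(2)}\cdot y)$ reduces to expressions of the form $(k^i_{(1)}\actson v_1)\wedge(k^i_{(2)}\actson v_2)\tensor k^i_{(3)}$ paired with $\theta_i$, which then vanish ``by membership of $k^i$ in $K_2$.'' But $K_2$-membership only annihilates the \emph{three}-wedge expression $(k^i_{(1)}\actson v_1)\wedge(k^i_{(2)}\actson v_2)\wedge(k^i_{(3)}\actson v_3)\tensor k^i_{(4)}$; the two-wedge expression you describe is precisely the shape of $\kappa_\theta$ itself once paired with $\theta_i\in(V\wedge V)^*$, so it certainly does not vanish, and a three-wedge expression cannot be paired against $\theta_i$ to give a scalar. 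You appear to be importing the $K_2$ argument from the later proof of the Jacobi property (Proposition~\ref{kappa-example}), where the three wedge legs get distributed cyclically over the three arguments of the Jacobi sum --- a structure simply not present here. Your parallel claim that for $\kappa_\sigma$ the two symmetric summands mutually absorb $\eps$-contracted error terms is likewise unverified and does not correspond to what happens.

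The paper's proof avoids all of this via a more structural route. It reformulates ``$\kappa_\theta$ is $H$-linear'' as ``$\kappa_\theta$ is $H$-invariant as an element of $(V\wedge V)^*\tensor H$'' (contragredient action on the first factor, adjoint on the second), and then proves the clean equivariance identity $h\cdot\kappa_\theta = \kappa_{h\cdot\theta}$ by a short Sweedler chain. The engine is the auxiliary identity
\[ h\actson (Sk\cdot x) \;=\; Sk_{(1)}\cdot\bigl((k_{(2)}\cdot h)\actson x\bigr), \]
which moves an antipode twist of the argument of $\actson$ into an adjoint twist of the acting element; combined with cocommutativity and the fact that the adjoint action makes $H$ a module coalgebra, this collapses $(h\cdot\kappa_\theta)(x,y)$ directly to $\kappa_{h\cdot\theta}(x,y)$, with no reference to $K_1$ or $K_2$ anywhere. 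The case of $\kappa_\sigma$ is handled by the very same computation --- no cancellation between its two symmetric terms is needed. The $K_i$ membership conditions enter only afterwards, in the proof of the Jacobi property, not in the proof of $H$-linearity.
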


\begin{proof} For $\kappa_\tau$, the assertion is tautological. For $\kappa_\sigma,\kappa_\theta$ let us first note that for any $h,k\in H$ and any $x\in V$,
\[ h\actson (Sk \cdot x)
 = Sk_{(1)} \cdot  ((k_{(2)} h Sk_{(3)})\actson x)
 =  Sk_{(1)} \cdot  ((k_{(2)} \cdot h)\actson x)
\]
using the adjoint action in $H$. Now a linear map from $V\wedge V$ to $H$ is $H$-equivariant, if the corresponding element in $(V\wedge V)^*\tensor H$ is $H$-invariant. So we can verify for any $h\in H,x,y\in V$:
\begin{align*}
 (h \cdot \kappa_\theta)(x,y)
 &= \sum_i \theta_i(k^i_{(1)}\actson (Sh_{(1)} \cdot x), k^i_{(2)}\actson (Sh_{(2)} \cdot y)) h_{(3)} \cdot k^i_{(3)} \\
 &= \sum_i \theta_i(Sh_{(1)} \cdot (h_{(2)} \cdot k^i_{(1)})\actson x, Sh_{(3)} \cdot (h_{(4)} \cdot k^i_{(2)})\actson y) h_{(5)} \cdot k^i_{(3)} \\
 &= \sum_i (h_{(1)} \cdot \theta_i)((h_{(2)} \cdot k^i)_{(1)}\actson x, (h_{(2)} \cdot k^i)_{(2)}\actson y) (h_{(2)} \cdot k^i)_{(3)} \\
 &= \kappa_{h \cdot \theta}(x,y)
 \ ,
\end{align*}
and analogously for $\kappa_\sigma$.
\end{proof}

\begin{rem} Obviously, one way of obtaining $H$-equivariant $\tau,\sigma,\theta$ is by choosing $H$-invariant elements in $(V\wedge V)^*$ and $H$-invariant (that is, $H$-central) elements in $K_0$, $K_1$ and $K_2$.
The map $\kappa$ generated according to \Cref{defn-kappa-std} will be $H$-equivariant and will have the Jacobi property, so $A_{H,V,\kappa}$ will be a PBW deformation. If additionally $V$ is an orthogonal $H$-module, $A_{H,V,\kappa}$ will be a Hopf--Hecke algebra.
\end{rem}

\begin{prop} \label{kappa-example} Let $\kappa$ be as in \Cref{defn-kappa-std}. Then it has the Jacobi property.

In particular, if additionally $\tau$, $\sigma$, $\theta$ are $H$-equivariant, then $A=A_{H,V,\kappa}$ has the PBW property.
\end{prop}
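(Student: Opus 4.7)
The plan is to verify the Jacobi identity \eqref{eq-Jacobi-rank1} separately for each of $\kappa_\tau,\kappa_\sigma,\kappa_\theta$; by linearity of the cyclic sum $\kappa\mapsto(v|x,y)+(x|y,v)+(y|v,x)$, these three checks combine into the result for $\kappa$, and each one further reduces by bilinearity to the case of a single decomposable tensor in $(V\wedge V)^*\tensor K_i$. Once the Jacobi property is established, the ``in particular'' assertion is immediate: Lemma~\ref{lem-kappa-H-linear} gives $H$-linearity of $\kappa$ under the additional hypothesis on $\tau,\sigma,\theta$, and Proposition~\ref{prop-Jacobi-identity} then yields the PBW property.

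The case $\kappa_\tau$ is tautological: for $h\in K_0=\Delta\inv(K'_0\tensor H)$, the identity $(h_{(1)}\actson v)\tensor h_{(2)}=0$ in $V\tensor H$ follows directly from $K'_0=\{h:h\actson v=0\text{ for all }v\}$, so every term $(v|x,y)_{\kappa_\tau}$ already vanishes. For $\kappa_\sigma$, take a decomposable $\sigma=\sigma_0\tensor h$ with $h\in K_1$, and use the antisymmetry of $\sigma_0$ to rewrite
\[
 \kappa_\sigma(x,y)=F(x,y)-F(y,x),\qquad F(x,y):=\sigma_0(h_{(1)}\actson x,y)\,h_{(2)}.
\]
A short computation using $\Delta F(x,y)=\sigma_0(h_{(1)}\actson x,y)(h_{(2)}\tensor h_{(3)})$ then gives $(z|x,y)_{\kappa_\sigma}=\Psi(x,y,z)-\Psi(y,x,z)$ in $V\tensor H$, where
\[
 \Psi(a,b,c):=\sigma_0(h_{(1)}\actson a,b)\,(h_{(2)}\actson c)\tensor h_{(3)}.
\]
The condition $h\in K_1$ says exactly that the tensor $(h_{(1)}\actson v_1)\tensor(h_{(2)}\actson v_2)\tensor h_{(3)}$ is symmetric in $v_1,v_2$; pairing its first slot against the linear functional $\sigma_0(\cdot,b)$ yields the ``outer-slot'' symmetry $\Psi(a,b,c)=\Psi(c,b,a)$. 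Substituting this symmetry into the cyclic sum $\Psi(x,y,z)-\Psi(y,x,z)+\Psi(y,z,x)-\Psi(z,y,x)+\Psi(z,x,y)-\Psi(x,z,y)$ makes all six terms cancel in pairs.

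For $\kappa_\theta$, an analogous but formally simpler argument applies. A decomposable $\theta=\theta_0\tensor k$ with $k\in K_2$ produces $(z|x,y)_{\kappa_\theta}=\Psi(x,y,z)$ directly, where now $\Psi(a,b,c):=\theta_0(k_{(1)}\actson a,k_{(2)}\actson b)(k_{(3)}\actson c)\tensor k_{(4)}$. Antisymmetry of $\theta_0$ combined with a cocommutative swap of the first two Sweedler factors gives $\Psi(b,a,c)=-\Psi(a,b,c)$. The $K_2$ condition $(k_{(1)}\actson v_1)\wedge(k_{(2)}\actson v_2)\wedge(k_{(3)}\actson v_3)\tensor k_{(4)}=0$, paired against $\theta_0\tensor\id_V\tensor\id_H$ after expanding the wedge as an alternating sum over $S_3$, yields $\sum_{\pi\in S_3}\mathrm{sgn}(\pi)\,\Psi(v_{\pi(1)},v_{\pi(2)},v_{\pi(3)})=0$; combined with the antisymmetry of $\Psi$ in its first two slots, this collapses to twice the cyclic sum $\Psi(x,y,z)+\Psi(y,z,x)+\Psi(z,x,y)$, which therefore vanishes. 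The main obstacle is really the $\kappa_\sigma$ case: the outer-slot symmetry $\Psi(a,b,c)=\Psi(c,b,a)$ is invisible from the form of $\kappa_\sigma$ and must be extracted by combining the $K_1$ condition on $h$ simultaneously with antisymmetry of $\sigma_0$ and cocommutativity, whereas for $\kappa_\theta$ the first-two-slot antisymmetry of the relevant $\Psi$ is immediate.
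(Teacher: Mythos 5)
Your proof is correct and follows the same approach as the paper's: verify the Jacobi identity separately for $\kappa_\tau$, $\kappa_\sigma$, $\kappa_\theta$ by pairing the respective $K_i$-vanishing conditions with $\tau$, $\sigma$, $\theta$, and then invoke Lemma~\ref{lem-kappa-H-linear} and Proposition~\ref{prop-Jacobi-identity} for the PBW conclusion. Your bookkeeping via the auxiliary tensor $\Psi$ and the symmetry $\Psi(a,b,c)=\Psi(c,b,a)$ (resp.\ the first-two-slot antisymmetry) makes explicit the cancellations that the paper asserts rather tersely after writing out the wedge identities and the six-term cyclic sum, but the underlying mechanism is the same.
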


\begin{proof} As in \cite[Thm.~2.13]{EGG}: By \Cref{prop-Jacobi-identity}, the PBW property is equivalent to the Jacobi identity if $\kappa$ is $H$-equivariant.

To verify the Jacobi property, we consider elements $x,y,z\in V$. Recall that the Jacobi identity reads
\[ 0 = (\kappa(x,y)_{(1)}\actson z) \kappa(x,y)_{(2)}
 + (\kappa(y,z)_{(1)}\actson x) \kappa(y,z)_{(2)}
 + (\kappa(z,x)_{(1)}\actson y) \kappa(z,x)_{(2)}
 \ .
\]

Now for all $h\in K_0$ and all $v\in V$,
\[ 0 = (h_{(1)}\actson v) \tensor h_{(2)}
\ ,
\]
which verifies the Jacobi identity for $\kappa_\tau$.

Also, for every index $m$ and all $x,y,z\in V$,
\[
0 = (h^m_{(1)}\actson x)\wedge (h^m_{(2)}\actson y)\wedge z \tensor h^m_{(3)}
 \ ,
\]
because $h^m\in K_1$, so
\begin{align*}
0
&= \sigma_m(h^m_{(1)}\actson x,h^m_{(2)}\actson y) z \tensor h^m_{(3)}
 + \sigma_m(h^m_{(1)}\actson y,z)(h^m_{(2)}\actson x) \tensor h^m_{(3)}
 + \sigma_m(z,h^m_{(1)}\actson x)(h^m_{(2)}\actson y) \tensor h^m_{(3)} \\
&= \sigma_m(h^m_{(1)}\actson y,z)(h^m_{(2)}\actson x) \tensor h^m_{(3)}
 + \sigma_m(z,h^m_{(1)}\actson x)(h^m_{(2)}\actson y) \tensor h^m_{(3)}
\ ,
\end{align*}
again, because $h^m\in K_1$.

Thus,
\begin{align*}
0 &= \sigma_m(h^m_{(1)}\actson x,y)(h^m_{(2)}\actson z)h^m_{(3)}
	+\sigma_m(x,h^m_{(1)}\actson y)(h^m_{(2)}\actson z)h^m_{(3)} \\
 &+\sigma_m(h^m_{(1)}\actson z,x)(h^m_{(2)}\actson y)h^m_{(3)}
	+\sigma_m(z,h^m_{(1)}\actson x)(h^m_{(2)}\actson y)h^m_{(3)} \\
 &+\sigma_m(h^m_{(1)}\actson y,z)(h^m_{(2)}\actson x)h^m_{(3)}
	+\sigma_m(y,h^m_{(1)}\actson z)(h^m_{(2)}\actson y)h^m_{(3)}
\ ,
\end{align*}
which verifies the Jacobi identity for $\kappa_\sigma$.

Finally for every index $i$ and all $x,y,z\in V$,
\[
 0 = (k^i_{(1)}\actson x)
   \wedge (k^i_{(2)}\actson y)
   \wedge (k^i_{(3)}\actson z) \tensor k^i_{(4)}
   \ ,
\]
because $k^i\in K_2$, so
\[
 0 = (
  \theta_i(k^i_{(1)}\actson x,k^i_{(2)}\actson y)
   (k^i_{(3)}\actson z)
  +\theta_i(k^i_{(1)}\actson z,k^i_{(2)}\actson x)
   (k^i_{(3)}\actson y)
  +\theta_i(k^i_{(1)}\actson y,k^i_{(2)}\actson z)
   (k^i_{(3)}\actson x)) k^i_{(4)}
   \ ,
\]
which verifies the Jacobi identity for $\kappa_\theta$.
\end{proof}

\begin{cor} In the situation of the proposition, if additionally $V$ is an orthogonal $H$-module, then $A=A_{H,V,\kappa}$ is a Hopf--Hecke algebra.
\end{cor}

\begin{defn} \label{def-standard} We call a PBW deformation $A=A_{H,V,\kappa}$ with a deformation map $\kappa$ as in \Cref{defn-kappa-std} a \emph{standard PBW deformation} or, if additionally $V$ is an orthogonal $H$-module, a \emph{standard Hopf--Hecke algebra}.
\end{defn}

We will investigate conditions under which PBW deformations or Hopf--Hecke algebras are standard.

\subsection{Maps with the Jacobi property for pointed cocommutative Hopf algebras}
\label{subsec-Jacobi-maps}
In the following, we consider the case of a pointed cocommutative Hopf algebra $H$ over $\FF$ (a field of characteristic $0$). We recall that this includes all cocommutative Hopf algebras over $\CC$.

Let $H$ be a cocommutative pointed Hopf algebra. Recall that by the structure theorem for cocommutative pointed Hopf algebras over a field of characteristic $0$, $H=H^1\rtimes\FF[G(H)]$, where $H^1$ is the universal enveloping algebra of the Lie algebra of primitive elements in $H$ and $\FF[G(H)]$ is the group algebra of the group of group-like elements $G(H)$ in $H$. For each group-like element $g\in G(H)$, $H^1g$ is a subcoalgebra of $H$ and $H=\bigoplus_{g\in G(H)} H^1g$ as coalgebras. Let $p_g:H\to H^1g$ be the corresponding projection map.

We continue to assume that $V$ is a finite-dimensional $H$-module. For any linear map $\kappa:V\wedge V\to H$ and any $g\in G(H)$, we define $\kappa_g:=p_g\circ\kappa:V\wedge V\to H^1 g$.

\begin{lem} \label{lem-kappa-g} A linear map $\kappa:V\wedge V\to H$ has the Jacobi property if and only if $\kappa_g$ has the Jacobi property for all $g\in G(H)$.
\end{lem}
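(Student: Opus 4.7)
The plan is to exploit the coalgebra decomposition $H=\bigoplus_{g\in G(H)} H^1 g$ provided by the structure theorem for pointed cocommutative Hopf algebras. As noted in the proof of \ref{image-kappa}, the Jacobi identity for $\kappa$ can be rewritten as
\[
\sum_{\text{cyclic}}\bigl(\kappa(x,y)_{(1)}\actson z\bigr)\tensor\kappa(x,y)_{(2)}=0
\]
inside $V\tensor H$, an expression which is manifestly linear in the map $\kappa$. The ``if'' direction then follows immediately: summing the Jacobi identities for the individual $\kappa_g$ over $g\in G(H)$ produces precisely the Jacobi identity for $\kappa=\sum_g\kappa_g$.

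For the converse, the crucial observation is that each $H^1 g$ is a subcoalgebra of $H$, so the coproduct of any element of $H^1 g$ lies in $H^1 g\tensor H^1 g$. Applied to $\kappa_g(x,y)\in H^1 g$, this shows that both Sweedler factors $\kappa_g(x,y)_{(1)}$ and $\kappa_g(x,y)_{(2)}$ remain in $H^1 g$; since $h\actson z$ always lies in $V$ for $h\in H$, the entire Jacobi expression for $\kappa_g$ lies in the summand $V\tensor H^1 g$ of the direct sum $V\tensor H=\bigoplus_g V\tensor H^1 g$. Hence the Jacobi identity for $\kappa$ decomposes componentwise into the Jacobi identities for the $\kappa_g$, and its vanishing forces each $g$-component to vanish individually.

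The argument is essentially linear algebra, and the only delicate bookkeeping is verifying that the Sweedler coproduct respects the $G(H)$-grading; this is handled entirely by the subcoalgebra property of each $H^1 g$, so no genuine obstacle arises beyond correctly invoking the structure theorem and the linearity of the Jacobi expression in $\kappa$.
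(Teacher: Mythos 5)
Your proof is correct and follows essentially the same route as the paper: the paper applies the projection $\id_V\tensor p_g$ onto $V\tensor H^1 g$ to the Jacobi identity in $V\tensor H$, which is exactly your decomposition argument, just phrased in terms of the projection rather than the direct-sum splitting. You spell out the subcoalgebra property of $H^1 g$ that makes this work, which the paper leaves implicit, but the key idea is the same.
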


\begin{proof} We can apply $\id_V\tensor p_g$ to the Jacobi identity in $V\tensor H$ to obtain the Jacobi identity for $\kappa_g$.
\end{proof}

\begin{defn} Let $C$ be a coalgebra. A filtration $(C_k)_k$ of $C$ as vector space is called a \emph{coalgebra filtration} if
\[
\Delta C_k\subset \sum_{0\leq i\leq k} C_i\tensor C_{k-i}
\ .
\]
Let $C_0$ be the \emph{coradical} of $C$, i.e. the sum of all simple subcoalgebras of $C$. The \emph{coradical filtration} of $C$ is defined inductively by $C_{k+1}:=\Delta\inv(C_0\tensor C+C\tensor C_k)$.
\end{defn}

We recall well-known facts from the theory of coalgebras:
The coradical filtration is a coalgebra filtration such that $C=\bigcup_{k\geq0} C_k$ for every coalgebra $C$.
If $C$ is a pointed coalgebra, for instance any cocommutative coalgebra over $\CC$, then $C_0=\bigoplus_{g\in G(C)} \FF g$ for the set of group-like elements $G(C)$ in $C$.

\newcommand{\rnk}{\operatorname{rnk}}
In the following, $\rnk$ will denote the rank of the action of an element of $H$ acting on $V$. We record a useful lemma. 

\begin{lem} We consider an element $g\in G(H)$ with $\rnk(g-1)=1$. Then $g$ acts diagonalizably on $V$ if either $g$ has finite order or if $V$ is an orthogonal module (i.e., there is a non-degenerate $H$-invariant symmetric bilinear form $\tri$ on $V$).
\end{lem}

\begin{proof} Since $\rnk(g-1)=1$, we can write $(g-1)|_V = f(\cdot)v$ with suitable non-zero $f\in V^*,v\in V$. Now it is enough to show $f(v)\neq0$, because then a basis of the kernel of $(g-1)|_V$ together with $v$ form a basis of $V$ consisting of eigenvectors of $g$.

If $g$ has finite order $r$, assume $f(v)=0$, then 
\[ \id_V = g|_V^r = (\id_V+f(\cdot)v)^r = \id_V + r f(\cdot) v
\ ,
\]
which is a contradiction. Hence $f(v)\neq0$, and $g$ acts diagonalizably.

Similarly, assume $V$ is orthogonal and $f(v)=0$. Since $f\neq 0$, we can pick $x\in V$ such that $f(x)\neq 0$, and we obtain
\[ \tri[v,x] = \tri[gv,gx] = \tri[v,x+f(x)v]
\Rightarrow \tri[v,v] = 0
\ .
\]
Now for all $y\in V\setminus(\ker f)$, $z\in\ker f$,
\begin{gather*}
     \tri[y,y] = \tri[gy,gy] = \tri[y+f(y)v,y+f(y)v]
 \Rightarrow \tri[v,y] = 0
\ ,
\\
\tri[x,z] = \tri[gx,gz] = \tri[x+f(x)v, z] 
 \Rightarrow \tri[v,z] = 0
 \ .
\end{gather*}
But this means that $\tri[v,V]=0$, which is a contradiction. Hence $f(v)\neq 0$ and again, $g$ acts diagonalizably.
\end{proof}

We have the following information on the group-like elements $g$ which are necessary to determine $\kappa$ and the corresponding maps $\kappa_g$ (see also \cite[Sec.~1]{RS}, \cite[Sec.~2.3]{EGG}):
\begin{prop} \label{rank-g} Let $\kappa:V\wedge V\to H$ be a linear map with the Jacobi property. Then the following holds for every $g\in G(H)$, where $(g-1)$ denotes the corresponding operator on $V$:
\begin{itemize}
\item{$\kappa_g=0$ if $\rnk(g-1)\not\in\{0,1,2\}$.
}
\item{If $\rnk(g-1)=1$, then $\kappa_g(x,y)=0$ for all $x,y\in V$ satisfying $((g-1)\cdot x)\tensor y-((g-1)\cdot y)\tensor x=0$.
}
\item{If $\rnk(g-1)=1$ and $g$ acts diagonalizably on $V$ (for instance, if $g$ has finite order or $V$ is an orthogonal $H$-module), then $\kappa_g(x,y)=0$ for all $x,y\in V$ satisfying $((g-1)\cdot x)\wedge y+x\wedge ((g-1)\cdot y)=0$.
}
\item{If $\rnk(g-1)=2$, then $\kappa_g(x,y)=0$ for all $x,y\in V$ satisfying $((g-1) \cdot x)\wedge ((g-1) \cdot y)=0$.
}
\end{itemize}
\end{prop}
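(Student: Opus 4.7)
My plan is to prove each bullet by projecting the identities \eqref{eq-Jacobi-rank1}, \eqref{eq-Jacobi-rank2}, and \eqref{eq-Jacobi-rank3} onto the $g$-component (via Lemma \ref{lem-kappa-g}), and then extracting the leading PBW symbol of $\kappa_g(x,y)$ in $H^1 g = U(P(H))\, g$. The crucial computation is the following: for $u \in U_n \setminus U_{n-1}$, where $U_\bullet$ denotes the standard PBW filtration on $U := U(P(H))$, the iterated coproduct has the form $\Delta^k(u) = 1^{\otimes k} \otimes u + \dots$, where the omitted summands all have strictly lower PBW degree in the last tensor slot. Consequently, for $h = ug + r$ with $r$ of PBW degree strictly less than $n$, the image of $T_k(h)$ in $\wedge^k V \otimes (U_n/U_{n-1})\, g$ equals $((g-1)v_1) \wedge \cdots \wedge ((g-1)v_k) \otimes \sigma(u)\, g$, where $\sigma(u) \in U_n/U_{n-1}$ is the principal symbol of $u$ (using $g \actson v = (g-1) \cdot v$ by group-likeness of $g$).

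For the first bullet, apply this with $k=3$ to $h = \kappa_g(x,y)$: since $\kappa_g(x,y) \in K_2$ by Proposition \ref{image-kappa}, we have $T_3(h) = 0$, so if $\rnk(g-1) \geq 3$ we can choose $v_1, v_2, v_3$ making the wedge nonzero, forcing $\sigma(u) = 0$ and hence $\kappa_g(x,y) = 0$. For the fourth bullet, project \eqref{eq-Jacobi-rank2}, i.e.\ $(z, u' | x, y)_g = (x, y | z, u')_g$, to the symbol in the last tensor slot: the left-hand side reads $((g-1)z) \wedge ((g-1)u') \otimes \sigma(u)\, g$ and the right-hand side reads $((g-1)x) \wedge ((g-1)y) \otimes \sigma(u'')\, g$, where $u$ and $u''$ are the PBW-top parts of $\kappa_g(x,y)$ and $\kappa_g(z, u')$ respectively. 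The right-hand side vanishes by the hypothesis on $x, y$; since $\rnk(g-1) = 2$, we may choose $z, u'$ making the left-hand wedge nonzero, forcing $\sigma(u) = 0$ and $\kappa_g(x,y) = 0$. For the second bullet, the analogous top-symbol projection of \eqref{eq-Jacobi-rank1} yields $((g-1)v) \otimes \sigma(u)\, g + ((g-1)x) \otimes (\cdots) + ((g-1)y) \otimes (\cdots) = 0$; the hypothesis $x, y \in \ker(g-1)$ kills the last two terms, and picking $v \notin \ker(g-1)$ forces $\sigma(u) = 0$.

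For the third bullet, diagonalizability of $g$ together with $\rnk(g-1) = 1$ yields $V = \ker(g-1) \oplus \FF e$ for a nontrivial $g$-eigenvector $e$. Writing $x = x_1 + a e$ and $y = y_1 + b e$ with $x_1, y_1 \in \ker(g-1)$, a direct computation shows the hypothesis $(g-1)\cdot x \wedge y + x \wedge (g-1)\cdot y = 0$ is equivalent to $a y_1 = b x_1$, which forces either $a = b = 0$ (so $x, y \in \ker(g-1)$ and bullet 2 applies) or $x \wedge y = 0$ (giving $\kappa_g(x, y) = 0$ trivially by antisymmetry). The main technical obstacle is setting up the PBW-leading-symbol map for $H^1 g$ cleanly: one must verify that only the $1^{\otimes k} \otimes u$ summand of $\Delta^k(u)$ contributes to the top PBW degree in the last tensor slot, and that lower-PBW parts of $\kappa_g(x,y)$ do not interfere with the leading-symbol equation. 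Once this normalization is in place, each bullet follows directly from the rank identities already extracted in Proposition \ref{image-kappa}.
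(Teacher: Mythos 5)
Your approach is essentially the one the paper takes. Both proofs project the three rank identities \eqref{eq-Jacobi-rank1}--\eqref{eq-Jacobi-rank3} (extracted in Proposition~\ref{image-kappa}) onto the leading term of a filtration of $H^1g$, and then read off constraints on the leading coefficient using the relation $g\actson v=(g-1)\cdot v$. The filtration you use (the PBW filtration on $U(P(H))\,g$) agrees with the coradical filtration that the paper uses on a finite-dimensional subcoalgebra containing $\im\kappa_g$, so your key computation — that among all summands of the iterated coproduct $\Delta^k(ug)$ only $g^{\otimes k}\otimes ug$ attains the top degree in the last slot — is exactly the content of the paper's identity $(\id\otimes p_j)\circ\Delta(h^i)=\delta_{ij}\,g\otimes h^j$. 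The paper normalizes this a bit differently: instead of taking the symbol of $\kappa_g(x,y)$ for a fixed pair $(x,y)$, it fixes once and for all a decomposition $\kappa_g=\sum_i\theta_i\,h^i$ with the $h^i$ in a finite-dimensional subcoalgebra $C$ and the top-degree $h^j$'s ($j\in J$) linearly independent mod $C_{k-1}$, and projects onto a single $h^j$ at a time. This is exactly the ``normalization'' you flag as the technical obstacle, and it is cleaner precisely because the Jacobi identities relate several values of $\kappa_g$ at once (e.g.\ $\kappa_g(x,y)$ and $\kappa_g(z,u')$ in \eqref{eq-Jacobi-rank2}), which a priori can have different PBW degrees; working with a fixed basis of $\im\kappa_g$ makes the degree-by-degree induction (the paper's $\kappa\rightsquigarrow\kappa'$ step) run without having to track degrees of separate values. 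Your bullet~1 argument does not encounter this since it only involves $\kappa_g(x,y)$ itself, and it is clean as stated. The one genuinely different step is bullet~3: the paper picks an explicit functional $\alpha\in V^*$ with $\alpha((g-1)\cdot z)=1$ and shows $\theta_j(x,y)=r^{-1}\theta_j((g-1)\cdot x\wedge y+x\wedge(g-1)\cdot y)$ directly, whereas you observe that, after diagonalizing $g$, the vanishing hypothesis forces either $x,y\in\ker(g-1)$ (bullet~2) or $x\wedge y=0$ (trivial by antisymmetry). Your reduction is correct and arguably tidier — it shows bullet~3 is a formal consequence of bullet~2 and antisymmetry, with no new Hopf-algebraic input.
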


\begin{proof} We fix $g\in G(H)$. Then by \Cref{lem-kappa-g}, $\kappa_g$ has the Jacobi property, so it is enough to consider the case $\kappa=\kappa_g$.

It is a basic statement on coalgebras that every finite-dimensional subspace is contained in a finite-dimensional subcoalgebra. Let $C$ be such a finite-dimensional subcoalgebra of $H^1 g$ (which is a subcoalgebra of $H$) containing $(\im\kappa_g)$. Let $(C_k)_{k\geq0}$ be the coradical filtration of $C$ and let $k$ be minimal such that $\im\kappa\subset C_k$. Note that $C_0=\FF g$ now, because $g$ is the unique group-like element in $C$.

Then we can write $\kappa_g=\sum_i \theta_i h^i$ with suitable non-zero $(\theta_i)_i$ in $(V\wedge V)^*$ and linearly independent $(h^i)_i$ in $C_k$. Let $J$ be the set of indices $j$ such that $h^j\in C_k\setminus C_{k-1}$ (where we set $C_{-1}=0$). Since $k$ was chosen minimally, $J\neq\emptyset$. For every $j\in J$, let $p_j$ be a projection of $C_k$ onto $\FF h^j$ along $C_{k-1}$ and along $h^i$ for all $i\neq j$. Then
\[ (\id\tensor p_j)\circ\Delta(h^i) = \delta_{ij} g\tensor h^j
\tforall i
\ .
\]

Thus if we apply $(\id\tensor p_j)$ to \eqref{eq-Jacobi-rank3}, this yields
\[ 0 = (g-1) \cdot z \wedge (g-1) \cdot u \wedge (g-1) \cdot v \tensor \theta_j(x,y) h^j
\tforall z,u,v,x,y\in V
\ ,
\]
so the operator $(g-1)$ has rank at most 2.

If we apply $(\id\tensor p_j)$ to the Jacobi identity $0=(x|y,z)+(y|z,x)+(z|x,y)$ in $V\tensor H$ for any $x,y,z\in V$, we obtain
\[
0
 = (((g-1) \cdot x)\theta_j(y,z) + ((g-1) \cdot y)\theta_j(z,x) + ((g-1) \cdot z)\theta_j(x,y)) \tensor h^j
 \ .
\]
Let us assume that $(g-1)$ has rank $1$, and let us pick $f\in V^*$ and $z\in V$ such that $f((g-1)\cdot z)=1$. Then the last equation implies
\[
\theta_j(x,y) = f((g-1)\cdot z)\theta_j(x,y)
 = - (f\tensor\theta_j(\cdot,z)) (((g-1)\cdot x) \tensor y-((g-1)\cdot y)\tensor x)
 \ ,
\]
so that $\theta_j(x,y)=0$ if $((g-1) \cdot x)\tensor y - ((g-1) \cdot y)\tensor x=0$.

If additionally $g$ acts diagonalizably, then $(g-1)\cdot v=f((g-1)\cdot v)z$ for all $v\in V$, so
\[
\theta_j(x,y)
 = -\theta_j(y,(g-1)\cdot x)+\theta_j(x,(g-1)\cdot y)
 = \theta_j((g-1)\cdot x\wedge y+x\wedge (g-1)\cdot y)
 \ ,
\]
which confirms that $\theta_j(x,y)=0$ if $(g-1)\cdot x\wedge y+x\wedge (g-1)\cdot y=0$.

Let us assume that $(g-1)$ has rank $2$. We apply $(\id\tensor p_j)$ to \eqref{eq-Jacobi-rank2} to obtain
\[ (g-1) \cdot z \wedge (g-1) \cdot u \tensor \theta_j(x,y)
 = (g-1) \cdot x \wedge (g-1) \cdot y \tensor \theta_j(z,u)
\]
for all $z,u,x,y\in V$. Since $(g-1)$ has rank $2$, we can pick $z,u$ such that $(g-1) \cdot z\wedge(g-1) \cdot u$ is non-zero. So $\theta_j(x,y)$ has to be zero if $(g-1) \cdot x\wedge(g-1)y=0$.

Hence $\theta_j$ has to vanish on the subspaces as stated for every $j\in J$. Hence
\[
 \kappa(x,y)
  =\sum_{i\not\in J} \theta_i(x,y) h^i
  =:\kappa'(x,y)
\]
on these subspaces, but $\im\kappa'\subset C_{k-1}$. We repeat the argument inductively replacing $\kappa$ by $\kappa'$ each time until $\im\kappa'\subset C_{-1}=0$.
\end{proof}

To compare this with the classical situation of $H$ being the group-algebra of a finite group, we note:

\begin{cor} \label{cor-image-of-k1} Let $\kappa:V\wedge V\to H$ be an $H$-equivariant $\FF$-linear map with the Jacobi property, and fix $g\in G(H)$ such that $\rnk(g-1)=1$ and $g$ acts diagonalizably on $V$ (which is true for instance if $g$ has finite order or $V$ is an orthogonal $H$-module). Let $r$ be the non-zero eigenvalue of $(g-1)$. Then
\[ \im\kappa_g \subset \{x\in H^1 g: gxg\inv=(r+1)x\}
\ .
\]

In particular, if $H$ is the group algebra of a finite-group, then $\kappa_g=0$ for all $g$ with $\rnk(g-1)=1$.
\end{cor}

\begin{proof} Let $v\in V$ be an eigenvector of $(g-1)$ with eigenvalue $r\in\FF\setminus\{0\}$ such that $V=\FF v\oplus\ker(g-1)$. Now $\kappa_g(x,y)=0$ for all $x,y\in \ker(g-1)$ and $\kappa_g(x,y)=0$ for all $x,y\in\FF v$, because in both cases,
\[ (g-1)\cdot x\wedge y+x\wedge (g-1)\cdot y=0
\ .
\]
Assume $x=v$ and $y\in\ker(g-1)$. Then due to $H$-equivariance,
\[ g\kappa_g(x,y)g\inv = \kappa_g(g \cdot x,g \cdot y)= (r+1)\kappa_g(x,y)
\ ,
\]
so indeed $\im\kappa_g$ lies in the subspace of $H^1 g$ on which $g$ acts by $(r+1)$.

If $H$ is the group algebra of a finite group, then $H^1 g=\FF g$, so $g$ acts trivially on $H^1 g$, but $r+1\neq 1$.
\end{proof}

\begin{defn} For every $p\geq0$ and a linear map $\kappa:V\wedge V\to H$, we define
\[ \kappa_{(p)}:=\sum_{g\in G(H),\rnk(g-1)=p,\im\kappa_g\subset K_p} \kappa_g
\ .
\]
\end{defn}

We observe that if $\kappa$ has the Jacobi property, by \Cref{image-kappa} and \Cref{rank-g}, $\kappa_{(p)}=0$ for $p>2$ and the condition $\im\kappa_g\subset K_2$ in the definition of $\kappa_{(2)}$ is redundant. We also note that if $\kappa$ has the Jacobi property, then $\kappa_{(p)}$ has the Jacobi property for every $p\geq0$ by \Cref{lem-kappa-g}, since $\kappa_{(p)}$ is a sum of $\kappa_g$'s.
 
\begin{lem} \label{kappa-0} For every $\kappa:V\wedge V\to H$ with the Jacobi property, $\kappa_{(0)}$ is of the form of \Cref{defn-kappa-std}.
\end{lem}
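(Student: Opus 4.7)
My plan is to unpack the definition of $\kappa_0$ and observe that the conclusion is almost immediate, requiring essentially no work beyond carefully identifying a map $\tau$.

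First, I would recall the definition
\[ \kappa_0 = \sum_{g \in G(H),\ \rnk(g-1)=0,\ \im \kappa_g \subset K_0} \kappa_g, \]
where the crucial point is the explicit restriction $\im \kappa_g \subset K_0$ in the indexing set. Summing over a family of maps each of which takes values in $K_0$, the map $\kappa_0 : V \wedge V \to H$ itself factors through the subspace $K_0 \subset H$. Since $V \wedge V$ is finite-dimensional, the canonical identification $\Hom(V\wedge V,K_0) \simeq (V\wedge V)^* \tensor K_0$ lets me regard $\kappa_0$ as a bona fide element
\[ \tau \in (V\wedge V)^* \tensor K_0, \]
which is precisely the type of datum needed for the first summand $\kappa_\tau$ in \eqref{eq-kappa-example}.

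Next, I would invoke the definition $\kappa_\tau(x,y) := \tau(x,y)$ to conclude that $\kappa_\tau = \kappa_0$ as maps $V\wedge V \to H$. Finally, taking the trivial choices $\sigma := 0 \in (V\wedge V)^* \tensor K_1$ and $\theta := 0 \in (V\wedge V)^* \tensor K_2$, the definitions yield $\kappa_\sigma = 0$ and $\kappa_\theta = 0$, whence
\[ \kappa_0 = \kappa_\tau + \kappa_\sigma + \kappa_\theta, \]
which is precisely the form \eqref{eq-kappa-example}.

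There is no real obstacle here: the lemma is essentially a bookkeeping statement, and the Jacobi property hypothesis is not needed in the argument (it is included for uniformity with the companion statements for $\kappa_1$ and $\kappa_2$, where proposition \ref{rank-g} actually does work). The only subtlety worth flagging is that the restriction $\im \kappa_g \subset K_0$ is indeed needed in the definition of $\kappa_0$: without it, one could have $g$ acting trivially on $V$ while $\kappa_g$ produces values like $hg$ with $h \in H^1$ whose coproduct factors fail to lie in $K'_0$, so the image would not automatically land in $K_0$. With the restriction built into the definition, however, the proof is essentially a tautology.
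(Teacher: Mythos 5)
Your proof is correct and matches the paper's approach: the paper's own proof is simply ``This is true by definition of $\kappa_0$,'' and you have carried out the same unpacking of the definition in more detail, correctly identifying that the built-in restriction $\im\kappa_g\subset K_0$ in the definition of $\kappa_0$ is what makes the conclusion immediate, and that the Jacobi hypothesis plays no role here.
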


\begin{proof} This is immediate from the definition of $\kappa_{(0)}$.
\end{proof}

\begin{prop} \label{kappa-1} Assume $G(H)$ is a torsion group (for instance, a finite group) or $V$ is an orthogonal $H$-module. Then for every $\kappa:V\wedge V\to H$ with the Jacobi property, $\kappa_{(1)}$ is of the form
\[ \kappa_{(1)}(x,y) = \sum_m
 \sigma_m(h^m_{(1)}\actson x,y)h^m_{(2)}
 +\sigma_m(x,h^m_{(1)}\actson y)h^m_{(2)}
\]
with $h^m$ in $K_1$ and $\sigma_m\in(V\wedge V)^*$ for every $m$.
In particular, it is of the form of \Cref{defn-kappa-std}.
\end{prop}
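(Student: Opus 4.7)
The plan is to reduce to a single group-like via Lemma \ref{lem-kappa-g}, and then induct on the coradical filtration of a finite-dimensional subcoalgebra of $K_1 \cap H^1 g$ containing $\im\kappa_g$. By Lemma \ref{lem-kappa-g}, it suffices to fix $g \in G(H)$ with $\rnk(g-1)=1$, $g$ diagonalizable, and (by the definition of $\kappa_1$) $\im\kappa_g \subset K_1$, and to express this $\kappa_g$ in the claimed form. Since $K_1$ is a subcoalgebra of $H$ (by the earlier Lemma) and so is $H^1 g$, the intersection $K_1 \cap H^1 g$ is a subcoalgebra; I pick a finite-dimensional subcoalgebra $C \subset K_1 \cap H^1 g$ containing $\im\kappa_g$ and let $(C_k)_k$ denote its coradical filtration, with $C_0 = \FF g$.

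The induction is on the minimal $k$ with $\im\kappa_g \subset C_k$. For the base case $k=0$, write $\kappa_g(x,y) = \theta(x,y) g$; Proposition \ref{rank-g} gives $\theta(x,y) = r^{-1}(\theta((g-1)x, y) + \theta(x, (g-1)y))$, where $(g-1)z = rz$ on the unique non-invariant eigenline of $V$, and this equation is exactly $\kappa_g = \kappa_\sigma$ for $\sigma = r^{-1}\theta \otimes g$ in the notation of \eqref{eq-kappa-example}. For the inductive step, expand $\kappa_g = \sum_i \theta_i h^i$ in a basis of $C$ compatible with the coradical filtration and single out the indices $j \in J$ with $h^j \in C_k \setminus C_{k-1}$; Proposition \ref{rank-g} gives the same functional identity on each such $\theta_j$. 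Using the Cartier--Gabriel--Kostant structure of the cocommutative pointed coalgebra $H^1 g$, one may take $h^j = u^j g$ with $u^j$ of graded degree $k$ in $\U(P(H))$, so that $\Delta h^j = h^j \otimes g + g \otimes h^j + \delta$ with $\delta \in C_{k-1} \otimes C_{k-1}$. Substituting this into
\[
  \kappa_{r^{-1}\theta_j \otimes h^j}(x,y) = r^{-1}\theta_j(h^j_{(1)} \actson x, y)\, h^j_{(2)} + r^{-1}\theta_j(x, h^j_{(1)} \actson y)\, h^j_{(2)}
\]
and invoking the $\theta_j$-identity, the principal terms combine to $\theta_j(x,y) h^j$, while the remaining $g$-component and the $\delta$-contribution both lie in $C_{k-1}$. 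Hence $\kappa_g' := \kappa_g - \sum_{j \in J} \kappa_{r^{-1}\theta_j \otimes h^j}$ has $\im\kappa_g' \subset C_{k-1}$; it retains the Jacobi property by Proposition \ref{kappa-example}, and its image sits in $K_1$ because $h^j_{(2)} \in K_1$ (as $K_1$ is a subcoalgebra), so the inductive hypothesis applies to $\kappa_g'$, completing the step.

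The main obstacle is the coradical bookkeeping: establishing the splitting $\Delta h^j = h^j \otimes g + g \otimes h^j + \delta$ for a pointed cocommutative coalgebra with one-dimensional coradical $\FF g$, and carefully tracing the substitution to confirm that every residual term indeed lies in $C_{k-1}$, so the induction proceeds without interference from the lower coefficients $\theta_i$ with $i \notin J$.
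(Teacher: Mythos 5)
Your proof is correct and follows essentially the same strategy as the paper's: reduce to a single diagonalizable group-like $g$ with $\rnk(g-1)=1$ via Lemma \ref{lem-kappa-g}, pick a finite-dimensional subcoalgebra of $H^1 g \cap K_1$ containing $\im\kappa_g$, and perform a downward induction on the coradical filtration degree, at each step using Proposition \ref{rank-g} to write the top-degree coefficients in the special form and Proposition \ref{kappa-example} to preserve the Jacobi property after subtraction. The only difference is that the paper states the cancellation of the top-degree terms of $\kappa$ and $\kappa'$ as an assertion, while you spell it out explicitly via the coproduct splitting $\Delta h^j = h^j \tensor g + g\tensor h^j + \delta$ with $\delta$ in lower filtration degree -- a useful unpacking, but not a different route.
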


\begin{proof} By \Cref{lem-kappa-g}, it is enough to show the assertion for $\kappa=\kappa_g$ for a fixed $g\in G(H)$ with $\rnk(g-1)=1$ and such that $\im\kappa_g\subset K_1$.

\newcommand{\tisigma}{{\tilde\sigma}}
We can write $\kappa=\sum_i \sigma_i h^i$ with linearly independent $h^i$ in $H^1g\cap K_1$ and suitable $\sigma_i$ in $(V\wedge V)^*$. Let $J$ be the set of indices $j$ such that $h^j$ lies in maximal degree $d$ of the coradical filtration. Since $\rnk(g-1)=1$, by \Cref{rank-g} we know that
\[ \sigma_j(x,y) = \tisigma_j((g-1) \cdot x\wedge y+x\wedge (g-1) \cdot y)
\]
for some $\tisigma_j$ in $(V\wedge V)^*$. We define
\[ \kappa'(x,y):=\sum_{j\in J} \tisigma_j(h^j_{(1)}\actson x,y)h^j_{(2)}+\tisigma_j(x,h^j_{(1)}\actson y) h^j_{(2)}
\ ,
\]
then by \Cref{kappa-example}, $\kappa'$ has the Jacobi property, so $\kappa''=\kappa-\kappa'$ has the Jacobi property, but the image of $\kappa''$ lies in degree $\leq d-1$ of the coradical filtration, because the highest degree terms of $\kappa$ and $\kappa'$ cancel. We can replace $\kappa$ by $\kappa''$ and proceed inductively until the image of $\kappa''$ lies in degree $-1$, so $\kappa''=0$.
\end{proof}

Finally, for all $g\in G(H)$ with $\rnk(g-1)=2$, let us fix
$\theta_g\in(V\wedge V)^*$ which do not vanish on the one-dimensional spaces $(g-1)V\wedge(g-1)V$.

\begin{prop} \label{kappa-2} For every $\kappa:V\wedge V\to H$ with the Jacobi property, $\kappa_{(2)}$ is of the form
\[ \kappa_{(2)}(x,y) = \sum_{g\in G(H),\rnk(g-1)=2} \theta_g(h^g_{(1)}\actson x,h^g_{(2)}\actson y) h^g_{(3)}
\]
with $h^g$ in $H^1g\cap K_2$ for every $g$.
In particular, it is of the form of \Cref{defn-kappa-std}.
\end{prop}

\begin{proof} By \Cref{lem-kappa-g}, it is enough to show this for $\kappa=\kappa_g$ for a fixed $g\in G(H)$ with $\rnk(g-1)=2$.

Since $\rnk(g-1)=2$, the restriction of any skew-symmetric bilinear form on $V$ to $(g-1)V\wedge(g-1)V$ is just a scalar multiple of the restriction of $\theta_g$.

\newcommand{\titheta}{{\tilde\theta}}
We can write $\kappa=\sum_i \theta_i k^i$ with linearly independent $k^i$ in $H^1g\cap K_2$ and suitable $\theta_i$ in $(V\wedge V)^*$. Let $J$ be the set of indices $j$ such that $k^j$ lies in maximal degree $d$ of the coradical filtration. Since $\rnk(g-1)=2$, by \Cref{rank-g} we know that
\[ \theta_j(x,y) = \titheta_j((g-1) \cdot x,(g-1) \cdot y)
 = r_j \theta_g((g-1) \cdot x,(g-1) \cdot y)
\]
for some $\titheta_j$ in $(V\wedge V)^*$ and for some $r_j\in\FF$. We define $h^j:=r_j k^j$ and
\[ \kappa'(x,y):=\sum_{j\in J} \theta_g(h^j_{(1)}\actson x,h^j_{(2)}\actson y) h^j_{(3)}
\ ,
\]
then by \Cref{kappa-example}, $\kappa'$ has the Jacobi property, so $\kappa''=\kappa-\kappa'$ has the Jacobi property, but the image of $\kappa''$ lies in degree $\leq d-1$ of the coradical filtration, because the highest degree terms of $\kappa$ and $\kappa'$ cancel. We can replace $\kappa$ by $\kappa''$ and proceed inductively until the image of $\kappa''$ lies in degree $-1$, so $\kappa''=0$. This way we see that
\[ \kappa(x,y)=\sum_{p} \theta_g(h^p_{(1)}\actson x,h^p_{(2)}\actson y) h^p_{(3)}
\]
for some $(h^p)_p$ in $H^1g\cap K_2$, but now we can define $h^g:=\sum_p h^p$ and the assertion follows.
\end{proof}

\begin{defn} \label{def-hs} Let us denote the class of Hopf--Hecke algebras $A_{H,V,\kappa}$ by $\boldsymbol{H}$ and the class of standard Hopf--Hecke algebras by $\boldsymbol{S}$ (see \Cref{def-standard}), that is, the elements of $\boldsymbol{S}$ are deformations with deformation maps $\kappa$ of the form of \Cref{defn-kappa-std}. For every PBW deformation $A=A_{H,V,\kappa}$ (even if $V$ is not an orthogonal module) we define 
\[ 
\hs(\kappa) := \kappa_{(0)} + \kappa_{(1)} +\kappa_{(2)}
\qquad\text{and}\qquad 
\hs(A_{H,V,\kappa}):=A_{H,V,\hs(\kappa)}
\ .
\]
\end{defn}

In particular, $\hs$ can be applied to a Hopf--Hecke algebra $A=A_{H,V,\kappa}$, for which the $H$-module $V$ is orthogonal.
 
\begin{prop}\label{rem-hs} $\hs:\mathbf{H}\to\mathbf{H}$ is a well-defined idempotent mapping and $\hs(\mathbf{H})\subsetneq\mathbf{S}$.
\end{prop}

\begin{proof} To summarize \Cref{kappa-example}, \Cref{kappa-0}, \Cref{kappa-1} and \Cref{kappa-2}, for every $\kappa$ with the Jacobi property, $\hs(\kappa)=\kappa_{(0)}+\kappa_{(1)}+\kappa_{(2)}$ is of the form of \Cref{defn-kappa-std} and has the Jacobi property. In other words, for every PBW deformation $A=A_{H,V,\kappa}$, the deformation $\hs(A)=A_{H,V,\kappa_{(0)}+\kappa_{(1)}+\kappa_{(2)}}$ is a standard PBW deformation. Since the orthogonality of $V$ is unaffected by $\hs$, $\hs$ sends Hopf-Hecke algebras to standard Hopf--Hecke algebras. By its definition, $\hs$ is an idempotent mapping.

It remains to see that there are standard Hopf--Hecke algebras which cannot be obtained through $\hs$ from any Hopf--Hecke algebra. We will describe such an algebra: Let us pick non-zero $y\in\CC$ and $x\in\CC^*$ such that $x(y)=1$, then $H:=\gl_1(\CC)=\CC$ acts on $V=\CC\oplus\CC^*$, where $Ix=x$ and $Iy=-y$ for $I:=1\in\gl_1(\CC)$. Now $V$ carries a non-degenerate $H$-invariant symmetric bilinear form $\tri[\cdot,\cdot]$ defined by $\tri[x,x]=\tri[y,y]=0$ and $\tri[x,y]=1$, that is, $V$ is orthogonal. Similarly, a non-degenerate $H$-invariant skew-symmetric bilinear form $\theta_1\in(V\wedge V)^*$ is defined by $\theta_1(x\wedge y)=1$. In this situation, $H$ is abelian, $I\not\in K_0$ and $I^3\in K_2$. So $\theta:=\theta_1\o I^3:V\wedge V\to K_2$ is a well-defined $H$-linear map. Thus, by \Cref{defn-kappa-std} and \Cref{kappa-example}, we have a standard PBW deformation $A=A_{H,V,\kappa_\theta}$ (which, in fact, is isomorphic tox $\U(\sl_2(\CC))$), where
\[ \kappa_\theta(x\wedge y)
= \theta_1(I^3_{(1)}\actson x, I^3_{(2)}\actson y) I^3_{(3)} 
= \theta_1(I x, I y) I
= - I
\ .
\]
In particular, $\im\kappa\not\subset K_0$. Hence, this standard PBW deformation is not obtained from $\hs$, that is, $\hs$ is not surjective onto $\boldsymbol{S}$.
\end{proof}

In \Cref{sec-inf-Cherednik-alg} we will consider infinitesimal Cherednik algebras, which generalize the (counter)\-example appearing in the proof: There, instead of $\gl_1(\CC)$, we consider $\gl_n(\CC)$ for arbitrary $n\geq1$ and the deformation map $\kappa$ takes values not only in $\gl_n(\CC)\subset\U(\gl_n(\CC))$, but in all of $\U(\gl_n(\CC))$.

\begin{rem}  It might be another interesting question which maps $\kappa$ have the Jacobi property other than the ones of the form of \Cref{defn-kappa-std}, or similarly, which PBW deformations $A=A_{H,V,\kappa}$ are not standard PBW deformations.

Regarding the first question, note that by \Cref{lem-kappa-g} and \Cref{kappa-2}, it is enough to consider the case $\kappa=\kappa_g$ for a fixed group-like $g$ with $\rnk(g-1)\in\{0,1\}$, and by the results in \Cref{kappa-0}, and \Cref{kappa-1}, an example with orthogonal $V$ extending our partial characterization would necessarily satisfy $\im\kappa_g\not\subset K_{\rnk(g-1)}$.

If $H$ is the group-algebra of a finite group, there can be no such maps, because by \Cref{cor-image-of-k1}, $\kappa_g=0$ for all $g\in G(H)$ with $\rnk(g-1)=1$ and for all $g\in G(H)$ with $\rnk(g-1)=0$, $\im\kappa_g\subset H^1g=\FF g\subset K_0$ automatically, so $\kappa=\kappa_{(0)}+\kappa_{(2)}=\hs(\kappa)$. In particular, all PBW deformations are standard in this case.
\end{rem}


\section{Dirac cohomology for Hopf--Hecke algebras}
\label{sec-Dirac}
For the convenience of the reader we would like to recall some central notions and results from \cite{Fl} which will be used in the course of this paper.

We fix a cocommutative Hopf algebra $H$, an orthogonal (finite-dimensional) $H$-module $V$ with bilinear form $\tri$ and an $H$-equivariant $\FF$-linear map $\kappa:V\wedge V\to H$ with the Jacobi property (\Cref{defn-Jacobi-property}), so $A=A_{H,V\kappa}$ is a Hopf--Hecke algebra.  Since $V$ is fixed, we use the shorthand $C$ for the Clifford algebra, which can be defined (in characteristic not $2$) as a quotient of the tensor algebra $T(V)$ by
\[ C = C(V) := T(V) / (vw+wv-2\tri[v,w])
\ .
\]

For a general Hopf--Hecke algebra $A=A_{H,V,\kappa}$, we have the following definitions and results (\xcite[Sec.~3.2]{Fl}):
\begin{defn} \label{defn-Casimir-Dirac} Let $(v_k)_k$, $(v^k)_k$ be a pair of orthogonal bases of $V$ with respect to $\tri$. Then the \emph{Casimir element} $\Omega$ and the \emph{Dirac element} $D$ are defined to be
\begin{equation} \label{eq-Casimir-Dirac}
\Omega:= \sum_k v_k v^k
\in A
\ ,
\qquad
D:= \sum_k v_k \tensor v^k
\in A\tensor C
\ .
\end{equation}
\end{defn}

\begin{lem} \label{square-of-D} The Casimir and the Dirac element are independent of the choice of dual bases, they are $H$-invariant and
\[
 D^2 = \Omega\tensor1
   + \tfrac12 \sum_{k<l} \kappa(v_k,v_l) \tensor [v^k, v^l]
\]
in $A\tensor C$, where the commutator is taken in $C$.
\end{lem}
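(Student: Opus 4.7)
The plan is to establish independence of $\Omega$ and $D$ from the chosen pair of dual bases by a linear-algebra change-of-basis argument, and then to compute $D^2$ directly from the Clifford and Hopf–Hecke relations. For independence, I would let $\tilde v_k = \sum_j a_{jk} v_j$ and compute the corresponding dual basis $\tilde v^k = \sum_j b_{jk} v^j$. The duality condition $\langle \tilde v_k, \tilde v^l\rangle = \delta_{kl}$ forces $a^T b = I$, so $b = (a^{-1})^T$; plugging this into $\sum_k \tilde v_k \tilde v^k$ (in $A$) and $\sum_k \tilde v_k \otimes \tilde v^k$ (in $A\tensor C$) collapses via $\sum_k a_{jk}b_{j'k} = \delta_{jj'}$ back to the original sums. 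This uses only bilinearity and does not require commuting $v_j$ past $v^{j'}$.

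For the formula, I would multiply out
\[
D^2 = \sum_{k,l} v_k v_l \tensor v^k v^l
\]
and split the Clifford factor using $v^k v^l = \tfrac12(v^k v^l + v^l v^k) + \tfrac12[v^k,v^l] = \langle v^k, v^l\rangle + \tfrac12[v^k,v^l]$. This decomposes $D^2$ into a part symmetric in $(k,l)$ and a part antisymmetric in $(k,l)$ (within the Clifford factor).

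For the symmetric part $\sum_{k,l} \langle v^k, v^l\rangle\, v_k v_l \tensor 1$, I would set $g^{kl} := \langle v^k, v^l\rangle$ and note that duality forces $v^k = \sum_l g^{kl} v_l$. Then $\sum_k v_k v^k = \sum_{k,l} g^{kl} v_k v_l$, so this term is precisely $\Omega \tensor 1$. For the antisymmetric part $\tfrac12 \sum_{k,l} v_k v_l \tensor [v^k, v^l]$, I would symmetrize by swapping $k\leftrightarrow l$: since $[v^k,v^l]$ is antisymmetric in $(k,l)$, averaging converts $v_k v_l$ into $\tfrac12(v_k v_l - v_l v_k) = \tfrac12 \kappa(v_k,v_l)$, using the defining relation of $A_\kappa$. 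Then the antisymmetric part becomes $\tfrac14 \sum_{k,l} \kappa(v_k, v_l)\tensor [v^k,v^l]$, which by the antisymmetry of both tensor factors equals $\tfrac12 \sum_{k<l} \kappa(v_k,v_l) \tensor [v^k,v^l]$.

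There is no real obstacle here; the argument is essentially bookkeeping. The only point deserving care is the identification of the symmetric-in-$(k,l)$ contribution with $\Omega$: one must recognize that the metric $g^{kl}$ on the dual basis is exactly the inverse Gram matrix of $(v_k)$, so that $\sum_{k,l} g^{kl} v_k v_l = \sum_k v_k v^k$, independently of whether $(v_k)$ is orthonormal.
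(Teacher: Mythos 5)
Your proof is correct and is the natural computation for this lemma; the paper itself only cites \cite{Fl} here rather than reproducing a proof, so there is no competing argument to compare against. The change-of-basis argument via $a^T b = I$ correctly handles both $\Omega$ and $D$ (since for $\Omega$ the indices $j,j'$ never cross), the split of $v^k v^l$ into symmetric and commutator parts uses exactly the Clifford convention $uw+wu=2\langle u,w\rangle$ implicit in the paper (see the computations around $\gamma(E_{ij})$ and $\gamma(\overline{vv})^2$), the identification of $\sum_{k,l}\langle v^k,v^l\rangle v_kv_l$ with $\Omega$ via $v^k=\sum_l\langle v^k,v^l\rangle v_l$ is sound, and the symmetrization step producing $\kappa$ from the defining relation $v_kv_l-v_lv_k=\kappa(v_k,v_l)$ together with the observation that both tensor factors are antisymmetric in $(k,l)$ correctly yields the factor $\tfrac12\sum_{k<l}$.
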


If $\FF=\CC$, then up to equivalence, there is a unique irreducible $C$-module if $\dim V$ is even, or two irreducible $C$-modules if $\dim V$ is odd. We fix an irreducible $C$-module $S$ and let $M$ be an $A$-module. Then $M\tensor S$ is an $A\tensor C$-module, and, in particular, the Dirac operator acts on $M\tensor S$.
\begin{defn} \label{defn-Dirac-cohomology} The \emph{Dirac cohomology of $M$} (with respect to $S$) is defined as
\[ H^D(M):=\ker D / (\im D\cap\ker D)
\ .
\]
\end{defn}

\begin{lem} \label{lem-D-diagonalizable} If $D$ acts diagonalizably on $M\tensor S$ (e.g., as a normal operator), then $H^D(M)\cong \ker D^2$.
\end{lem}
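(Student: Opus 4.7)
The plan is to exploit diagonalizability of $D$ on $N := M\tensor S$ directly, without invoking any deeper structure of $A$ or $C$. Write $D$ as a sum of its eigen-projections; equivalently, decompose $N = \bigoplus_\lambda N_\lambda$ where $N_\lambda$ is the $\lambda$-eigenspace of $D$. (If the base field is not algebraically closed, one can first extend scalars to a splitting field, verify the statement there, and then descend, since kernels and images of $\FF$-linear maps are compatible with base change.)

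I would then establish two elementary facts. First, $\ker D = \ker D^{2}$: on the eigenspace $N_\lambda$ the operator $D^{2}$ acts as the scalar $\lambda^{2}$, and since we work in characteristic zero $\lambda^{2} = 0$ is equivalent to $\lambda = 0$, so $\ker D^{2} = N_{0} = \ker D$. Second, $\ker D \cap \im D = 0$: for $\lambda \neq 0$ the operator $D$ acts invertibly on $N_\lambda$, hence $D(N_\lambda) = N_\lambda$, so
\[
\im D \;=\; \bigoplus_{\lambda \neq 0} N_\lambda,
\]
which intersects $N_{0} = \ker D$ trivially.

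Combining these two facts yields
\[
H^{D}(M) \;=\; \ker D / (\ker D \cap \im D) \;=\; \ker D \;=\; \ker D^{2},
\]
proving the claim. There is really no obstacle here: the entire content is the observation that a diagonalizable operator on a vector space admits a direct-sum decomposition $N = \ker D \oplus \im D$ and satisfies $\ker D = \ker D^{n}$ for all $n \geq 1$, which immediately collapses the Dirac cohomology quotient onto $\ker D^{2}$. The parenthetical ``e.g., as a normal operator'' simply records the most common sufficient condition in practice, e.g.\ when $M \tensor S$ carries a Hermitian form with respect to which $D$ is self-adjoint or skew-adjoint, but this plays no role in the proof itself.
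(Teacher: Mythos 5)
Your proof is correct, and it is the only reasonable argument for this statement: the paper does not reprove this lemma but simply recalls it from \cite{Fl}, and the underlying fact is elementary linear algebra. Diagonalizability gives $N = \bigoplus_\lambda N_\lambda$, from which $\ker D = N_0 = \ker D^2$ and $\im D = \bigoplus_{\lambda \neq 0} N_\lambda$, so the quotient defining $H^D(M)$ collapses to $\ker D = \ker D^2$.

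Two small remarks. First, your appeal to characteristic zero for $\lambda^2 = 0 \Leftrightarrow \lambda = 0$ is unnecessary; this holds over any field, since a field has no zero divisors. Second, the scalar-extension aside is also not needed: ``$D$ acts diagonalizably on $M\tensor S$'' means by definition that $M\tensor S$ has a basis of $D$-eigenvectors over the base field, so all eigenvalues already lie in $\FF$ and the eigenspace decomposition is available without passing to a splitting field. Neither of these affects the validity of the argument.
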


In general, the Dirac cohomology $H^D(M)$ is a module not necessarily of the Hopf algebra $H$, but of a certain Hopf algebra double cover $\tiH$ of $H$, which we call the \emph{pin cover}. In \cite{Fl} a result relating the Dirac cohomology with central characters (``Vogan's conjecture'') is proved under a certain condition regarding the square of the Dirac operator. If this condition is met, we call the Hopf--Hecke algebra a \emph{Barbasch--Sahi algebra}.

In the following we will be interested in the special case where $H=\U(\gfrak)$, the universal enveloping algebra of a finite-dimensional complex Lie algebra $\gfrak$. In particular, $H$ is pointed and $1\in H$ is the unique group-like element. This simplifies the pin cover construction and also the condition on $D^2$ significantly.

\begin{rem}\label{rem-bivectors} Let $\so(V)$ denote the Lie algebra of skew-symmetric linear operators of $V$ with respect to $\tri$ and let $\Biv(V)$ be the Lie subalgebra of the Clifford algebra $C(V)$ generated by the commutators $w_1 w_2-w_2 w_1$ in $C(V)$ for vectors $w_1,w_2\in V$. Then we have a Lie algebra isomorphism
\begin{equation}\label{eq-bivectors}
    \phi:\Biv(V)\to\so(V)\ ,\quad
    \tfrac12(w_1 w_2-w_2 w_1) \mapsto (v\mapsto -2\tri[w_1,v]w_2 + 2\tri[w_2,v]w_1)
    \ ,
\end{equation}
which can also be realized as taking commutators in $C$, leaving the subspace $V$ invariant. For more information on Clifford algebras and this isomorphism we refer to \cite{HP-book, Me}.
\end{rem}

Now we have the following concrete description of the pin cover of $H$ as constructed in \cite[Sec.~2]{Fl}:

\begin{defn} \label{defn-pin-cover-Ug} Let $\tiH:=H\oplus H$, let $\pi:\tiH\to H$ be the natural projection onto the first copy of $H$, and let $\gamma:\tiH\to C$ be the algebra map defined by $\tih\mapsto\phi\inv(\pi(\tih)\cdot)$ for all $\tih\in \gfrak\oplus H \subset\tiH$, where the element $\pi(\tih)$ of $H$ can be viewed as a skew-symmetric endomorphism of $V$, since $V$ is an orthogonal module.
\end{defn}

\begin{prop} $(\tiH,\pi,\gamma)$ is the pin cover of $H$ with respect to $V$ in the sense of \xcite[Def.~2.11]{Fl}, and it splits in the sense of \xcite[Def.~2.13]{Fl}, i.e.~the epimorphism $\pi:\tiH\to H$ splits as a Hopf algebra map.
\end{prop}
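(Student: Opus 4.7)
The plan is to instantiate the general pin cover construction of \xcite[prop.~2.10]{Fl} in the case $H=\U(\gfrak)$ and verify that the resulting structure coincides with $(\tiH,\pi,\gamma)$ as defined just above the proposition. The essential simplification here is that the group of group-likes of $\U(\gfrak)$ is trivial, so much of the general construction collapses and only the primitive part $\gfrak$ carries nontrivial information for the pin cover.

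First I would make the Hopf algebra structure on $\tiH=H\oplus H$ explicit. Since the only group-like of $H$ is $1$, the general recipe of \cite{Fl} reduces to $\tiH\cong H\tensor\FF[\ZZ_2]$, realized concretely as the direct sum of two copies of $H$, with $\pi$ the projection onto the first copy. The Clifford map $\gamma$ is defined on a primitive element $X\in\gfrak$ by $X\mapsto\phi\inv(X\cdot)$ and then extended multiplicatively. The key algebraic input is that for $X,Y\in\gfrak$ one has $\phi\inv([X,Y]\cdot)=[\phi\inv(X\cdot),\phi\inv(Y\cdot)]$ in $C$, using that $\phi:\Biv(V)\to\so(V)$ is a Lie algebra isomorphism and that primitive elements act skew-symmetrically on $V$ because $V$ is orthogonal. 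This ensures the universal property of $\U(\gfrak)$ produces a well-defined algebra morphism $\gamma$, landing in the even part $C\ieven$ since bivectors are even.

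Next I would verify the defining axioms of the pin cover from \xcite[def.~2.11]{Fl}: (i) $\tiH$ is pointed cocommutative and $\ZZ_2$-graded (with $\tiH=\tiH\ieven$ in our setting, as noted in the excerpt); (ii) $\pi$ is a surjective Hopf algebra morphism; (iii) the algebra map $\DeltaC:\tiH\to H\tensor C$, defined by $\DeltaC=(\pi\tensor\gamma)\circ\Delta_{\tiH}$, implements the required compatibility with the adjoint $H$-action on $C$; and (iv) $\gamma$ restricts correctly on primitives and group-likes. Each of these follows mechanically from the construction, using cocommutativity of $\tiH$ and the primitive structure of $\gfrak\subset H$; none of the verifications involves any content beyond what is already in the proof of \xcite[prop.~2.10]{Fl} specialized to our setting.

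Finally, for the splitting claim in the sense of \xcite[def.~2.13]{Fl}, I would exhibit the Hopf algebra section $s:H\to\tiH$ given by $h\mapsto(h,0)$, i.e., inclusion into the first summand. Because $\tiH=H\oplus H$ is a direct sum of Hopf algebras (equivalently $H\tensor\FF[\ZZ_2]$ with the trivial action of the $\ZZ_2$-factor, since $G(H)=\{1\}$), the map $s$ is a Hopf algebra morphism satisfying $\pi\circ s=\id_H$, which is precisely the splitting condition. The main obstacle is matching the present notation to the abstract definitions of \cite{Fl}; once done, each axiom is immediate from the universal property of $\U(\gfrak)$ and the triviality of $G(H)$, so no new computation is needed beyond what was already carried out in \xcite[prop.~2.10]{Fl}.
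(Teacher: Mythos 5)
Your overall strategy is the right one and is essentially what the paper does: the paper's ``proof'' is a pointer to the general pin cover construction in \xcite[sec.~2, prop.~2.10]{Fl}, and your plan is to specialize that construction to $H=\U(\gfrak)$, using that $G(H)=\{1\}$ so the group-like layer trivializes and $\gamma$ is governed on primitives by the Lie algebra isomorphism $\phi\colon\Biv(V)\to\so(V)$ together with the universal property of $\U(\gfrak)$. That much is correct.

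There is, however, a concrete slip in the splitting argument. You identify $\tiH$ with $H\tensor\FF[\ZZ_2]$ (correct) but then call it a ``direct sum of Hopf algebras'' and propose $s\colon h\mapsto(h,0)$, ``inclusion into the first summand,'' as the Hopf-algebra section of $\pi$. A direct sum $H\oplus H$ has no Hopf algebra structure in which both summands are Hopf subalgebras, since the unit must be $(1,1)$, which lies in neither. Concretely, writing $\FF[\ZZ_2]=\FF\langle 1,g\rangle$ with idempotents $e_\pm=\tfrac12(1\pm g)$, the identification $\tiH=H\tensor\FF[\ZZ_2]\cong H\oplus H$ that makes $\pi=\id_H\tensor\eps$ the ``projection onto the first copy'' is the Peirce decomposition $He_+\oplus He_-$. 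In those coordinates $(h,0)=he_+$ fails to preserve the unit ($1\mapsto e_+\neq 1$) and $He_+$ is not a subcoalgebra, so $h\mapsto(h,0)$ is not a Hopf algebra map. The correct Hopf-algebra section is the diagonal $s(h)=(h,h)$, i.e.\ $h\mapsto h\tensor 1$ under the tensor identification; this is the unique Hopf algebra embedding of $H$ compatible with $\pi=\id_H\tensor\eps$, and it is what the paper implicitly uses when it later writes $\gamma|_H(h)=\phi\inv(h\cdot)$ and $\DeltaC|_H(h)=h_{(1)}\tensor\gamma|_H(h_{(2)})$. Relatedly, extending $\gamma$ ``multiplicatively from primitives'' only produces the algebra map $\gamma|_H\colon\U(\gfrak)\to C$; to define $\gamma$ on all of $\tiH$ one must also specify its value on the group-like $g$ (the nontrivial kernel element of $\pi$), which the general construction fixes.
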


We recall (\xcite[Def.~2.5]{Fl}) that for a pointed cocommutative Hopf algebra with an orthogonal module we have an algebra $\ZZ_2$-gradation which assigns each group-like element the determinant of the corresponding operator on $V$ and each primitive element degree $1$ in $\ZZ_2\cong\{\pm1\}$. Obviously, in our setting where $H=\U(\gfrak)$, $H=H\ieven$ and $\tiH=\tiH\ieven$ irrespective of the module $V$.

We also recall the definition (\xcite[Def.~2.11]{Fl}) of the \emph{diagonal map},
\[ \DeltaC:\tiH\to H\tensor C,\qquad \tih\mapsto \pi(\tih_{(1)})\tensor \gamma(\tih_{(2)})
\ ,
\]
and of $H':=\tiH/\ker\DeltaC$. Since the pin cover splits by our construction, we have $H'\cong H$, we can consider $H$ as a Hopf subalgebra of $\tiH$ and we have algebra maps $\gamma|_H:H\to C,h\mapsto\phi\inv(h\cdot),$ and $\DeltaC|_H:H\to H\tensor C,h\mapsto h_{(1)}\tensor\gamma|_H(h_{(2)}),$ which we denote by $\gamma$, $\DeltaC$, as well (abusing notation).

We now have
\begin{lem}[{\cite[Lem.~3.9]{Fl}}] \label{lem-D-Delta-commute} $D$ and $\DeltaC(h)$ commute in $A\tensor C$ for all $h\in H$.
\end{lem}
Consequently, $H^D(M)$ is an $H$-module.

Finally, we recall that the Hopf--Hecke algebra $A$ defined by $(H,V,\kappa)$ is called a \emph{Barbasch--Sahi algebra} if $D$ satisfies the \emph{Parthasarathy condition},
\[
 D^2 \in Z(A\tensor C)+\Delta_C(\tiH\ieven)
 \ .
\]
Now with the Hopf algebra $H$ as above, this is equivalent to
\[
 D^2 \in Z(A\tensor C)+\DeltaC(H)
 \ .
\]

\begin{defn} \label{def-B} Let us denote the class of Barbasch--Sahi algebras by $\boldsymbol{B}$.
\end{defn}

The significance of this class of algebras is that in this case one has a ``good'' notion of Dirac cohomology, with consequences for the representation theory. More precisely, the non-vanishing of Dirac cohomology imposes a strong restriction on a representation, in particular its central character is uniquely determined by its Dirac cohomology.

\begin{rem} By definition, $\boldsymbol{B}\subset\boldsymbol{H}$. However, this inclusion is proper in general. For instance, the rational Cherednik algebra with parameters $t,c$ is a Hopf--Hecke algebra with $H=\CC[W]$, the group algebra of a reflection group. It is explained in \cite{EGG} that this definition is standard according to our \Cref{def-standard}. In \cite[Prop.~4.9, Rem.~4.10]{Ci}, the square of the Dirac operator is computed, and it is observed that for $t\neq 0$, it is not of the form required to make it a Barbasch--Sahi algebra.

\label{rem-hs-diag} Combining this with \Cref{rem-hs} we obtain the following diagram, where it is an open question, if the vertical inclusions are proper:
\hsdiag

\end{rem}


\section{Infinitesimal Cherednik algebras of \texorpdfstring{$\GL_n$}{GL\_n}}
\label{sec-inf-Cherednik-alg}

\subsection{Motivation}
We fix a cocommutative Hopf algebra $H$ over $\CC$ and a completely reducible $H$-module $V$. Let us recall a well-known characterization of modules with both a symmetric and a skew-symmetric form. We give the proofs for completeness.

\begin{prop} If $V$ admits both a symmetric and a skew-symmetric non-degenerate $H$-invariant bilinear form, then $V$ is of the form $V\cong W\oplus W^*$ for an $H$-module $W$.
\end{prop}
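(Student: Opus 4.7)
The plan is to encode the two forms into a single $H$-linear endomorphism and then decompose $V$ via its spectral structure.

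First I would convert the bilinear forms into $H$-module isomorphisms $\phi_\pm\colon V\to V^*$, $v\mapsto\langle v,\cdot\rangle_\pm$, which are $H$-linear thanks to $H$-invariance and bijective by non-degeneracy. Setting $\sigma := \phi_+^{-1}\phi_-\colon V\to V$, this $\sigma$ is $H$-linear, invertible, and satisfies $\langle \sigma v,w\rangle_+ = \langle v,w\rangle_-$ for all $v,w\in V$. Combining this with the symmetry of $\langle\cdot,\cdot\rangle_+$ and the skew-symmetry of $\langle\cdot,\cdot\rangle_-$, a two-line calculation shows $\langle \sigma v,w\rangle_+ = -\langle v,\sigma w\rangle_+$, i.e.\ $\sigma$ is skew with respect to the symmetric form.

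Next I would pass to the generalized eigenspace decomposition $V = \bigoplus_\lambda V_\lambda^{\mathrm{gen}}$ of $\sigma$ over $\CC$. Because $\sigma$ commutes with $H$, each $V_\lambda^{\mathrm{gen}}$ is an $H$-submodule. The skew-adjointness relation $\langle p(\sigma)v,w\rangle_+ = \langle v, p(-\sigma) w\rangle_+$ for any polynomial $p$, applied to a polynomial that cuts out the generalized $\mu$-eigenspace, shows $\langle V_\lambda^{\mathrm{gen}},V_\mu^{\mathrm{gen}}\rangle_+ = 0$ unless $\mu=-\lambda$. Since $\langle\cdot,\cdot\rangle_+$ is non-degenerate and $\sigma$ is invertible (so $0$ is not an eigenvalue), the eigenvalues organize into pairs $\{+\lambda,-\lambda\}$, and the restricted pairing $V_\lambda^{\mathrm{gen}}\times V_{-\lambda}^{\mathrm{gen}}\to\CC$ is a non-degenerate $H$-invariant pairing, giving an $H$-module isomorphism $V_{-\lambda}^{\mathrm{gen}}\cong (V_\lambda^{\mathrm{gen}})^*$.

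Finally, choosing a set $P$ of eigenvalues containing exactly one of each pair $\{\lambda,-\lambda\}$ and setting $W := \bigoplus_{\lambda\in P}V_\lambda^{\mathrm{gen}}$, one obtains $V \cong W\oplus W^*$ as $H$-modules. The main obstacle I expect is the spectral bookkeeping in the generalized-eigenspace argument — writing down the projector polynomial carefully so that the orthogonality and pairing statements are valid for non-diagonalizable $\sigma$ (which in principle can occur for skew operators with respect to a complex symmetric form). Complete reducibility of $V$ is not actually needed in this approach; it would only simplify the bookkeeping if one preferred to argue isotypic component by isotypic component.
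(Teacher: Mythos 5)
Your argument is correct and takes a genuinely different route from the paper's. The paper first invokes complete reducibility to write $V$ as a direct sum of simples grouped by isotype, then shows (via a Schur's-lemma/tensor-decomposition argument on the non-degenerate form restricted to a self-dual isotypic block $V_i^{a_i}\cong V_i\tensor\CC^{a_i}$) that the multiplicity-space factor of the form must be alternating, forcing each $a_i$ to be even; the non-self-dual simples pair off automatically. Your approach instead packages the two forms into the single $H$-linear, invertible operator $\sigma=\phi_+^{-1}\phi_-$, observes it is skew-adjoint for $\tri[\cdot,\cdot]_+$, and pairs up generalized eigenspaces $V_\lambda^{\mathrm{gen}}\leftrightarrow V_{-\lambda}^{\mathrm{gen}}$ via the restricted (hence $H$-invariant, non-degenerate) pairing; invertibility of $\sigma$ rules out $\lambda=0$, so the pairing is fixed-point-free and $W:=\bigoplus_{\lambda\in P}V_\lambda^{\mathrm{gen}}$ works. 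Your version genuinely avoids complete reducibility (it needs only that the module is finite-dimensional over $\CC$ so that the generalized eigenspace decomposition exists), so it proves a slightly stronger statement than the one as stated; it also sidesteps Schur's lemma entirely. The paper's isotypic analysis, in exchange, gives finer information about which simple constituents must come in dual pairs, which is the aspect it actually wants for the subsequent discussion of $\hfrak\oplus\hfrak^*$.
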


\begin{proof} Since $V$ is completely reducible, we can decompose $V$ as a direct sum of simple submodules, and we can group these simple submodules such that
\[ V = \bigoplus_{i=1}^k V_i^{a_i} \oplus \bigoplus_{j=1}^m W_j^{b_j} \oplus (W_j^*)^{c_j}
\]
with positive integers $(a_i)_i, (b_j)_j, (c_j)_j$ and self-dual modules $(V_i)_i$ and such that $(V_i)_i,(W_j)_j,(W_j^*)_j$ are all pairwise non-isomorphic simple $H$-modules. As $V$ admits a non-degenerate $H$-invariant bilinear form, it is self-dual, so $b_j=c_j$ for each $j$. Hence, it is enough to show that $a_i$ is even for each $i$.

Consider two simple submodules $V'$ and $V''$ of $V$ and let $\alpha$ be a non-degenerate $H$-invariant bilinear form on $V$. Then $v\mapsto \alpha(\cdot,v)$ is an $H$-linear map from $V'$ to $(V'')^*$, but since $V'$ and $V''$ are simple, the map has to be an isomorphism or $0$. Hence the restriction of $\alpha$ to $V_i^{a_i}$ has to be non-degenerate for each $i$. This means that $V_i^{a_i}$ admits both a symmetric and a skew-symmetric non-degenerate $H$-invariant bilinear form for each $i$.

We consider a fixed index $i$. Since $V_i$ is self-dual, there is an $H$-linear isomorphism $V_i\to V_i^*$ or, equivalently, a non-degenerate $H$-invariant bilinear form $\alpha$ on $V_i$. We can view $\alpha$ as the sum of a symmetric and a skew-symmetric bilinear form, and since $\alpha$ is $H$-invariant, both summands have to be $H$-invariant, as well. Since $V_i$ is simple, the space of $H$-linear endomorphisms, equivalently, $H$-invariant bilinear forms is one-dimensional. Hence $\alpha$ has to be symmetric (case a) or skew-symmetric (case b).

We write $V_i^{a_i}=V_i\tensor \CC^{a_i}$ and we pick a basis $(e_p)_{1\leq p\leq a_i}$ of $\CC^{a_i}$. Let $\beta$ be a non-degenerate $H$-invariant skew-symmetric (case a) or symmetric (case b) bilinear form on $V_i^{a_i}$. Now for every $1\leq p,q\leq a_i$, the map $(v,v')\mapsto \beta(v\tensor e_p,v'\tensor e_q)$ is an $H$-invariant bilinear form on $V_i$, so it has to be a multiple of $\alpha$. Hence $\beta(v\tensor e_p,v'\tensor e_q)=\gamma(e_p,e_q) \alpha(v,v')$ for scalars $(\gamma(e_p,e_q))_{p,q}$, which defines a bilinear form $\gamma$ on $\CC^{a_i}$. For $\beta$ to be skew-symmetric (case a) or symmetric (case b), $\gamma$ has to be skew-symmetric. Now if $a_i$ is odd, $\gamma$ cannot be non-degenerate, so there is a vector $e\in\CC^n$ such that $\gamma(e,e')=0$ for all $e'\in\CC^n$, and consequently, $\beta(v\tensor e,v'\tensor e')=0$ for all $v,v'\in V_i$ and $e'\in\CC^n$. This is a contradiction, since $\beta$ was assumed to be non-degenerate. Hence $a_i$ has to be even, which was to be shown.
\end{proof}

\begin{prop} \label{modules-with-two-forms} The completely reducible finite-dimensional $H$-modules $V$ which admit both a symmetric and a skew-symmetric non-degenerate $H$-invariant bilinear form are exactly the $H$-modules of the form $V\cong W\oplus W^*$ for finite-dimensional $H$-modules $W$.
\end{prop}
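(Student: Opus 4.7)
The plan is to observe that this proposition strengthens the preceding one into an ``if and only if'' statement. The ``only if'' direction is precisely the content of the preceding proposition, so no further argument is needed in that direction. The real task is the ``if'' direction: given any finite-dimensional $H$-module $W$, one must exhibit on $V:=W\oplus W^*$ both a non-degenerate symmetric and a non-degenerate skew-symmetric $H$-invariant bilinear form.

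The natural candidates are built from the evaluation pairing $\mathrm{ev}:W^*\tensor W\to\CC$, $\phi\tensor w\mapsto \phi(w)$. For $w_1,w_2\in W$ and $\phi_1,\phi_2\in W^*$ I would define
\[
 \alpha_\pm(w_1+\phi_1,\,w_2+\phi_2) := \phi_1(w_2)\pm\phi_2(w_1)
 \ .
\]
By inspection $\alpha_+$ is symmetric, $\alpha_-$ is skew-symmetric, and both are bilinear. Non-degeneracy is immediate from non-degeneracy of $\mathrm{ev}$: if $w+\phi\ne 0$ with $w\ne 0$ choose $\phi'\in W^*$ with $\phi'(w)\ne 0$, while if $\phi\ne 0$ choose $w'\in W$ with $\phi(w')\ne 0$; in either case one of $\alpha_\pm(w+\phi,\,w'+\phi')$ is non-zero (after adjusting by the other summand, which can be absorbed into the choice).

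Finally, $H$-invariance of $\alpha_\pm$ reduces to $H$-invariance of $\mathrm{ev}$, which holds because by the definition of the dual module $(h\cdot\phi)(w)=\phi(Sh\cdot w)$, so that
\[
 (h_{(1)}\cdot\phi)(h_{(2)}\cdot w) = \phi(S(h_{(1)})h_{(2)}\cdot w) = \eps(h)\phi(w)
 \ ,
\]
which is precisely the $H$-invariance condition used throughout the paper. The substantive content of the proposition is concentrated in the converse direction handled by the preceding result; this direction is essentially a formal construction, so no serious obstacle arises.
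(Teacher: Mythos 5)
Your proposal is correct and follows essentially the same route as the paper: both note that the ``only if'' direction is the preceding proposition, and both prove the ``if'' direction by building a symmetric and a skew-symmetric form on $W\oplus W^*$ from the natural $H$-invariant evaluation pairing $W^*\tensor W\to\CC$, then verifying non-degeneracy and invariance by inspection.
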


\begin{proof} It only remains to show that modules of the form $W\oplus W^*$ admit forms as required. Let $(\cdot,\cdot):W^*\tensor W\to\CC$ be the natural pairing. By definition of the contragredient action of $H$ on $W^*$, the pairing is $H$-invariant. We define the forms $\alpha,\beta$ by
\[ \alpha(y+x,y'+x'):=(y,x')+(y',x)
\ ,\qquad
 \beta(y+x,y'+x'):=(y,x')-(y',x)
 \ .
\]
Then since $(\cdot,\cdot)$ is $H$-invariant, $\alpha$ and $\beta$ are $H$-invariant. By definition, they are non-degenerate, bilinear and also symmetric and skew-symmetric, respectively.
\end{proof}

\begin{rem} One might want to look for Hopf--Hecke algebras constructed from completely reducible orthogonal $H$-modules $V$ with a non-degenerate $H$-invariant skew-symmetric bilinear form. Then \Cref{modules-with-two-forms} tells us that these modules are exactly the ones of the form $W\oplus W^*$.

Now if we take $H$ to be the universal enveloping algebra of the Lie algebra of a reductive algebraic group, a class of such Hopf--Hecke algebras called \emph{infinitesimal Cherednik algebras} is defined in \cite{EGG}.
\end{rem}

\begin{rem} The infinitesimal Hecke algebras of $\mathsf{Sp}_{2n}$ with the standard module $V=\CC^{2n}$ classified in \cite[Sec.~4.1.2]{EGG} and studied in \cite{Kh-symp,TK,DT,LT} are not Hopf--Hecke algebras, since the module does not have a non-degenerate invariant symmetric form; this follows from the above discussion, for instance, because we saw that a simple module cannot have a symmetric and a skew-symmetric non-degenerate invariant form at the same time. 

However, the infinitesimal Hecke algebras of the orthogonal groups $\mathsf{O}_n$ (\cite{EGG,Ts}) and the infinitesimal Cherednik algebras $\GL_n$ are Hopf--Hecke algebras. In the following, we will study the Dirac cohomology of the infinitesimal Cherednik algebras for the groups $\GL_n$, whose representation theory has been investigated in \cite{EGG,DT,LT,Ti} (see \Cref{rem-inf-cher-gln}). It seems a promising approach to use the Dirac operator to explore the somewhat less known representation theory of infinitesimal Hecke algebras of the orthogonal groups, which we hope to pursue in a future project.
\end{rem}

\subsection{Infinitesimal Cherednik algebras of \texorpdfstring{$\GL_n$}{GL\_n} as Hopf--Hecke algebras}
\newcommand{\vbar}{{\overline{v}}}
\newcommand{\dv}{{\,dv}}
\newcommand{\vvbar}{{v\tensor\vbar}}
\newcommand{\intv}{\int_{|v|=1}}

For a fixed $n\geq 1$ and with $\FF=\CC$ we consider the general linear group $G=\GL_n(\CC)$, its Lie algebra $\gfrak=\gl_n(\CC)$ and its universal enveloping algebra $H=\U(\gfrak)$. We consider the standard Lie algebra (and hence $H$-)module $\h=\CC^n$. We define the $H$-module $V:=\h\oplus \h^*$, where $\h^*$ is the usual contragredient module, and we denote the pairing of $\h^*$ and $\h$ by $(\cdot,\cdot)$.

The following definitions are from \cite{EGG}:
\begin{defn} \label{def-inf-hecke-algebra-gln} For all $m\geq 0$, $x\in\h^*$ and $y\in\h$, let $r_m(x,y)$ be the coefficient of $\tau^m$ in the expansion of the polynomial function $A\mapsto (x,(1-\tau A)\inv \cdot y)\det(1-\tau A)\inv$ in $S(\gl_n^*)$ viewed as an element in $S(\gl_n^*)\simeq S(\gl_n)\simeq\U(\gl_n)$, where the first identification is via the trace pairing $\gl_n\times\gl_n\to \CC, (A,B)\mapsto \Tr(AB)$ and the second identification is via the symmetrization map.

Let $\xi(z)=\sum_{m\geq0} \xi_m z^m$ be a polynomial. We define a map $\kappa=\kappa_\xi:V\wedge V\to H$ by
\begin{equation} \label{eq-kappa-series}
\kappa(x,x')=\kappa(y,y')=0
\ ,\quad
\kappa(y,x) := \sum_{m\geq 0} \xi_m r_m(x,y)
\tforall x,x'\in\h^*,y,y'\in\h
\ .
\end{equation}
Let $I_\kappa$ be the ideal of $T(V)\rtimes H$ generated by elements of the form $vw-wv-\kappa(v,w)$ for $v,w\in V$. The algebra
\[ \Hcal_\xi := (T(V)\rtimes H)/I_\kappa
\]
is called \emph{infinitesimal Cherednik algebra}.
\end{defn}

There is an alternative definition of $\kappa$ in terms of $\xi$ as explained in \cite[Sec.~4.2]{EGG} (see also \cite[Sec.~3.1]{DT}):
\begin{defn}\label{defn-f} 
Let $\tixi$ be the polynomial
\begin{equation}
 \tixi(z) := \frac{1}{2\pi^n} \partial^n(z^n \xi(z))
  = \sum_{m\geq0} \frac{1}{2\pi^n} \frac{(m+n)!}{m!} \xi_m z^m
\ .
\end{equation}
Also, we define the notations $\tri[v, w]_H:=v^T\overline{w}$, which is an Hermitian inner product on $\h$, and $|v|:=(\sum_i |v_i|^2)^{1/2}$ for all $v\in\h$, the Euclidean norm.

For every non-zero $v\in\h$, let $\vvbar$ denote the rank-one endomorphism $v\tri[\cdot,v]_H$ of $\h$ viewed as an element in $\gl_n$, so $\tixi(\vvbar)$ can be viewed as an element in $S(\gl_n)$ or $\U(\gl_n)$.
\end{defn}

We need the following lemma which is essentially contained in {\cite[Sec.~4.2]{EGG}}.

\begin{lem} With the definitions as above,
\begin{equation} \label{eq-kappa-int}
\kappa(y,x)=\intv (x,(\vvbar) \cdot y) \tixi(\vvbar) \dv
\tforall x\in\h^*, y\in\h
\ .
\end{equation}
\end{lem}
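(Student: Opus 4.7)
The plan is to match coefficients of $\xi_m$ on the two sides of the asserted identity. Using the expansion of $\tixi$ from \ref{defn-f} and the identity $(x,\vvbar\cdot y)=(x,v)\tri[y,v]_H$, the claim reduces to the coefficient-wise statement
\[
r_m(x,y) = \frac{(m+n)!}{2\pi^n\, m!}\intv (x,v)\tri[y,v]_H(\vvbar)^m\dv
\]
in $S^m(\gl_n)\subset\U(\gl_n)$ for each $m\geq 0$, where $(\vvbar)^m$ is the $m$-th symmetric power of $\vvbar$ regarded as an element of $\gl_n$.

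Next I would unfold both sides explicitly. On the left, the geometric expansions $(1-\tau A)^{-1} = \sum_{k\geq 0}\tau^k A^k$ and $\det(1-\tau A)^{-1} = \sum_{l\geq 0}\tau^l h_l(A)$, with $h_l$ the $l$-th complete homogeneous symmetric polynomial in the eigenvalues of $A$, give
\[
r_m(x,y) = \sum_{k+l=m}(x,A^k y)\,h_l(A) \in S^m(\gl_n^*),
\]
which I would then transport into $S^m(\gl_n)\subset\U(\gl_n)$ via the trace pairing $(A,B)\mapsto\Tr(AB)$ and symmetrization. On the right I would apply the standard moment formula for the $U(n)$-invariant surface measure on $S^{2n-1}\subset\CC^n$, which expresses
\[
\intv v_{i_1}\cdots v_{i_{m+1}}\bar v_{j_1}\cdots\bar v_{j_{m+1}}\dv
\]
as $\frac{2\pi^n\, m!}{(m+n)!\,(m+1)!}\sum_{\sigma\in S_{m+1}}\prod_p\delta_{i_p j_{\sigma(p)}}$.

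The crux is then to reconcile these two symmetric tensors. Partitioning each permutation $\sigma$ by the cycle containing the distinguished index paired with $x$, each $\sigma$ contributes a factor $(x,A^k y)$ from that cycle (of length $k+1$) together with a product of traces $\Tr(A^{a_1})\cdots\Tr(A^{a_s})$ over the remaining cycles; summing over all cycle types of the complement reproduces $h_l(A)$ via the standard identity expressing complete homogeneous symmetric polynomials as weighted sums over cycle types, and the normalization $\frac{(m+n)!}{2\pi^n\, m!}$ built into $\tixi$ is calibrated exactly to absorb both the volume of $S^{2n-1}$ and the $\frac{1}{(m+1)!}$ arising from symmetrizing over the distinguished position. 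The main obstacle I anticipate is this combinatorial bookkeeping together with the careful tracking of the trace-pairing identification $S^m(\gl_n^*)\cong S^m(\gl_n)$; since the identity in question is exactly \cite[4.2]{EGG}, I would ultimately invoke their argument to close the accounting.
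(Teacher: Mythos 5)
Your approach is valid but differs from the paper's (and from \cite[4.2]{EGG}, which the paper follows). You propose a direct computation: expand $(x,(1-\tau A)^{-1}y)\det(1-\tau A)^{-1}$ to get $r_m=\sum_{k+l=m}(x,A^k y)h_l(A)$, expand the integrand into monomials in $v,\bar v$, invoke the moment formula for the $U(n)$-invariant measure on the sphere, and match the two symmetric tensors by a cycle-type analysis of the permutations appearing in the moment formula. The paper instead goes through the auxiliary function $F_m(A)=\intv\tri[A\cdot v,v]_H^{m+1}\dv$: it cites \cite{EGG} for the closed form of $F_m$ as a coefficient of $\det(1-\tau A)^{-1}$, and then observes that the integral $\intv(x,(\vvbar)\cdot y)(\vvbar)^m\dv$ is precisely $\tfrac{1}{m+1}\,dF_m|_A(y\tensor x)$, so the target identity falls out by differentiating a single scalar generating function. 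The differentiation trick avoids the permutation/cycle bookkeeping entirely, since the derivative of $\tri[Av,v]_H^{m+1}$ in the direction $y\tensor x$ produces the $(x,(\vvbar)\cdot y)$ factor automatically; your route makes the same combinatorics visible but at the cost of having to verify the moment normalization and the identity expressing $h_l$ as a weighted sum over cycle types. Both arguments are essentially repackagings of the same computation in \cite{EGG}, which you correctly recognize as the ultimate source, but the $F_m$-differentiation route is the shorter path and is the one the paper takes.
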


\begin{proof} We recall results from \cite[Sec.~4.2]{EGG}: Let $F_m\in S(\gfrak^*)$ be defined by
\[
 F_m(A):=\intv \tri[A \cdot v,v]_H^{m+1} \dv
 \tforall A\in\gl_n
 \ .
\]
According to the computations in \cite[Sec.~4.2]{EGG}, $F_m(A)$ equals the coefficient of $\tau^{m+1}$ in
\[ 2\pi^n \frac{(m+1)!}{(m+n)!} \det(1-\tau A)\inv
\ .
\]
As explained in \cite[Sec.~4.2]{EGG}, under the identification $S(\gfrak)\simeq S(\gfrak^*)$,
\begin{align*}
\intv (x,(\vvbar) \cdot y) (\vvbar)^m \dv
 &= \intv (x,(\vvbar) \cdot y) \tri[A \cdot v,v]_H^m \dv \\
 &= \frac{1}{m+1} dF_m|_A(y\tensor x)
 = 2\pi^n \frac{m!}{(m+n)!} r_m
\end{align*}
where $A\in\gfrak$ symbolizes the argument of a polynomial function in $S(\gfrak^*)$, and where $r_m$ is the coefficient of $\tau^m$ in $(x,(1-\tau A)\inv \cdot y)\det(1-\tau A)\inv$.

Now if we write $\tixi(z)=\sum_{m\geq 0} \tixi_m z^m$, then by definition, $\tixi_m = \frac{1}{2\pi^n} \frac{(m+n)!}{m!} \xi_m$ for all $m\geq 0$, so
\[
 \intv (x,(\vvbar) \cdot y) \tixi(\vvbar) \dv
 = \sum_{m\geq 0} 2\pi^n \frac{m!}{(m+n)!} \tixi_m r_m(x,y)
 = \sum_{m\geq 0} \xi_m r_m(x,y)
 \ .
\]
\end{proof}

\begin{rem}\label{rem-inf-cher-gln} In fact, \cite[Thm.~4.2]{EGG} says that $(T(V)\rtimes H)/I_\kappa$ has the PBW property if and only if $\kappa$ is of the described form (for some polynomial $\xi$). Since we will see that $V$ is an orthogonal $\U(\gl_n)$-module, the Hopf--Hecke algebras $A_{H,V,\kappa}$ with $H=\U(\gl_n)$ and $V=\h\oplus\h^*$ are exactly the infinitesimal Cherednik algebras.

We also note that the presentation of infinitesimal Cherednik algebras is in ``reverse order'' here: In \cite{EGG}, it is first explained that infinitesimal Cherednik algebras for a reductive algebraic group $G$ over $\CC$ are parametrized by $G$-invariant distributions on the closed subscheme of ``complex reflections'' $\Phi\subset G$ defined by $\wedge^2(1-g|_\h)=0$ which are supported at $1$. It is shown that for $G=\GL_n$, those distributions are parametrized by polynomials. The relation between the polynomials and the resulting deformations is computed to be \eqref{eq-kappa-int}. After evaluating the integral, the equivalent formulation \eqref{eq-kappa-series} is given.

The center of these algebras has been shown to be a polynomial algebra in $n$ variables in \cite{Ti}. Their representation theory has been studied and, in particular, their finite-dimensional irreducible modules have been classified in \cite{DT}. Universal infinitesimal Cherednik algebras, which are the analogs of infinitesimal Cherednik algebras with $\xi_1,\dots,\xi_n$ viewed as formal parameters, have been identified with $W$-algebras of the same type and a $1$-block nilpotent element in \cite{LT}.
\end{rem}

We want to see that $\Hcal_\xi$ is a Hopf--Hecke algebra in our notation and we want to find a description of $D^2$.

\begin{defn} \label{defn-tri-v} Let $(\cdot,\cdot):\h^*\tensor\h\to\CC$ be the natural pairing, which is $\gfrak$-invariant. We define a form $\tri$ on $V$ by
\[ \tri[x+y,x'+y']:=(x,y')+(x',y)
\tforall x,x'\in\h^*, y,y'\in\h
\ .
\]

We pick dual bases $(x_i)_i$, $(y_i)_i$ of $\h^*$ and $\h$, respectively, and we define
\[ (v_k)_k := (x_1,\dots,x_n,y_1,\dots,y_n),
\qquad
  (v^k)_k := (y_1,\dots,y_n,x_1,\dots,x_n)
\ .
\]
\end{defn}

\begin{lem} In the situation as in the definition, $\tri$ is a symmetric $\gfrak$-invariant bilinear form on $V$, i.e., $V$ is an orthogonal $H$-module and $\Hcal_\xi$ is a Hopf--Hecke algebra, and $(v_k)_k$, $(v^k)_k$ is a pair of dual bases for $V$ with respect to $\tri$.
\end{lem}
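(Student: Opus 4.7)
The plan is to verify each of the four assertions (symmetry, bilinearity, $\gfrak$-invariance, non-degeneracy) directly from the definitions, then to check that $(v_k)_k,(v^k)_k$ are dual with respect to $\tri$, and finally to conclude the Hopf-Hecke property by combining orthogonality with the PBW result for $\Hcal_\xi$ from \cite{EGG}.

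First I would observe that bilinearity and symmetry are immediate from the corresponding properties of the natural pairing: $\tri[x+y,x'+y']=(x,y')+(x',y)=(x',y)+(x,y')=\tri[x'+y',x+y]$. For $\gfrak$-invariance, since $H=\U(\gfrak)$ is generated as an algebra by $1$ and $\gfrak$, and $\gfrak$ consists of primitive elements, the $H$-invariance condition reduces to checking that $\tri[g\cdot v,w]+\tri[v,g\cdot w]=0$ for all $g\in\gfrak$ and $v,w\in V$. Writing $v=x+y$ and $w=x'+y'$ and using that the natural pairing satisfies $(g\cdot x,y)+(x,g\cdot y)=0$ by definition of the contragredient action, the four cross-terms cancel pairwise:
\[
(g\cdot x,y')+(x,g\cdot y')=0,\qquad (g\cdot x',y)+(x',g\cdot y)=0.
\]

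Next, for non-degeneracy, suppose $\tri[x+y,x'+y']=0$ for all $x'\in\hfrak^*$, $y'\in\hfrak$. Setting $x'=0$ gives $(x,y')=0$ for all $y'\in\hfrak$, hence $x=0$; setting $y'=0$ gives $(x',y)=0$ for all $x'\in\hfrak^*$, hence $y=0$. This establishes that $V$ is an orthogonal $H$-module. The duality of $(v_k)_k$ and $(v^k)_k$ with respect to $\tri$ is a direct computation: for $1\leq k\leq n$, $v_k=x_k$ and $v^k=y_k$, so $\tri[v_k,v^l]=(x_k,y_l)=\delta_{kl}$ for $l\leq n$, while $\tri[v_k,v^l]=(x_k,0)+(x_{l-n},0)=0$ for $l>n$; the cases $k>n$ are symmetric.

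Finally, to conclude that $\Hcal_\xi$ is a Hopf-Hecke algebra, I would invoke the definition: we have just shown $V$ is orthogonal, and by \cite[thm.~4.2]{EGG} the algebra $\Hcal_\xi=(T(V)\rtimes H)/I_\kappa$ has the PBW property for $\kappa=\kappa_\xi$ of the given form, so all defining conditions are satisfied. There is no real obstacle here; the verification is essentially bookkeeping, and the only point requiring any care is making sure the $H$-invariance check is reduced correctly to the primitive generators of $\U(\gfrak)$.
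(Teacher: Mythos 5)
Your proof is correct and follows essentially the same route as the paper: the paper simply cites its earlier Proposition \ref{modules-with-two-forms} (where $H$-invariance of the natural pairing and the construction of the symmetric form on $W\oplus W^*$ were established) for orthogonality, and cites the construction for the PBW property, whereas you spell out the bookkeeping that is implicit in that reference. The only thing to be careful about in your reduction of $H$-invariance to primitive generators is to use the form $\tri[h\cdot v,w]=\tri[v,Sh\cdot w]$ (which with $Sg=-g$ gives exactly your skew-symmetry condition), since the paper's other phrasing $\tri[h_{(1)}\cdot v,h_{(2)}\cdot w]=\tri[v,w]$ should read $=\eps(h)\tri[v,w]$ to match.
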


\begin{proof} $\tri$ makes $V$ an orthogonal $H$-module with the described pair of dual bases, because the natural pairing $(\cdot,\cdot)$ is $\gfrak$-invariant, as we have seen in \Cref{modules-with-two-forms} already.

By construction, $\Hcal_\xi$ has the PBW property, so it is a Hopf--Hecke algebra.
\end{proof}

Recall from the discussion in \Cref{sec-Dirac} that we can associate a pin cover to the Hopf--Hecke algebra $\Hcal_\xi$ which, as $H=\U(\gl_n)$, splits and is hence completely described by an algebra map $\gamma:H\to C$. To make this more concrete:

\begin{prop}\label{gamma-Eij} The pin cover $\tiH$ of $H$ splits, so $H'\cong H$ as algebras, we can identify $H$ with a Hopf subalgebra of $\tiH$ and $\gamma:\tiH\to C$, $\DeltaC:\tiH\to H\tensor C$ restrict to algebra maps $\gamma:H\to C$ and $\DeltaC=(\id_H\o\gamma)\circ\Delta:H\to H\tensor C$ (abusing notation). Furthermore, $\gamma(E_{ij})=\frac14(y_ix_j-x_j y_i)\in C$ for the elementary matrix $E_{ij}$ in $\gl(\h)\simeq \gl_n$ which sends $y_j$ to $y_i$, and the $\gl_n$-action on $C=C(V)$ via $\gamma$ coincides with the action induced from the action on the tensor algebra $T(V)$.
\end{prop}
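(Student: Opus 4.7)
My plan is to handle the splitting assertion first, which I expect to be essentially immediate, and then focus on the explicit computation of $\gamma(E_{ij})$, which is the substantive part.

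For the splitting: since $H=\U(\gl_n)$ is the universal enveloping algebra of a Lie algebra, the general proposition recalled immediately above applies verbatim, so $(\tiH,\pi,\gamma)$ splits in the sense of \xcite[def.~2.13]{Fl}. In particular $H'\cong H$, $H$ may be identified with a Hopf subalgebra of $\tiH$, and the restrictions of $\gamma$ and $\DeltaC$ to $H$ are the algebra maps described in the discussion preceding the proposition.

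For the formula $\gamma(E_{ij})=\tfrac14(y_ix_j-x_jy_i)$, I would use the definition $\gamma(h)=\phi\inv(h\cdot)$: it suffices to produce a bivector $B\in\Biv(V)\subset C$ whose commutator action on $V\subset C$ coincides with the action of $E_{ij}$ on $V$, for then $\gamma(E_{ij})$ must equal $B$. First I would record the action on the generators of $V$: on $\hfrak$, $E_{ij}\cdot y_k=\delta_{jk}y_i$ by the defining property of $E_{ij}$, and on $\hfrak^*$ the contragredient Lie algebra action gives $E_{ij}\cdot x_k=-\delta_{ik}x_j$. Next I would derive, from the Clifford relation $wv+vw=2\tri[w,v]$ applied twice, the standard commutator identity
\[
[w_1w_2,v]=2\tri[w_2,v]w_1-2\tri[w_1,v]w_2
\]
valid for all $v,w_1,w_2\in V$. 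Setting $B:=\tfrac14(y_ix_j-x_jy_i)$ and noting $[x_jy_i,v]=-[y_ix_j,v]$, this identity yields
\[
[B,v]=\tri[x_j,v]y_i-\tri[y_i,v]x_j.
\]

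Finally, I would evaluate the right-hand side at $v=y_k$ and at $v=x_k$ using $\tri[x_j,y_k]=\delta_{jk}$, $\tri[y_i,x_k]=\delta_{ik}$ and $\tri[x_j,x_k]=\tri[y_i,y_k]=0$. The two cases recover $\delta_{jk}y_i=E_{ij}\cdot y_k$ and $-\delta_{ik}x_j=E_{ij}\cdot x_k$, respectively, so $[B,\cdot]$ and $E_{ij}\cdot$ agree on all of $V$, and therefore $\gamma(E_{ij})=\phi\inv(E_{ij}|_V)=B$ as claimed. The only points requiring care are the sign in the contragredient action on $\hfrak^*$ and the factor $\tfrac14$, which arises from combining the factor $2$ in the commutator identity with the factor $2$ coming from $[y_ix_j-x_jy_i,v]=2[y_ix_j,v]$; these are the sort of bookkeeping issues that have to be executed carefully but present no conceptual obstacle.
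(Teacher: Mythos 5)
Your proof is correct and takes essentially the same route as the paper: both reduce to showing that the Clifford commutator $[\tfrac14(y_ix_j-x_jy_i),\,\cdot\,]$ on $V$ reproduces the $E_{ij}$-action on $\hfrak\oplus\hfrak^*$, and both then verify this by direct computation with the Clifford relations and the form $\tri$. The only cosmetic difference is that you first isolate the general identity $[w_1w_2,v]=2\tri[w_2,v]w_1-2\tri[w_1,v]w_2$ and specialize it, whereas the paper computes the two relevant brackets $[y_ix_j,y_k]$ and $[y_ix_j,x_k]$ (and their counterparts with $-x_jy_i$) directly; the bookkeeping on the sign in the contragredient action and the factor $\tfrac14$ is handled correctly in both.
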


\begin{proof} As discussed in \Cref{sec-Dirac}, the pin cover splits, because $H$ is the universal enveloping algebra of a Lie algebra. Consequently, $H$ can be identified with a Hopf subalgebra of $\tiH$, $H'\cong H$, and we have the restricted algebra maps as asserted.

To verify $\gamma(E_{ij})=\tfrac14(y_i x_j-x_j y_i)$, we realize that the action of $E_{ij}$ on $V=\h\oplus\h^*$ can be expressed as
\[ \tri[x_j,\cdot] y_i - \tri[y_i,\cdot] x_j
\ ,
\]
a skew-symmetric operator on $V$. Now $\gamma(E_{ij})$ is given as the image of this operator under $\phi\inv$ in $\Biv\subset C$ (see \Cref{rem-bivectors}), which is just $\tfrac14(y_i x_j - x_j y_i)$, as desired.

We can verify that the $\gl_n$-action on $C$ which we obtain through this algebra map coincides with the $\gl_n$-action which is induced by the $\gl_n$-action on the tensor algebra. 
\end{proof}

We recall the definitions of the Casimir element $\Omega=\sum_k v_k v^k$ in $A=\Hcal_\xi$ and of the Dirac element $D=\sum_k v_k\tensor v^k$ in $A\tensor C$ (\Cref{defn-Casimir-Dirac}) for any pair of dual bases $(v_k)_k$ and $(v^k)_k$, so, in particular, for the choice made in \Cref{defn-tri-v}.

\begin{lem} Let $D\in A\tensor C$ be the Dirac element for $A=\Hcal_\xi$. Then
\begin{equation} \label{formula-D2-integral}
 D^2 = \Omega\tensor 1
  - 2\intv \tixi(\vvbar)\tensor \gamma(\vvbar) \dv
\ .
\end{equation}
\end{lem}

\begin{proof} We invoke \Cref{square-of-D} to obtain
\begin{align*}
D^2
 &= \Omega\tensor1
 + \tfrac12 \sum_{k<l} \kappa(v_k,v_l) \tensor [v^k,v^l]
 = \Omega\tensor1
 + \tfrac12 \sum_{i,j} \kappa(y_j,x_i) \tensor [x_j,y_i] \\
 &= \Omega\tensor1
 - 2 \sum_{i,j} \kappa(y_j,x_i) \tensor \gamma(E_{ij})
\ ,
\end{align*}
where $E_{ij}=y_i\tensor x_j$ as above is an element in $\gl_n$ for all $i,j$. Using the integral formula \eqref{eq-kappa-int} for $\kappa$, we obtain
\begin{align*}
 \sum_{i,j} \kappa(y_j,x_i)\tensor\gamma(E_{ij})
 &= \sum_{i,j} \intv \tixi(\vvbar)\tensor (x_i,(\vvbar)y_j)\gamma(E_{ij}) \dv \\
 &= \intv \tixi(\vvbar)\tensor \gamma(\vvbar) \dv
\ ,
\end{align*}
as desired.
\end{proof}

In the following, we want to find an even more explicit expression for $D^2$ in terms of polynomials derived from $\tixi$, which will allow us to prove that $D$ satisfies the Parthasarathy condition and hence $\Hcal_\xi$ is a Barbasch--Sahi algebra. We need some auxiliary lemmas.

From now on, all polynomials are univariate with complex coefficients unless otherwise stated.

\begin{defn}\label{def-nabla-eps} For any $\eps\in\CC$, we define $\nabla_\eps$, a difference operator on polynomials, by
\[ \nabla_\eps f(z) := f(z+\eps)-f(z+\eps-1)
\ .
\]
For $k\geq 0$, let $B_k(z)$ be the $k$-th \emph{Bernoulli polynomial} defined by the generating series
\[ \sum_{k\geq0} B_k(z) \frac{t^k}{k!} = \frac{te^{tz}}{e^t-1}
\]
(\cite[Eq.~23.1.1]{AbS}). We recall that $B_k$ satisfies $\nabla_1 B_k(z) = B_k(z+1)-B_k(z) = k z^{k-1}$ (\cite[Eq.~23.1.6]{AbS}).

\end{defn}

\begin{lem} \label{lem-Bernoulli} Let $p$ be a polynomial and $\eps\in\CC$. Then there is a polynomial $f$ satisfying $\nabla_\eps f(z)=p(z)$ and $f$ is characterized by this relation uniquely up to the constant term.
\end{lem}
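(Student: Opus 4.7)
The plan is to view $\nabla_\eps$ as a linear endomorphism of $\FF[z]$ and establish the statement purely from degree considerations, plus the classical recursion for Bernoulli polynomials for existence.

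First I would observe that $\nabla_\eps$ lowers the degree by exactly one: if $f(z) = a z^d + \text{(lower order)}$ with $a\neq0$ and $d\geq 1$, then the binomial expansion of $(z+\eps)^d - (z+\eps-1)^d$ shows $\nabla_\eps f(z) = ad\, z^{d-1} + \text{(lower order)}$, while $\nabla_\eps$ vanishes on constants. In particular $\nabla_\eps$ restricts to a linear map $\FF[z]_{\leq d+1} \to \FF[z]_{\leq d}$ for every $d\geq 0$. Uniqueness up to a constant is then immediate: if $\nabla_\eps f=0$, then $f(z+\eps)=f(z+\eps-1)$ as polynomials, so $f$ is invariant under the shift $z\mapsto z-1$; but any polynomial invariant under a nontrivial translation must be constant (otherwise its leading behavior would differ on the two sides).

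For existence, I would reduce to the standard forward difference by the substitution $g(z):=f(z+\eps-1)$, which turns the equation $\nabla_\eps f(z)=p(z)$ into $g(z+1)-g(z)=p(z)$. By the Bernoulli recursion $B_{k+1}(z+1)-B_{k+1}(z)=(k+1)z^k$ recalled just before the lemma, for each monomial $z^k$ the polynomial $\tfrac{1}{k+1}B_{k+1}(z)$ solves the forward difference equation. By linearity in $p$, writing $p(z)=\sum_k p_k z^k$, the polynomial
\[
g(z) := \sum_k \frac{p_k}{k+1} B_{k+1}(z)
\]
satisfies $g(z+1)-g(z)=p(z)$, and then $f(z):=g(z-\eps+1)$ solves $\nabla_\eps f = p$.

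Finally, combining surjectivity with the one-dimensional kernel gives the statement: any two solutions differ by an element of $\ker \nabla_\eps$, which consists exactly of the constants, so $f$ is determined up to its constant term. There is no real obstacle here; the only thing to watch is that the substitution $g(z)=f(z+\eps-1)$ is well-defined for arbitrary $\eps\in\CC$ since we are working in $\FF[z]$ with $\FF$ of characteristic $0$, so shifts by any scalar act as linear automorphisms of the polynomial ring.
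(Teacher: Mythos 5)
Your proof is correct and follows essentially the same route as the paper: both reduce $\nabla_\eps$ to the forward difference $\nabla_1$ by the shift $z\mapsto z+\eps-1$, solve with $\sum_k \tfrac{p_k}{k+1}B_{k+1}$, and establish uniqueness by observing that a polynomial in the kernel of $\nabla_\eps$ is translation-invariant and hence constant. The degree-lowering remark at the start is a nice framing but is not logically needed beyond the kernel argument you actually give.
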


\begin{proof}
To construct $f$, we write $p(z)=\sum_{i\geq0} p_i z^i$. Then
\[
 \nabla_\eps f(z)=p(z)
 \quad\Leftrightarrow\quad
 \nabla_1 f(z)=p(z+1-\eps)=\sum_{i\geq0} \frac{p_{i}}{i+1} (i+1) (z+1-\eps)^{i}
\ ,
\]
hence $f(z):=\sum_{i\geq0} \frac{p_i}{i+1} B_{i+1}(z+1-\eps)+f_0$ satisfies this recurrence relation for any scalar $f_0$.

For uniqueness, let $g$ be another polynomial satisfying the same recurrence relation. Then $f_d=f-g$ is a polynomial satisfying $\nabla_\eps f_d(z)=0$. Hence $f_d$ attains the same value at, say, all integers, so it has to be a constant polynomial.
\end{proof}

\begin{lem} \label{lem-polynomials} For a fixed polynomial $p$, let $f$ be a polynomial satisfying $\nabla_{1/2} f(z)=p(z)$. Then
\begin{equation}
p(z)\omega = f(z+\omega) + \tfrac12 p(z) - f(z+\tfrac12)
\qquad
\mbox{in }\CC[z,\omega]\mod(\omega^2-\tfrac14)
  \cdot
\end{equation}
\end{lem}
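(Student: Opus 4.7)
The plan is to exploit that the factor ring $\CC[z,\gamma]/(\gamma^2 - \tfrac14)$ splits. Since the defining polynomial factors as $(\gamma - \tfrac12)(\gamma + \tfrac12)$ with coprime factors over $\CC$, the Chinese Remainder Theorem gives an isomorphism
\[
\CC[z,\gamma]/(\gamma^2 - \tfrac14) \;\xrightarrow{\sim}\; \CC[z] \times \CC[z], \qquad F(z,\gamma) \mapsto (F(z,\tfrac12),\, F(z,-\tfrac12)).
\]
Hence an equality of polynomials in $\CC[z,\gamma]$ modulo $(\gamma^2 - \tfrac14)$ is equivalent to the two equalities obtained by substituting $\gamma = \tfrac12$ and $\gamma = -\tfrac12$ respectively. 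So my whole proof reduces to checking these two scalar identities.

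First, I would substitute $\gamma = \tfrac12$ into the claimed equation. The right-hand side collapses immediately, since $f(z+\gamma) = f(z+\tfrac12)$ cancels with $-f(z+\tfrac12)$, leaving $\tfrac12 p(z)$, which is exactly $p(z)\gamma$ at $\gamma = \tfrac12$. Next, I would substitute $\gamma = -\tfrac12$. The right-hand side becomes
\[
f(z-\tfrac12) + \tfrac12 p(z) - f(z+\tfrac12) = \tfrac12 p(z) - \bigl(f(z+\tfrac12) - f(z-\tfrac12)\bigr) = \tfrac12 p(z) - \nabla_{1/2} f(z) = \tfrac12 p(z) - p(z) = -\tfrac12 p(z),
\]
which is precisely $p(z)\gamma$ at $\gamma = -\tfrac12$. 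This uses nothing beyond the defining relation $\nabla_{1/2} f = p$ from the hypothesis.

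There is no real obstacle; the only point of care is matching the sign convention for $\nabla_{1/2}$ from the preceding definition, namely $\nabla_{1/2} f(z) = f(z+\tfrac12) - f(z-\tfrac12)$. As an alternative, essentially equivalent approach, one could split the Taylor expansion $f(z+\gamma) = \sum_k f^{(k)}(z)\gamma^k/k!$ into even and odd powers of $\gamma$, reduce $\gamma^{2k}$ to $(\tfrac14)^k$ and $\gamma^{2k+1}$ to $(\tfrac14)^k\gamma$, and recognize the resulting sums as $\tfrac12(f(z+\tfrac12)+f(z-\tfrac12))$ and $\gamma \cdot (f(z+\tfrac12) - f(z-\tfrac12)) = \gamma\, p(z)$; this yields $f(z+\gamma) \equiv \tfrac12(f(z+\tfrac12)+f(z-\tfrac12)) + \gamma\, p(z)$ modulo $(\gamma^2 - \tfrac14)$, from which the claim follows by rearrangement. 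Either route suffices.
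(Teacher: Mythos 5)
Your proof is correct, and it takes a genuinely different and more economical route than the paper's. The paper proceeds constructively: it first proves by induction on $\deg p$ that \emph{some} polynomials $f,q$ exist with $p(z)\gamma\equiv f(z+\gamma)+q(z)$, then substitutes $\gamma=\pm\tfrac12$ to pin down $q$ and to deduce that the constructed $f$ satisfies $\nabla_{1/2}f=p$; the match with the $f$ in the hypothesis is then implicit, resting on the uniqueness up to a constant in the preceding lemma and the invariance of $f(z+\gamma)-f(z+\tfrac12)$ under constant shifts of $f$. You instead invert the direction of the argument: you observe that $\CC[z,\gamma]/(\gamma^2-\tfrac14)\cong\CC[z]\times\CC[z]$ by the Chinese Remainder Theorem (the factors $\gamma\mp\tfrac12$ are comaximal, as their difference is a unit), so an identity in the quotient is equivalent to the two scalar identities obtained by evaluating at $\gamma=\pm\tfrac12$, and both of these follow immediately from $\nabla_{1/2}f=p$. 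This skips the induction entirely and makes the logic one-directional and watertight, while the paper's version, though giving an explicit construction of $f$, is longer and quietly uses the indeterminacy-up-to-constant to align with the hypothesis. Your alternative route via the even/odd split of the Taylor expansion is also valid and amounts to the same ring-splitting in disguise.
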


\begin{proof} We claim that for every polynomial $p$, there are polynomials $f,q$ such that
\begin{equation} \label{eq-lemma-pq}
 p(z)\omega=f(z+\omega)+q(z)
\qquad
\mbox{in }\CC[z,\omega]\mod(\omega^2-\tfrac14)
\ .
\end{equation}
First we note that it is enough to show this for polynomials $p$ of the form $p(z)=(k+1)z^k$, because those form a basis. Consider $k=0$. Then $p(z)\omega=\omega=(z+\omega)-z$, which verifies the claim. Assume the claim is true for all non-negative integers $0\leq k<K$ for some $K\geq 1$, and hence for all polynomials $p$ of degree at most $K-1$. We consider $p(z)=(K+1)z^K$ and $f(z)=z^{K+1}$, then
\[ p(z) \omega = f(z+\omega) + p'(z)\omega + q(z)
\]
for polynomials $p',q$ with $\deg p'\leq K-1$, because $\omega^2\equiv\tfrac14$ and the coefficients of $z^K \omega$ equal $(K+1)$ on both sides. This proves the claim by induction.

We assume now $f,q$ are as in \eqref{eq-lemma-pq}. Then we can substitute $\omega=\pm\tfrac12$ to get
\begin{equation}\label{eq-q-p-f}
q(z) = \pm\tfrac12 p(z) - f(z\pm\tfrac12)
\ .
\end{equation}
However, the two choices of substitution should yield the same result, so
\[ \tfrac12 p(z)-f(z+\tfrac12)=-\tfrac12 p(z)-f(z-\tfrac12)
\qquad\Leftrightarrow\qquad
f(z+\tfrac12)-f(z-\tfrac12)=p(z)
\ .
\]
Choosing the positive sign in \eqref{eq-q-p-f} together with \eqref{eq-lemma-pq} yields
\[ p(z)\omega
 = f(z+\omega) + \tfrac12 p(z)-f(z+\tfrac12)
 \ ,
\]
as desired.
\end{proof}

\begin{lem} \label{lem-gamma-square} Let $v$ be a vector in $\h$ with $|v|=1$, and let $\vvbar$ be the corresponding rank-one matrix in $\gl_n$. Then $\gamma(\vvbar)^2=\tfrac14$ in $C(V)$.
\end{lem}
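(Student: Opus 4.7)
The plan is to reduce the computation of $\gamma(\vvbar)^2$ to a short calculation in the subalgebra of $C$ generated by two Clifford generators determined by $v$. Write $v=\sum_i v_i y_i$ and set $\tilde v := \sum_i v_i y_i \in\hfrak\subset V$ and $\tilde{\vbar} := \sum_j \overline{v_j} x_j \in \hfrak^*\subset V$. Then the rank-one endomorphism $\vvbar$ in $\gl_n$ equals $\sum_{i,j} v_i\overline{v_j}\, E_{ij}$, and by the explicit formula for $\gamma$ on elementary matrices from \ref{gamma-Eij},
\[
\gamma(\vvbar) \;=\; \tfrac14 \sum_{i,j} v_i\overline{v_j}\,(y_i x_j - x_j y_i)
 \;=\; \tfrac14\bigl(\tilde v\,\tilde{\vbar} - \tilde{\vbar}\,\tilde v\bigr) \;\in\; C.
\]

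Next I would record the Clifford relations between $\tilde v$ and $\tilde{\vbar}$ using the bilinear form $\tri$ on $V$ from \ref{defn-tri-v}. Since $\tilde v\in\hfrak$ and $\tilde{\vbar}\in\hfrak^*$ are each isotropic for $\tri$, we have $\tilde v^2=0$ and $\tilde{\vbar}^2=0$ in $C$, while $\tri[\tilde v,\tilde{\vbar}]=(\tilde{\vbar},\tilde v)=\sum_i\overline{v_i}v_i=|v|^2=1$ yields $\tilde v\,\tilde{\vbar}+\tilde{\vbar}\,\tilde v=2$. (This is precisely where the normalization $|v|=1$ enters.)

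Finally, expand the square: the two ``mixed'' terms $\tilde v\tilde{\vbar}\tilde{\vbar}\tilde v$ and $\tilde{\vbar}\tilde v\tilde v\tilde{\vbar}$ vanish by isotropy, leaving
\[
(\tilde v\tilde{\vbar}-\tilde{\vbar}\tilde v)^2
 \;=\; \tilde v(\tilde{\vbar}\tilde v)\tilde{\vbar} + \tilde{\vbar}(\tilde v\tilde{\vbar})\tilde v
 \;=\; \tilde v(2-\tilde v\tilde{\vbar})\tilde{\vbar} + \tilde{\vbar}(2-\tilde{\vbar}\tilde v)\tilde v
 \;=\; 2(\tilde v\tilde{\vbar}+\tilde{\vbar}\tilde v) \;=\; 4,
\]
so $\gamma(\vvbar)^2 = \tfrac{1}{16}\cdot 4 = \tfrac14$, as claimed.

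There is no real obstacle here; the only thing to be careful about is correctly translating the rank-one matrix $\vvbar\in\gl_n$ into Clifford data via \ref{gamma-Eij} and keeping track of the two sign conventions ($y_iy_j+y_jy_i=0$ versus $y_ix_j+x_jy_i=2\delta_{ij}$) coming from the form $\tri$ on $V=\hfrak\oplus\hfrak^*$.
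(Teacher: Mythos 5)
Your proof is correct and follows the same route as the paper: express $\gamma(\vvbar)$ as $\tfrac14(vv^*-v^*v)$ for a suitable pair $v\in\hfrak$, $v^*\in\hfrak^*$ of Clifford generators, then square using the isotropy relations $v^2=(v^*)^2=0$ and $vv^*+v^*v=2\tri[v,v^*]=2$. Your version is a little more careful than the printed one in tracking which vector lies in $\hfrak$ versus $\hfrak^*$ (the paper's proof has a couple of slips writing $v=\sum_i v_i x_i$ and $[x_i,y_j]$ where, with its conventions $x_i\in\hfrak^*$, $y_i\in\hfrak$ and $\gamma(E_{ij})=\tfrac14[y_i,x_j]$, it should have $v=\sum_i v_i y_i$ and $[y_i,x_j]$), but the substance is identical.
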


\begin{proof} We write $v=\sum_i v_i y_i$, then by linearity of $\gamma$,
\[ \gamma(\vvbar)
 =\sum_{i,j} v_i \vbar_j \gamma(E_{ij})
 =\tfrac14 \sum_{i,j} v_i \vbar_j [y_i,x_j]
 =\tfrac14 \left[\sum_i v_i y_i,\sum_i \vbar_i x_i\right]
 = \tfrac14[v,v^*]
\ ,
\]
where $v$ and $v^*:=\sum_i \vbar_i x_i$ can be regarded as elements of $V$ or of $C(V)$, and where we used the value of $\gamma(E_{ij})$ as discussed in \Cref{gamma-Eij}.

Now in $C(V)$, $v^2=\tri[v,v]=0$, $(v^*)^2=\tri[v^*,v^*]=0$ and
$vv^*+v^*v= 2 \tri[v,v^*]= 2$. Hence,
\begin{align*}
\gamma(\vvbar)^2
 &= \tfrac{1}{16}(vv^*vv^* + v^*vv^*v-v(v^*)^2v-v^*v^2v^*)
 = \tfrac{1}{16}(v(2-vv^*)v^* + v^*(2-v^*v)v) \\
 &= \tfrac{1}{8}(vv^*+v^*v)
 = \tfrac14
 \ ,
\end{align*}
as desired.
\end{proof}

We are ready to give a refined formula for $D^2$.

\begin{defn}
\label{defn-polynomial-p}\label{def-nu}
Let $f_\xi(z)$ be the polynomial uniquely defined by $f_\xi(0)=0$ and
\[
\nabla_0 f_\xi(z) = f_\xi(z) - f_\xi(z-1)
 = \tixi(z)
 = \tfrac{1}{2\pi^n} \partial^n(z^n \xi(z))
\]
(the first and the last equality being the definitions of $\nabla_0$ and $\tixi$, respectively).
Furthermore, we define $\alpha,\beta\in\U(\gl_n)$ by
\[ \alpha := \intv  -\tixi(\vvbar) + 2f_\xi(\vvbar) \dv
\ ,
\qquad
 \beta := \intv 2f_\xi(\vvbar-\tfrac12) \dv
\ ,
\]
and $t'_1=\tfrac12(\Omega+\alpha)$.
\end{defn}

\begin{rem} Let us compare this with objects studied in \cite{DT}: The polynomial $f_\xi$ corresponds to the polynomial called ``$2\pi^n f$'' there and the element $t'_1$ is the Casimir element studied and denoted by the same symbol in the reference. In \cite{Ti}, it is proved that the center of $\Hcal_\xi$ is freely generated by a total of $n$ (``higher Casimir'') elements (see \Cref{rem-center-Hxi} below). 
\end{rem}

In \cite{DT} it is in particular shown that $t'_1$ is central in $\Hcal_\xi$. We include a slightly different argument for this statement when proving the following formula of $D^2$ and the Parthasarathy condition for $A$.

\begin{prop} \label{D^2} \label{Hxi-Vogan}
Let $f:=f_\xi$, $\alpha,\beta$ as in the definition. Then we have the following formula for $D^2$:
\begin{equation} \label{eq-D^2}
D^2 = (\Omega+\alpha)\tensor1 - \DeltaC(\beta) = 2t'_1\tensor1 - \DeltaC(\beta)
\ .
\end{equation}

Furthermore, $t'_1$ is central in $\Hcal_\xi$ and $\beta$ is central in $H$. In particular, $D$ satisfies the Parthasarathy condition and $\Hcal_\xi$ is a Barbasch--Sahi algebra.
\end{prop}

\begin{proof} We fix $v\in\h$ with $|v|=1$ and define elements $z:=(\vvbar)\tensor1$, $\omega:=1\tensor\gamma(\vvbar)$ in $A\tensor C$. Then $z+\omega=\DeltaC(\vvbar)$ and $\omega^2=\tfrac14$ by \Cref{lem-gamma-square}. We observe that $\nabla_{1/2} f(z-\tfrac12)=\nabla_0 f(z)=\tixi(z)$ by the definition of $f=f_\xi$. So we can apply \Cref{lem-polynomials} to obtain
\[ \tixi(\vvbar)\tensor\gamma(\vvbar)
 = f(\DeltaC(\vvbar)-\tfrac12)
  + (\tfrac12 \tixi(\vvbar) - f(\vvbar))\tensor 1
\ ,
\]
which yields the new formula for $D^2$ when substituted into \eqref{formula-D2-integral}.

We define the shorthand $M_v:=\vvbar\in\gl_n$ for any $v\in\h$ to show now that $\Omega+\alpha$ is central in $A$. First we note that
\begin{align*}
\Omega+\alpha
 &= \sum_i (x_i y_i + y_i x_i)+\alpha
 = \sum_i (2x_iy_i + [y_i,x_i])+\alpha
 \\
 &= \sum_i (2x_iy_i + \intv (x_i,M_v \cdot y_i) \tixi(M_v)\dv)+\alpha
 = 2\sum_i x_iy_i + 2\intv f(M_v)\dv
\ ,
\end{align*}
where we use that $\sum_i (x_i,(\vvbar) \cdot y_i) = \sum_i |v_i|^2 = 1$.
Also, when verifying centrality, it suffices to consider a set of algebra generators, say, the elements of $\h,\h^*$ and $\gl_n$, respectively.

So let us fix $y,v\in\h$ such that $|v|=1$ and $M:=M_v$. We regard $M$ as an element in a universal enveloping algebra, so $M^k$ denotes a tensor power of $M$ for all $k\geq 0$. If $\mu:\gl_n^{\tensor k}\to \gl_n$ is the matrix multiplication, we have $\mu(M^k)=M$ for all $k\geq 1$, so we can compute in $A=T(V)\rtimes\U(\gl_n)$:
\[ M^k y
 = \sum_{i=0}^k \binom{k}{i} (\mu(M^{k-i}) \cdot y)M^i
 = yM^k+\sum_{i=0}^{k-1} \binom{k}{i} (M \cdot y)M^i
 = yM^k+(M \cdot y)(M+1)^k-(M \cdot y)M^k
 \ ,
\]
because $M$ is a primitive element, so the coproduct of $M^k$ is just $\sum_{i=0}^k \binom{k}{i} M^{k-i}\tensor M^i$. Hence,
\[ [M^k,y]=(M \cdot y)((M+1)^k-M^k)
\]
for all $k\geq 0$ and hence for any polynomial $q$,
\[ [q(M),y]=(M \cdot y) \nabla_1 q(M)
\ .
\]
In particular,
\[
 \left[\intv f(M_v)\dv,y\right]
  = \intv (M_v \cdot y) \nabla_1 f(M_v)\dv
  = \intv (M_v \cdot y) \tixi(M_v+1) \dv
  \ .
\]
On the other hand,
\begin{align*}
\left[\sum_i x_i y_i,y\right]
 &= \sum_i [x_i,y] y_i
 = -\intv \sum_i (x_i,M_v \cdot y) \tixi(M_v) y_i \dv \\
 &= -\intv [\tixi(M_v),M_v \cdot y] + (M_v \cdot y)\tixi(M_v) \dv
 = -\intv (M_v \cdot y) \tixi(M_v+1) \dv
 \ ,
\end{align*}
where we have used that $M_v \cdot (M_v \cdot y)=M_v \cdot y$. So indeed, $\Omega+\alpha$ commutes with any $y\in\h$. A parallel argument shows that $\Omega+\alpha$ commutes with any $x\in\h^*$. (Alternatively, this follows from the existence of an anti-involution of $\Hcal_\xi$ sending $y_i\leftrightarrow x_i$ and $E_{ij}\leftrightarrow E_{ji}$ as described in \cite[Sec.~2]{DT}.)

Furthermore, we have seen already that $\Omega$ commutes with elements from $\gl_n$, so it remains to show that $\alpha$ and $\beta$ are central in $\U(\gl_n)$, too. Let $q$ be any polynomial and consider the element $h_q=\intv q(M_v)\dv$ in $\U(\gl_n)$. We note that $h_q$ is invariant under the adjoint action of $U(\h)\subset\GL(\h)$, the unitary group of $\h$ with respect to $\tri_H$, because $QM_vQ^*=M_{Qv}$ for all $Q\in U(\h), v\in\h$ and the integral is invariant under the transformation $v\mapsto Qv$. Now $\gl_n$ is just the complexified Lie algebra of $U(\h)$, so the center of $\U(\gl_n)$ is just the space of $U(\h)$-invariants in $\U(\gl_n)$. Hence $h_q$ is central in $\U(\gl_n)$, and in particular, $\alpha$ and $\beta$ are central in $H=\U(\gl_n)$.

Now $D$ satisfies the Parthasarathy condition, because $(\Omega+\alpha)\tensor 1$ is central in $\AC$ and $\beta$ is in $H=H\ieven$ (see \Cref{sec-Dirac}).
\end{proof}

\begin{cor}\label{cor-zeta} There is an algebra map $\zeta\colon Z(\Hcal_\xi)\to Z(\U(\gl_n))$ relating the central characters and Dirac cohomology for $\Hcal_\xi$-modules, in the following sense: For any $\Hcal_\xi$-module $M$ with non-zero Dirac cohomology $H^D(M)$ and with a central character, the central character equals $\phi\circ\zeta$ for any (non-zero) irreducible $\gl_n$-submodule $(\phi,N)$ of $H^D(M)$.
\end{cor}

\begin{proof} This is \cite[Thm.~4.3]{Fl} in the special case considered here.
\end{proof}


\subsection{Dirac cohomology for \texorpdfstring{$\Hcal_\xi$}{H\_xi}} Having seen that $\Hcal_\xi$ is what we call a Barbasch--Sahi algebra, we can explore the Dirac cohomology of its modules. Here we will focus on finite-dimensional irreducible modules, which have been studied in \cite{DT}.

In the following, we identify $\l=(\l_1,\dots,\l_n)\in\CC^n$ with the $\gl_n$-weight $\l_1 E_{11}^*+\dots+\l_n E_{nn}^*$. We denote the set of dominant integral $\gl_n$-weights by $\L^+$, that is, $\l\in\L^+$ if and only if $(\l_i-\l_{i+1})$ is a non-negative integer for all $1\leq i<n$. For $\l\in\L^+$, let $V_\l$ be the finite-dimensional irreducible highest weight $\gl_n$-module with highest weight $\l$.

Let $S$ be the spin module of the Clifford algebra $C=C(V)$ (which is unique, since $\dim V$ is even). Then $S$ is a $\gl_n$-module via the algebra map $\gamma:\U(\gl_n)\to C$ which is defined by $\gamma(E_{ij})=\tfrac14(y_ix_j-x_jy_i)$ for all elementary matrices $E_{ij}\in\gl_n$ (see \Cref{gamma-Eij}).  We have the following information on the structure of $S$ as a $\gl_n$-module via $\gamma$ (see \cite[Prop.~3.17]{Ko-multiplets}):

\begin{lem} \label{lem-weights-S} The weights of $S$ are exactly the weights $(s_1,\dots,s_n)$ in $\{\pm\tfrac12\}^n$, and all weight spaces are one-dimensional. Hence $S\cong \Lambda(\h)\tensor(-\tfrac12\Tr)$ as $\gl_n$-modules.
\end{lem}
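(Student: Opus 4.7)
The plan is to realize the spin module $S$ concretely by using the polarization $V = \hfrak \oplus \hfrak^*$. Since $\hfrak$ and $\hfrak^*$ are both totally isotropic maximal subspaces of $V$ with respect to the form $\tri$ defined in \ref{defn-tri-v}, I would set $S := \bigwedge \hfrak$ and define an action of $V$ on $S$ by letting $y \in \hfrak$ act by left exterior multiplication $L_y$ and $x \in \hfrak^*$ act by $2\iota_x$, where $\iota_x$ is the unique odd derivation of $\bigwedge\hfrak$ extending the natural pairing $(\cdot,\cdot):\hfrak^*\tensor\hfrak\to\CC$. The relations $L_{y_i}L_{y_j}+L_{y_j}L_{y_i}=0$ and $\iota_{x_i}\iota_{x_j}+\iota_{x_j}\iota_{x_i}=0$ are immediate, while the mixed relation reduces to $L_{y_i}\iota_{x_j}+\iota_{x_j}L_{y_i}=\delta_{ij}$; together with the factor of $2$ this produces exactly $2\delta_{ij}=2\tri[y_i,x_j]$, matching the Clifford convention $vw+wv=2\tri[v,w]$. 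Since $\dim\bigwedge\hfrak = 2^n$ agrees with the dimension of the unique irreducible spin module, $\bigwedge\hfrak \cong S$ as $C(V)$-modules.

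Next I would compute the $\gl_n$-action on $S$ through $\gamma$. Starting from $\gamma(E_{ij}) = \tfrac14(y_i x_j - x_j y_i)$ (see \ref{gamma-Eij}) and substituting the prescribed operators, a one-line calculation using the mixed anticommutation above gives
\[
\gamma(E_{ij})\cdot s
 = \tfrac{1}{2}\bigl(L_{y_i}\iota_{x_j} - \iota_{x_j}L_{y_i}\bigr) s
 = L_{y_i}\iota_{x_j}s - \tfrac{1}{2}\delta_{ij}\, s
\ .
\]
Recognizing the first summand as the natural action of $E_{ij}\in\gl_n$ on the exterior algebra $\bigwedge\hfrak$ and the second as the action of the character $-\tfrac12\Tr$, this yields the claimed $\gl_n$-isomorphism $S\cong\bigwedge(\hfrak)\tensor(-\tfrac12\Tr)$.

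The weight description then follows at once: with respect to the natural $\gl_n$-action, the wedges $y_{i_1}\wedge\dots\wedge y_{i_k}$ indexed by subsets $I=\{i_1<\dots<i_k\}\subset\{1,\dots,n\}$ form a weight basis of $\bigwedge\hfrak$, each spanning a one-dimensional weight space whose weight is the indicator vector of $I$ in $\{0,1\}^n$; twisting by $-\tfrac12\Tr$ shifts every weight by $(-\tfrac12,\dots,-\tfrac12)$, producing exactly the $2^n$ distinct weights in $\{\pm\tfrac12\}^n$, each with multiplicity one. The only point requiring any care is tracking the factor of $2$ in the contraction action, which is dictated by the Clifford convention in use and is precisely what forces the shift to be $-\tfrac12\Tr$ rather than some other multiple of $\Tr$.
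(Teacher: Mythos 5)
Your proof is correct and follows essentially the same route as the paper: both exploit the polarization $V=\hfrak\oplus\hfrak^*$ to construct the spin module concretely and then read off the $\gl_n$-weights through $\gamma$. The only difference is cosmetic --- the paper realizes $S$ as the left ideal $C(V)\cdot y_1\cdots y_n$ inside $C(V)$ and computes just the Cartan action $\gamma(E_{ii})$ on the monomial basis, whereas you set up the equivalent wedge/contraction model on $\bigwedge\hfrak$ (correctly tracking the factor of $2$ forced by the convention $vw+wv=2\tri[v,w]$) and verify the full $\gamma(E_{ij})$ action, which identifies the module as $\bigwedge(\hfrak)\otimes(-\tfrac12\Tr)$ directly rather than by comparing weight multiplicities.
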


\begin{proof} We can take $S$ to be the left ideal generated by $u:=y_1\dots y_n$ in $C(V)$, which is irreducible (this is explained, for instance, in \cite[Sec.~3]{Ko-multiplets}). Hence, a basis of $S$ is given by the elements $x_1^{e_1}\dots x_n^{e_n} u$ for exponents $e_1,\dots,e_n\in \{0,1\}$.  We can compute directly
\begin{align*}
\gamma(E_{ii}) x_i
 &= \tfrac14(y_i x_i - x_i y_i) x_i
 = -\tfrac14 x_i y_i x_i
 = -\tfrac12 x_i
 \ ,
 \\
\gamma(E_{ii}) y_i
 &= \tfrac14(y_i x_i - x_i y_i) y_i
 = \tfrac14 y_i x_i y_i
 = \tfrac12 y_i
 \ ,
\end{align*}
and $\gamma(E_{ii})$ commutes with $x_j$ or $y_j$ in $C(V)$ for all $j\neq i$, so
\[ \gamma(E_{ii}) x_1^{e_1}\dots x_n^{e_n} u
= \tfrac12 (-1)^{e_i} x_1^{e_1}\dots x_n^{e_n} u
\]
for all $1\leq i\leq n$. Similarly, we find that $E_{i,i+1} x_1\dots x_j u=0$  for all $1\leq i<n$ and $1\leq j\leq n$, i.e., $x_1\dots x_j u$ is a highest weight vector, which yields the desired description of $S$ as a $\gl_n$-module.
\end{proof}

From here we can go on to compute the action of $D^2$ and the Dirac cohomology for all finite-dimensional irreducible $\Hcal_\xi$-modules. These were classified in \cite{DT} and we now recall the classification.

\begin{defn}[$M(\l),L(\l)$] \label{def-verma} For any $\gl_n$-weight $\l$, let $M(\l)$ be the \emph{Verma module} of $\Hcal_\xi$ defined by 
\[ M(\l) = \Hcal_\xi/( \Hcal_\xi E_{ij} +\Hcal_\xi y_k + \Hcal_\xi(E_{kk}-\l_k))_{i<j,k}
\ ,
\]
where $E_{ij}\in\gl_n$ are the elementary matrices as before, and let $L(\l)$ be the unique irreducible quotient (which exists similar to the classical case).
\end{defn}

The following says that the Dirac cohomology determines $\l$ both for Verma modules and their irreducible quotients.

\begin{lem} \label{highest-HDM-weight} There is an irreducible $\gl_n$-submodule with highest weight $\l+(\tfrac12,\dots,\tfrac12)$ in the Dirac cohomology of both $M(\l)$ and $L(\l)$, and $\l+(\tfrac12,\dots,\tfrac12)$ is the highest $\gl_n$-weight occurring. 
\end{lem}

\begin{proof} \newcommand\nfrak{{\mathfrak{n}}}
Let $M$ be $M(\l)$ or $L(\l)$, let $m$ be the image of $1\in\Hcal_\xi$ in the factor space $M$ and define $u:=y_1\dots y_n\in S$ as above. Then $y_i m=y_i u=0$ for all $i$. Hence, the Dirac operator $D=\sum_i x_i\o y_i+y_i\o x_i$ acts as $0$ on $m\o u$ in $M\o S$.

On the other hand, we observe that since $M(\l)$ is isomorphic to (a quotient of) $S(\nfrak^-)\o S(\h^*)$ as a $\gl_n$-module, where $\nfrak^-$ is the span of $\{E_{ij}\}_{i>j}$, both $M$ and $S$ are direct sums of their $\gl_n$-weight spaces and both have a unique maximal weight $\l$ and $(\tfrac12,\dots,\tfrac12)$, respectively. But $x_i$ lowers the $\gl_n$-weight for each $i$, so the vector $m\o u$ of weight $\l+(\tfrac12,\dots,\tfrac12)$ cannot be in the image of the Dirac operator. 

Hence, the image of $m\o u$ in $H^D(M)$ generates a (non-zero) irreducible $\gl_n$-submodule of highest weight $\l+(\tfrac12,\dots,\tfrac12)$.
\end{proof}

\newcommand\C{\mathcal{C}}

\begin{defn}[$h_k,T_a,\nabla,w^p,\rho,\C(p,\mu)$]\label{def-P} ~
\begin{itemize}
\item For $k\geq 0$, let $h_k=h_k(z_1,\dots,z_n)$ be the \emph{complete homogeneous symmetric polynomial $h_k$} of degree $k$ in the variables $z_1,\dots,z_n$, that is, 
\[ h_k(z_1,\dots,z_n)
:= \sum_{l_1+\dots+l_n=k,l_i\geq0} z_1^{l_1}\dots z_n^{l_n}
\ .
\]
\item For $a\in\CC$, let $T_a$ be the translation operator for polynomials, i.e., $T_a p(z):=p(z+a)$ for any polynomial $p$. Let $\nabla:=\nabla_{1/2}$ (see \Cref{def-nabla-eps}), that is, $\nabla=T_{1/2}-T_{-1/2}$.
\item For any polynomial $p$, let $w^p$ be the polynomial uniquely defined by
\[ \nabla^{n-1} z^{n-1} w(z) = 2\pi^n p(z)
\ .
\]
\item Let $\rho:=(\tfrac{n-1}2,\tfrac{n-1}2-1,\dots,-\tfrac{n-1}2)\in\CC^n$ be the Weyl vector of $\gl_n$.
\item For any polynomial $p$ and any dominant integral $\gl_n$-weight $\mu$, let $\C(p,\mu)$ denote the scalar by which the central element $\intv p(v\o\vbar)\dv$ of $\U(\gl_n)$ (see \Cref{D^2} and its proof) acts on $V_\mu$, the finite-dimensional irreducible $\gl_n$-module with highest weight $\mu$. 
\end{itemize}
\end{defn}

\begin{prop} For any polynomial $p$, any $\mu\in\L^+$ and any $a\in\CC$,
\begin{equation}\label{eq-assertion-C}
\C(p,\mu) = \sum_{k\geq0} w^p_k h_k(\mu+\rho)
\quad\text{and}\quad 
\C(T_a p,\mu) = \C(p,\mu+(a,\dots,a))
\ .
\end{equation}
\end{prop}

\begin{proof} The first identity is proven in \cite[Sec.~3.2]{DT} using the Weyl character formula by taking a suitable limit. To derive the second identity from the first, let us note that
\[
\nabla^{n-1} z^{n-1} w^{T_a p}(z) 
= T_a(2\pi^n p(z))
= T_a \nabla^{n-1} z^{n-1} w^p(z)
\ .
\] 
Now $T_a$ commutes with $\nabla^{n-1}$, which annihilates polynomials of degree at most $n-2$. Hence, for any $k\geq 0$,
\begin{equation}\label{eq-proof-C-1}
 T_a \nabla^{n-1} z^{n-1} z^k
 = \nabla^{n-1} (z+a)^{k+n-1}
 = \nabla^{n-1} z^{n-1} \sum_{0\leq i\leq k} \binom{k+n-1}{i} a^i z^{k-i}
 \ .
\end{equation} 

We claim that similarly,
\begin{equation}\label{eq-proof-C-2}
 h_k(z_1+a,\dots,z_n+a)
 = \sum_{0\leq i\leq k} \binom{k+n-1}{i} a^i h_{k-i}(z_1,\dots,z_n)
\ .
\end{equation} 
If $n=1$, then the claim is clearly true. Assume it is true for $n-1$, then the expression on the left-hand side can be simplified to be
\begin{multline*}
\sum_{0\leq i\leq k} h_i(z_1+a,\dots,z_{n-1}+a) (z_n+a)^{k-i}
\\
= \sum_{i_1+i_2+i_3+i_4=k} \binom{i_1+i_2+n-2}{i_1} a^{i_1} h_{i_2}(z_1,\dots,z_{n-1}) \binom{i_3+i_4}{i_4} z_n^{i_3} a^{i_4}
\\
= \sum_{i_2+i_3+i=k}
\binom{k+n-1}{i}  a^i
h_{i_2}(z_1,\dots,z_{n-1}) z_n^{i_3}
= \sum_{0\leq i\leq k} \binom{k+n-1}{i} a^i h_{k-i}(z_1,\dots,z_n)
\ ,
\end{multline*}
where all summation indices are non-negative and where we used the identity 
\[ \sum_{i_1+i_4=i} \binom{n_1+i_1}{i_1}\binom{n_2+i_4}{i_4}=\binom{n_1+n_2+i+1}{i}
\]
for arbitrary integers $n_1,n_2$, a special case of the Rothe--Hagen identity (see \cite[Eq.~(3)]{Go}). This proves the claim by induction.

The identities \Cref{eq-proof-C-1} and \Cref{eq-proof-C-2} directly imply that for all weights $\mu$,
\[
 \sum_{k\geq0} w^{T_a p}_k h_k(\mu)
 = \sum_{k\geq0} \sum_{0\leq i\leq k} \binom{k+n-1}{i} a^i w^p_k h_{k-i}(\mu)
 = \sum_{k\geq0} w^p_k h_k(\mu_1+a,\dots,\mu_n+a)
 \ ,
\]
which is equivalent to the second identity in \Cref{eq-assertion-C}.
\end{proof}


\begin{defn}[$w,P$]
Let $w=w(z):=w^{f_\xi(z)}$ and let $P$ be the multivariate polynomial 
\[ P(\mu):=\C(f_\xi,\mu)=\sum_{m\geq 0} w_k h_k(\mu+\rho)
\ .
\]
\end{defn}
Note that eventually, $w$ and $P$ depend only on the deformation parameter $\xi$ of $\Hcal_\xi$.

\begin{defn}[$\tiLambda,\nu$] Furthermore, we define the set
\[ \tiLambda
 :=\{\l\in\L^+: \exists k\in\ZZ_{\geq0}: P(\l)=P(\l-(0,\dots,0,k+1))\}
 \ ,
\]
and for any $\l\in\tiLambda$, we define $\nu=\nu(\xi,\l)\in\ZZ_{\geq0}^n$ by letting $\nu_i$ be the minimal non-negative integer such that $\l':=\l-(0,\dots,0,\nu_i+1,0,\dots,0)$ is either not a dominant weight or $P(\l)=P(\l')$ for every $1\leq i\leq n$.
\end{defn}

The set $\tiLambda$ parametrizes the finite-dimensional irreducible $\Hcal_\xi$-modules, each of which can be thought of as a rectangular grid of irreducible $\gl_n$-modules:

\begin{prop}{{\cite[Thm.~3.2, Thm.~4.1]{DT}}} \label{dt-fd-modules} The finite-dimensional irreducible modules of $\Hcal_\xi$ are given (up to isomorphism) by the set $\{L(\l)\}_{\l\in\tiLambda}$. For each $\l\in\tiLambda$, the central (Casimir) element $t'_1$ acts on $L(\l)$ by the scalar $P(\l)$ and 
\[
 L(\l)=\bigoplus_{0\leq \nu'\leq \nu} V_{\l-\nu'}
\]
as $\gl_n$-modules, where $\nu=\nu(\xi,\l)$ is as defined above and $\nu'\in\ZZ_{\geq0}^n$ runs over all tuples satisfying $0\leq \nu'_i\leq \nu_i$ for all $1\leq i\leq n$.
\end{prop}

We can use this result to obtain the structure of $L(\l)\tensor S$: let us fix the deformation parameter $\xi$, the highest weight $\l\in\tiLambda$ and $\nu=\nu(\xi,\l)$ as above. 
\begin{defn}[$\theta_k(a), m_\mu$]\label{def-decomposition-LS} For any dominant integral $\gl_n$-weight $\mu$, we define 
\[ m_\mu:=\prod_{1\leq i\leq n} \theta_{\nu_i}(\l_i+\tfrac12-\mu_i)
\ ,\quad\textnormal{where }
\theta_k(a):=\begin{cases} 
0 & a\not\in\{0,\dots,k+1\} \\
1 & a\in\{0,k+1\} \\
2 & a\in\{1,\dots,k\}
\end{cases}
\quad\textnormal{for } a\in\CC,k\in\ZZ_{\geq0}
\ .
\]
\end{defn}

\begin{prop}\label{decomposition-LS} For each $\l\in\tiLambda$,
\[ L(\l)\tensor S = \bigoplus_\mu m_\mu V_\mu
\]
as $\gl_n$-modules, where the sum ranges over all dominant integral $\gl_n$-weights $\mu$; in particular, the weights occurring with non-zero $m_\mu$ are those satisfying
\[ \mu_i\in\{\l_i+\tfrac12,\l_i-\tfrac12,\dots,\l_i-\nu_i-\tfrac12\}
\tforall 1\leq i\leq n
\ .
\]
\end{prop}

\begin{proof} Let $\l'$ be a dominant integral $\gl_n$-weight and $V_{\l'}$ the corresponding irreducible highest weight $\gl_n$-module. Then by the Pieri rule, $V_{\l'}\tensor\Lambda(\h)$ decomposes as
\[
 \bigoplus \{ V_\mu : \mu\in\L^+,
 	\mu_i-\l'_i\in\{0,1\}
 	\ \forall 1\leq i\leq n
 \}
 \ .
\]
Now since $L(\l)=\bigoplus_{0\leq\nu'\leq\nu} V_{\l-\nu'}$ and $S=\Lambda(\h)\tensor(-\tfrac12\Tr)$, $L(\l)\tensor S$ decomposes as 
\begin{equation} \label{eq-decomposition-LS}
 \bigoplus_{0\leq \nu'\leq \nu}
 \bigoplus \{ V_\mu : \mu\in\L^+,
 	 \mu_i-(\l_i-\nu'_i)\in\{\pm\tfrac12\}
 	 \ \forall 1\leq i\leq n
 	\}
 \ .
\end{equation}
To determine the multiplicity of a weight $\mu$, we count the number of ways we can write $\mu_i$ as $\l_i-\nu'_i\pm\tfrac12$ for some $0\leq\nu'_i\leq\nu_i$, for each $i$. This number is just $\theta_{\nu_i}(\l_i+\tfrac12-\mu_i)$.
\end{proof}

\begin{prop} \label{kernel-D2} For each $\l\in\tiLambda$, the kernel of $D^2$ acting on $L(\l)\tensor S$ is the $\gl_n$-module 
\[ \bigoplus \{m_\mu V_\mu :  P(\l)
  =P(\mu-(\tfrac12,\dots,\tfrac12))\}
\ .
\]
\end{prop}

\begin{proof} We recall that 
\[ 
D^2 = 2 (\sum_i x_i y_i + \intv f_\xi(\vvbar)\dv)\o 1 - 2\DeltaC(\intv f_\xi(\vvbar-\tfrac12)\dv )
\]
according to \Cref{D^2}. The sum of the two terms in parentheses is just the central Casimir element $t'_1$ in $\Hcal_\xi$ which acts on $L(\l)$ by the scalar $\C(f_\xi,\l)=P(\l)$, because all $y_i$ act as $0$ on the irreducible $\gl_n$-submodule of $L(\l)$ with highest weight $\l$. Let us consider an irreducible $\gl_n$-submodule of $L(\l)\o S$ with highest weight $\mu$. Then $\tfrac12 D^2$ acts on it by the scalar
\[  P(\l) - \C(T_{-1/2} f_\xi, \mu)
 = P(\l) - \C(f_\xi, \mu - (\tfrac12,\dots,\tfrac12))
  = P(\l) - P(\mu - (\tfrac12,\dots,\tfrac12))
 \ .
\]
\end{proof}


\renewcommand{\ll}[1]{\lambda^{(#1)}}
\begin{defn}[$\ll{I}, \ll{i}, I^+$] For any $I\subset\{1,\dots,n\}$, we define $\ll{I}\in\CC^n$ by
\[ \ll{I}_j := \l_j + \tfrac12 - \begin{cases} \nu_j + 1 & j\in I \\ 0 & j\not\in I \end{cases}
\quad\text{for all } 1\leq j\leq n
\]
and for any $1\leq i\leq n$, $\ll{i}:=\ll{\{i\}}$.
\end{defn}

Note that $\ll{\varnothing}=\l+(\tfrac12,\dots,\tfrac12)$ and that according to \Cref{def-decomposition-LS} and \Cref{decomposition-LS}, the set $\L^+\cap\{\ll{I}\}_I$ consists exactly of those weights $\mu$, for which $m_\mu=1$, that is, for which $L(\l)\o S$ has a unique irreducible $\gl_n$-submodule of the respective highest weight. Let us denote the unique submodules by $V^{(I)}$ whenever $\ll{I}$ is dominant. These modules can be thought of as the extremal vertices of the rectangular grid formed by all $\gl_n$-submodules of $L(\l)\o S$.

\begin{lem}\label{lem-V0-Vi} $V^{(\varnothing)}$ and $V^{(i)}$ are contained in the kernel of $D^2$ whenever $\ll{i}$ is dominant.
\end{lem}

\begin{proof} Clearly
\[ P(\ll{\varnothing}-(\tfrac12,\dots,\tfrac12)) = P(\l)
\]
and 
\[ P(\ll{i}-(\tfrac12,\dots,\tfrac12) 
 = P(\l_1,\dots,\l_i - \nu_i - 1,\dots,\l_n)
 = P(\l)
 \ ,
\]
by the definition of $\nu$ (\Cref{def-nu}) if $\ll{i}$ is dominant.
\end{proof}

To say more, we need the following observation:

\begin{lem}\label{lem-h-rectangle} Let $h(z_1,z_2)$ denote a linear combination of complete homogeneous symmetric polynomials in two variables. Consider $d_1,d_2\geq0$, $a_1,a_2\in\CC$ satisfying $a_1-a_2\in\RR_{>0}$,
\[ r:= h(a_1,a_2) = h(a_1,a_2-d_2)\ ,\]
and one of the following conditions:
\begin{itemize}
    \item $a_1-d_1-a_2 = 0$
    \item $a_1-d_1-a_2\in\RR_{>0}$ and $h(a_1-d_1,a_2) = r$ .
\end{itemize}
Then $h(a_1-d_1,a_2-d_2) = r$.
\end{lem}

\begin{proof} First we note that we can express the complete homogeneous symmetric polynomial $h_k(z_1,z_2)$ of any degree $k\geq0$ as the quotient $(z_1^{k+1}-z_2^{k+1})/(z_1-z_2)$ as long as $z_1\neq z_2$. Hence, if we write $h=\sum_{k\geq0} s_k h_k$ with coefficients $(s_k)_k$ and if we define the polynomial $p(z):=\sum_{k\geq0} s_k z^{k+1}$, then $h(z_1,z_2) = (p(z_1)-p(z_2))/(z_1-z_2)$ as long as $z_1\neq z_2$.

Next, let us record that
\begin{equation}\label{eq-fractions}
    r = s_1/t_1 = s_2/t_2 \Rightarrow r = (s_1+s_2) / (t_1+t_2)
\end{equation}
for all $s_1,s_2,t_1,t_2\in\CC$ such that $t_1,t_2,$ and their sum are non-zero.

Now if $d_1=0$ or $d_2=0$, the statement is trivial, so may assume $d_1$ and $d_2$ are positive. Also we may assume $a_2<a_1=0$ for the proof, because the general statement follows from this special case using an argument shift and \Cref{eq-proof-C-2}. 

Thus we have
\[ r
 = p(a_2)/a_2
 = p(a_2-d_2)/(a_2-d_2)
 \ .
\] 
and $-d_1-a_2=0$ or $-d_1-a_2>0$ together with $(p(-d_1)-p(a_2))/(-d_1-a_2)=r$. Both imply
\[
r = p(-d_1)/(-d_1)
\ ,
\]
either directly or with \Cref{eq-fractions}. Another application of \Cref{eq-fractions} now yields
\[ r = (p(-d_1) - p(a_2-d_2)) / (-d_1 - (a_2-d_2)) = h(-d_1,a_2-d_2)
\ ,
\]
as desired.
\end{proof}

We have already observed that the set $\{V^{(I)}\}_I$ parametrizes the irreducible $\gl_n$-submodules of $L\o S$ which are located at the vertices of the rectangular grid formed by all its irreducible $\gl_n$-submodules. The following results mean that these vertices, and hence, the shape of the rectangular grid, are captured by the Dirac cohomology.

\begin{lem}\label{lem-multiplicity-one} Let $M$ be an $\Hcal_\xi$-module and assume $N$ is an irreducible $\gl_n$-submodule which appears with odd multiplicity (e.g., multiplicity one) in $\ker D^2\subset M\o S$. Then $N$ appears in $H^D(M)$.
\end{lem}

\begin{proof} The action of $D$ is a $\gl_n$-module map by \Cref{lem-D-Delta-commute}, so $D$ acts on the multiplicity space of $N$ such that its square is zero. If the multiplicity space has an odd dimension, this action has a non-trivial cohomology. 
\end{proof}

\begin{prop}\label{V^I-in-H^D} $V^{(I)}$ is contained in the Dirac cohomology $H^D(L(\l))$ for each $I\subset\{1,\dots,n\}$ if $\ll{I}$ is dominant.
\end{prop}

\begin{proof} By \Cref{lem-multiplicity-one} it is enough to show that all $V^{(I)}$ are contained in the kernel of $D^2$, because these $\gl_n$-submodules have multiplicity one in $L(\l)\o S$.

We have seen in \Cref{lem-V0-Vi} that $V^{(I)}$ lies in the kernel of $D^2$ if $|I|\leq 1$, and we can proceed by induction in $|I|$. Assume $l:=|I|\geq 2$, $\ll{I}\in\L^+$ and assume all $V^{(I')}$ with $|I'|<l$ and $\ll{I'}\in\L^+$ lie in the kernel of $D^2$.

Pick $i<j$, the two smallest indices in $I$. Then as $\l$ and $\ll{I}$ are dominant integral, $\ll{I\setminus\{i\}}$ and $\ll{I\setminus\{i,j\}}$ are dominant and $\ll{I\setminus\{j\}}$ is dominant unless $j=i+1$ and (again due to the minimality condition on $\nu$)
\[ \lambda_i - \nu_i - 1 = \lambda_{i+1} - 1
\ .
\]

This means that
\[ P(\l) 
= P(\ll{I\setminus\{i\}}-(\tfrac12,\dots,\tfrac12))
= P(\ll{I\setminus\{i,j\}}-(\tfrac12,\dots,\tfrac12))
=: r
\ .
\]
Moreover, if $\ll{I\setminus\{j\}}$ is dominant, then 
\[
 P(\ll{I\setminus\{j\}}-(\tfrac12,\dots,\tfrac12)) = r
\ .
\]

Let us define $a_1:=\l_i+\rho_i$, $a_2:=\l_j+\rho_j$, $d_1:=\nu_i+1$, $d_2:=\nu_j+1$, and
\[
h(z_1,z_2):=\sum_k w_k h_k(\l_1+\rho_1,\dots,z_1,\dots,z_2,\dots,\l_n+\rho_n)
\ ,
\]
where $z_1$ and $z_2$ are the $i$-th and $j$-th argument, respectively. Then the properties of $P$ we have discussed above mean that $r=h(a_1,a_2)=h(a_1,a_2-d_2)$ and $h(a_1-d_1,a_2)=r$ if $\ll{I\setminus\{j\}}$ is dominant, or else $a_1-d_1=a_2$.

This means we can apply \Cref{lem-h-rectangle} to conclude that $h(a_1-d_1,a_2-d_2)=r$, that is, $P(\l)=P(\ll{I}-(\tfrac12,\dots,\tfrac12))$, i.e., $V^{(I)}$ lies in the kernel of $D^2$, which completes the induction step.
\end{proof}

\begin{cor}\label{cor-Dirac-determines-L} Any finite-dimensional irreducible representation of the infinitesimal Cherednik algebra $H_\xi$ is uniquely determined by its Dirac cohomology. The highest weight $\l$ and the dimensions $\nu$ of the rectangle formed by the highest $\gl_n$-weights can be read off from the Dirac cohomology.
\end{cor}

\begin{proof} $\ll{\varnothing}$ is the maximal weight occurring in the Dirac cohomology and $\l=\ll{\varnothing}-(\tfrac12,\dots,\tfrac12)$. 

For each $1\leq i\leq n$, either there is an irreducible $\gl_n$-submodule in the Dirac cohomology whose highest weight is of the form $\ll{\varnothing}-(0,\dots,k,0,\dots,0)$, where $k>0$ is the $i$-th coordinate, in which case $\nu_i=k-1$, or there is no such submodule, in which case $\nu_i$ is maximal among those non-negative integers $k$ for which $\l-(0,\dots,k,0,\dots,0)$ is dominant.
\end{proof}

Let us conclude our discussion with examples for $n=1$ and $n=2$ (cf.~the examples in \cite[Sec.~4]{DT}).

\begin{expl} 
For $n=1$, $\l$ is a complex number and $\nu$ is a non-negative integer (minimal) such that $P(\l)=P(\l-\nu-1)$. Then $L(\l)=V_\l \oplus\dots\oplus V_{\l-\nu}$. Hence,
\[
L(\l)\tensor S
 = V_{\l+\tfrac12} \oplus 2V_{\l-\tfrac12} \oplus \dots \oplus 2V_{\l-\nu+\tfrac12} \oplus V_{\l-\nu-\tfrac12}
 \ .
\]
Now the only weights $\mu$ occurring in $L(\l)\tensor S$ such that $P(\l)=P(\mu-\tfrac12)$ are obviously $\ll{\varnothing}=\l+\tfrac12$ and $\ll{1}=\l-\nu-\tfrac12$. So the kernel of $D^2$ and, by \Cref{V^I-in-H^D}, the Dirac cohomology is just $V^{(\varnothing)}\oplus V^{(1)}$.
\end{expl}

\begin{expl}\label{expl-n2}
For $n=2$, we identify weights with points in the plane and we consider the $\gl_n$-weight $\l=(2.5,0.5)$ together with a polynomial $P$ in two variables (whose existence we will establish shortly) satisfying
\[
P(\l) = P(\l-(0,3))
\text{ and }
P(\l-(k,0)) \neq P(\l)\neq P(\l-(0,k))
\text{ for }k=1,2
\ .
\]
Then $\nu_1=2$, because $\l-(3,0)=(-0.5,0.5)$ is not dominant and $\nu_2=2$, because $P(\l-(0,3))=P(\l)$ by assumption. Hence the irreducible $\gl_n$-submodules occurring in $L(\l)$ form a $3\times 3$-grid and their highest weights are those $\mu$ satisfying $\l\geq\mu\geq\l-(2,2)$. Each of these irreducible $\gl_n$-submodules produces up to $4$ irreducible $\gl_n$-submodules when tensored with the spin module $S$, for an irreducible $\gl_n$-module with highest weight $\mu$ they have highest $\gl_n$-weights $\mu+(\pm\tfrac12,\pm\tfrac12)$ (see \Cref{fig-n2}).

\begin{figure}
\centering
\raisebox{-.5\height}{\includegraphics[width=0.5\textwidth]{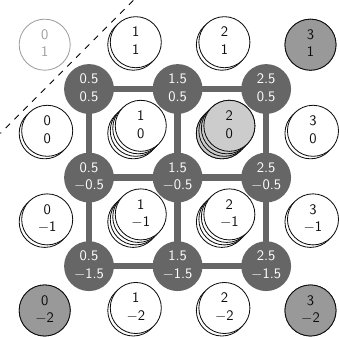}}
\qquad\qquad
\begin{tabular}{c||cccc|}
     $\nu'_2$ \textbackslash $\nu'_1$ & 3 & 2 & 1 & 0 \\   
     \hline\hline
     0 & 0 & 13 & 19 & \cellcolor{gray!30} 9 \\
     1 & -11 & 1 & \cellcolor{gray!30}9 & 4 \\
     2 & -11 & -3 & 4 & 1 \\
     3 & \cellcolor{gray!30}9 & 10 & 13 & \cellcolor{gray!30}9 \\ 
     \hline
\end{tabular}
\caption{\textit{left:} Weights of a finite-dimensional module $M$ (filled dark circles), of the tensor product $M\tensor S$ (all circles with dark outlines indicating multiplicities), and of the kernel of $D^2$ (shaded circles with dark outlines indicating multiplicities). According to \Cref{V^I-in-H^D}, the three multiplicity-free weights at the vertices in $\ker D^2$ appear in the Dirac cohomology, and as we discuss in \Cref{expl-n2} they, in fact, form the full Dirac cohomology (so the slightly lighter shaded circles labeled $(2,0)$ are in the kernel of $D^2$, but not in the Dirac cohomology). The weight in the top left corner is not dominant. \newline\textit{right:}  The values of the polynomial $P(\mu)=\tfrac{27}2 h_1(\mu+\rho)+ h_2(\mu+\rho)-\tfrac32 h_3(\mu+\rho)$ for $\mu=\l-\nu'$.
}\label{fig-n2}
\end{figure}

Now we specialize $P(\mu)=\tfrac{27}2 h_1(\mu+\rho)+h_2(\mu+\rho)-\tfrac32 h_3(\mu+\rho))$ with the complete homogeneous symmetric polynomials in two variables $h_1,h_2,h_3$. We can check (see the table in \Cref{fig-n2}) that $P$ satisfies the conditions mentioned so far and also
\[ P(\l)=P(\l-(3,3))=P(\l-(1,1)) \ .
\]
Note that $P(\l)=P(\l-(3,3))$ follows already from our previous assumptions by \Cref{lem-h-rectangle}.

Hence, the kernel of $D^2$ is the sum of the irreducible $\gl_n$-submodules of $L(\l)\o S$ with highest weights $(3,1)$, $(3,-2)$, $(2,0)$, or $(0,-2)$ with their multiplicities $1$ or $4$, respectively.

By \Cref{V^I-in-H^D}, the three modules with multiplicity one also occur in the Dirac cohomology. Let us determine the contribution of the remaining weight $(2,0)$ with multiplicity $4$ in $L(\l)\o S$ to the Dirac cohomology.

We view $L(\l)$ as a factor space of $\Hcal_\xi$ identifying elements of the latter space with their images under the quotient map and $S$ as the left ideal of $C$ generated by $u=y_1\dots y_n$, as before. It can then be verified that
\begin{multline*}
m_1 := 1\o x_1 x_2 u \ ,\quad m_2 := x_1\o x_2 u-x_2\o x_1 u\ ,\quad
\\
m_3 := x_2 (x_2 E_{21} + (\l_1-\l_2)x_1 ) \o u \ ,\quad
m_4 := (x_2 E_{21} + (\l_1-\l_2)x_1 ) \o x_2 u
\end{multline*}
are four linearly independent highest weight vectors of irreducible $\gl_n$-submodules with highest weight $(2,0)$ in $L(\l)\o S$. Moreover,
\begin{gather*} 
D m_1 = (x_1\o y_1+x_2\o y_2) m_1 = 2 m_2  \\
D m_4 = (y_1\o x_1+x_2\o y_2) m_4 = 2 m_3 + r m_1
\end{gather*}
for some $r\in\CC$. This means that $D$ has rank at least $2$ on the multiplicity space of $V_{(2,0)}$ with respect to $L(\l)\o S$, but as $D$ squares to $0$ on this space, the rank is exactly $2$, the action of $D$ decomposes into two Jordan blocks of eigenvalue $0$ and size $2$ each, and $D$ has no cohomology on this space.

Hence, the Dirac cohomology for this module equals the sum of those irreducible submodules in the kernel of $D^2$ which are multiplicity-free, which is the part of the Dirac cohomology described by \Cref{V^I-in-H^D}.
\end{expl}

The part of the Dirac cohomology described by \Cref{V^I-in-H^D} is already the full Dirac cohomology for all examples we computed.

\bigskip

Finally, let us conclude with a description of the map $\zeta$ from the center of $\Hcal_\xi$ to the center of $\U(\gl_n)$ which relates central characters according to \Cref{cor-zeta}.

Let $\beta_1,\dots,\beta_n$ be the standard generators of the center of $\U(\gl_n)$ which can be obtained as the coefficients of $\tau$ in the series expansion of the polynomial function $A\mapsto \det(A-\tau)$, using a suitable identification $S(\gl_n^*)\simeq\U(\gl_n)$, as explained in \cite[Sec.~2]{Ti}. It is shown in \cite{Ti} that the center of $\Hcal_\xi$ is freely generated by elements
\begin{equation} \label{rem-center-Hxi}
\eta_i := \sum_{1\leq k\leq n} [\beta_i,y_k]x_k - c_i
\end{equation}
for elements $c_i=c_i(\xi)\in Z(\U(\gl_n))$ for all $1\leq i\leq n$.

\begin{defn} Using Sweedler's notation $\Delta(u)=u_{(1)}\o u_{(2)}$ for elements $u\in\U(\gl_n)$, we define for $1\leq i\leq n$:
$$
b_i := \sum_k \beta_i[y_k,x_k] - [y_k,(\beta_i)_{(1)}\cdot x_k] (\beta_i)_{(2)} - c_i \in \U(\gl_n)
\ .
$$
\end{defn}

\begin{lem} $b_i$ is central in $\U(\gl_n)$.
\end{lem}

\begin{proof} We use that $\beta_i$ and $c_i$ are central in $\U(\gl_n)$ and $\sum_k y_kx_k$ is an $\gl_n$-invariant element, so for any $u\in\U(\gl_n)$,
\begin{align*}
u b_i
 &= \sum_k \beta_i[u_{(1)}\cdot y_k,u_{(2)}\cdot x_k] u_{(3)} - [u_{(1)}\cdot y_k,(u_{(2)}(\beta_i)_{(1)})\cdot x_k] u_{(3)} (\beta_i)_{(2)} S(u_{(4)}) u_{(5)}  - u c_i
 \\
 &= \sum_k \beta_i[y_k,x_k] u - [y_k,(\beta_i)_{(1)}\cdot x_k] (\beta_i)_{(2)} u  - c_i u
 = b_i u
 \ ,
\end{align*}
as desired.
\end{proof}

\newcommand\HC{\mathsf{HC}}
\newcommand\rhovec{(\tfrac12,\dots,\tfrac12)}
\newcommand\cT{\mathcal{T}}
Let $\HC\colon Z(\U(\gl_n))\to \CC[\lambda_1,\dots,\lambda_n]^{S_n}$ be the Harish-Chandra isomorphism between the center of $\U(\gl_n)$ and the symmetric polynomial functions in $n$ variables.
So for any $z\in Z(\U(\gl_n))$ and any integral dominant $\gl_n$-weight $\lambda$, $\HC(z)$ is a symmetric polynomial function in $n$ variables and $\HC(z)(\lambda)$ is its evaluation at $\lambda$, which is the scalar by which $z$ acts on $V_{\lambda-\rhovec}$, the finite-dimensional irreducible $\gl_n$-module with highest weight $\lambda-\rhovec$. 

We observe that for any symmetric polynomial function $h(\lambda_1,\dots,\lambda_n)$, there is another symmetric polynomial function $h(\lambda_1-\tfrac12,\dots,\lambda_n-\tfrac12)$. Let us denote by $\cT$ the map which is induced by this translation on $Z(\U(\gl_n))$ via $\HC$.

\begin{prop} \label{prop-explicit} Then $\zeta\colon Z(\Hcal_\xi)\to Z(\U(\gl_n))$ is the unique algebra map sending $\eta_i\mapsto \cT(b_i)$.
\end{prop}

\begin{proof} For all $u\in \U(\gl_n)$, $x\in\h^*$, $y\in\h$, we can compute
\begin{align*}
[u,y]x 
&= u y x - y u x 
= (u[y,x] + u x y ) - y (u_{(1)}\cdot x) u_{(2)}
\\
&= (u[y,x] + u x y) - [y,u_{(1)}\cdot x] u_{(2)} - (u_{(1)}\cdot x)y u_{(2)}
\equiv u[y,x] - [y,u_{(1)}\cdot x] u_{(2)}
\end{align*}
modulo $\Hcal_\xi \h$, the left ideal in $\Hcal_\xi$ generated by $\h$, where we use that $\h\U(\gl_n)=\U(\gl_n)\h$ in $\Hcal_\xi$. So 
$$ \eta_i\equiv b_i \qquad\text{ modulo } \Hcal_\xi \h
\ .
$$
Let us recall that for any deformation parameter $\xi$ and any dominant integral $\gl_n$-weight $\lambda$, there is a Verma module $M(\lambda)$ of $\Hcal_\xi$ (see \Cref{def-verma}).
Now $\eta_i$ acts on $M(\lambda)$ by $\HC(b_i)(\lambda+\rhovec)$, since $\lambda$ is the highest weight corresponding to an irreducible $\gl_n$-module in $M(\lambda)$, and any element from $\h$ acts as $0$ on it. 
On the other hand, we know from \Cref{highest-HDM-weight} that there is an irreducible $\gl_n$-submodule with highest weight $(\lambda+\rhovec)$ in $H^D(M(\l))$, on which $\zeta(\eta_i)$ has to act by the same scalar, so
$$
\HC(\zeta(\eta_i))(\lambda+2\rhovec) = \HC(b_i)(\lambda+\rhovec)
$$
which implies the asserted relation between $\eta_i$ and its image under $\zeta$.
\end{proof}



\bigskip


\begin{thebibliography}{[XYZ]}
\bibitem[AbS]{AbS}
  M.~Abramowitz, I.~Stegun.
  \textit{Handbook of mathematical functions: with formulas, graphs, and mathematical tables.}
  vol. 55, Courier Corporation, 1964.

\bibitem[AtS]{AtS}
  M.~Atiyah, I.~Singer.
  \textit{The index of elliptic operators on compact manifolds.}
  Bull. Amer. Math. Soc. 69 (1963): 422--433.
     
\bibitem[BCT]{BCT}
  D.~Barbasch, D.~Ciubotaru, P.~Trapa.
  \textit{Dirac cohomology for graded affine Hecke algebras.}
  Acta Math. 209.2 (2012): 197--227.

\bibitem[BG]{BG}
  A.~Braverman, D.~Gaitsgory.
  \textit{Poincar{\'e}-Birkhoff-Witt theorem for quadratic algebras of Koszul type.}
  J. Algebra 181.2 (1996): 315--328.

\bibitem[Ci]{Ci}
  D.~Ciubotaru.
  \textit{Dirac cohomology for symplectic reflection algebras.}
   Selecta Math. 22.1 (2016): 111--144.

\bibitem[Di]{Di}
  P.~Dirac.
  \textit{The quantum theory of the electron.}
  Proc. Roy. Soc. London Ser. A 117.778 (1928): 610--624.

\bibitem[Dr]{Dr}
   V.~Drinfeld.
   \textit{Degenerate affine Hecke algebras and Yangians.}
   Funktsional. Anal. i Prilozhen. 20.1 (1986), 69--70.
   
\bibitem[DT]{DT}
  F.~Ding, A.~Tsymbaliuk.
  \textit{Representations of infinitesimal Cherednik algebras.}
  Represent. Theory 17.19 (2013): 557--583.

\bibitem[EG]{EG}
  P.~Etingof, V.~Ginzburg.
  \textit{Symplectic reflection algebras, Calogero-Moser space, and deformed Harish-Chandra homomorphism.}
  Invent. Math. 147 (2002): 243--348.

\bibitem[EGG]{EGG}
  P.~Etingof, W.~L.~Gan, V.~Ginzburg.
  \textit{Continuous Hecke algebras.}
  Transform. Groups 10.3--4 (2005): 423--447.

\bibitem[EHW]{EHW}
  T.~Enright, R.~Howe, N.~Wallach.
  \textit{A classification of unitary highest weight modules.}
 Representation theory of reductive groups (Park City, Utah, 1982),
 Progr.~Math., vol.~40, p.~97--143,
 Birkh\"auser Boston, Boston, MA, 1983.

\bibitem[Fl]{Fl}
  J.~Flake.
  \textit{Barbasch--Sahi algebras and Dirac cohomology.}
  Trans. Amer. Math. Soc. 371.10 (2019): 6883--6902.
  
\bibitem[Go]{Go}
  H.~W.~Gould.
  \textit{Some Generalizations of Vandermonde's Convolution}.
  Amer. Math. Monthly 63.2 (1956): 84--91.

\bibitem[HP1]{HP}
  J.-S.~Huang, P.~Pand{\v{z}}i{\'c}.
  \textit{Dirac cohomology, unitary representations and a proof of a conjecture of Vogan.}
  J.~Amer.~Math.~Soc. 15.2 (2002): 185--202.

\bibitem[HP2]{HP-book}
  J.-S.~Huang, P.~Pand{\v{z}}i{\'c}.
  \textit{Dirac operators in representation theory.}
  Springer Science \& Business Media, 2007.

\bibitem[Ja]{Ja}
  H.~P.~Jakobsen.
  \textit{Hermitian symmetric spaces and their unitary highest weight modules.}
  J. Funct. Anal. 52.3 (1983): 385--412.

\bibitem[Kh1]{Kh-symp}
  A.~Khare.
  \textit{Category $\mathcal{O}$ over a deformation of the symplectic oscillator algebra.}
  J. Pure Appl. Algebra 195.2 (2005): 131--166.

\bibitem[Kh2]{Kh}
  A.~Khare,
  \textit{Generalized nil-Coxeter algebras, cocommutative algebras, and the PBW property.}
  AMS Contemporary Mathematics (Proceedings of International Conference for Passman), in press, 2016.

\bibitem[Ko1]{Ko-multiplets}
  B.~Kostant.
  \textit{A cubic Dirac operator and the emergence of Euler number multiplets of representations for equal rank subgroups.}
  Duke Math. Jour. 100 (1999): 447--501.
  
\bibitem[Ko2]{Ko-bbw}
  B.~Kostant.
  \textit{A generalization of the Bott–Borel–Weil theorem and Euler number multiplets of representations.}
  Lett. Math. Phys. 52.1 (2000): 61--78.

\bibitem[Ko3]{Ko-hdm}
  B.~Kostant.
  \textit{Dirac cohomology for the cubic Dirac operator.}
  Studies in memory of Issai Schur.
  Birkh\"auser, Boston, MA, 2003: 69--93.
 
\bibitem[LT]{LT}
  I.~Losev, A.~Tsymbaliuk.
  \textit{Infinitesimal Cherednik algebras as W-algebras.}
  Transform. Groups 19.2 (2014): 495--526.

\bibitem[Lu]{Lu}
  G.~Lusztig.
  \textit{Affine Hecke algebras and their graded version.}
  J. Amer. Math. Soc. 2.3 (1989): 599--635.

\bibitem[Me]{Me}
  E. Meinrenken.
  \textit{Clifford algebras and Lie theory.}
  Springer, Heidelberg, 2013.

\bibitem[Mo]{Mo}
  S.~Montgomery.
  \textit{Hopf algebras and their actions on rings, volume 82 of CBMS Regional Conference Series in Mathematics.}
  Published for the Conference Board of the Mathematical Sciences, Washington, DC, 1993.

\bibitem[Pa]{Pa}
  R.~Parthasarathy.
  \textit{Dirac operator and the discrete series.}
  Ann. of Math. 96 (1972), 1--30.

\bibitem[RS]{RS}
  A.~Ram, A.~Shepler.
  \textit{Classification of graded Hecke algebras for complex reflection groups.}
  Comment. Math. Helv. 78.2 (2003): 308--334.

\bibitem[SW1]{SW}
  A.~Shepler, S.~Witherspoon.
  \textit{Hochschild cohomology and graded Hecke algebras.}
  Trans. Amer. Math. Soc. 360.8 (2008): 3975--4005.

\bibitem[SW2]{SW-survey}
  A.~Shepler, S.~Witherspoon.
  \textit{Poincar{\'e}-Birkhoff-Witt Theorems}.
  Commutative Algebra and Noncommutative Algebraic Geometry, Volume I: Expository Articles (D. Eisenbud, S.B. Iyengar, A.K.
Singh, J.T. Stafford, and M. Van den Bergh, Eds.), Mathematical Sciences Research Institute Proceedings,
vol.~67, p.~259--290, Cambridge University Press, Cambridge, 2015.
  
\bibitem[Ti]{Ti}
  A.~Tikaradze.
  \textit{Center of infinitesimal Cherednik algebras of $\gl_n$.}
  Represent. Theory 14.1 (2010): 1--8.

\bibitem[TK]{TK}
  A.~Tikaradze, A.~Khare.
  \textit{Center and Representations of Infinitesimal Hecke Algebras of $\mathfrak{sl}_2$.}
  Comm. Algebra 38.2 (2010): 405--439.

\bibitem[Ts]{Ts}
  A.~Tsymbaliuk.
  \textit{Infinitesimal Hecke algebras of $\mathfrak{so}_N$.}
  J. Pure Appl. Algebra 219.6 (2015): 2046--2061.

\bibitem[Vo]{Vo}
  D.~Vogan.
  \textit{Representations of real reductive Lie groups.}
  Progress in Mathematics, vol.~15,
  Birkh\"auser Boston, Boston, MA, 1981.
      
\bibitem[VZ]{VZ}
  D.~Vogan, G.~Zuckerman.
 \textit{Unitary representations with nonzero cohomology.}
 Compositio Math.~53.1 (1984): 51--90.

\bibitem[WW]{WW}
  C.~Walton, S.~Witherspoon.
  \textit{Poincar{\'e}-Birkhoff-Witt deformations of smash product algebras from Hopf actions on Koszul algebras.}
  Algebra Number Theory 8.7 (2014): 1701--1731.
  
\end{thebibliography}
\end{document}